\documentclass[12pt]{amsart} 
\usepackage{amsmath,amsthm,amssymb,graphicx,eucal, amscd, ifthen} 
\usepackage[margin=1in]{geometry} 
\usepackage{hyperref}

\numberwithin{equation}{section} 
\numberwithin{figure}{section} 

 \newtheorem{theorem}{Theorem}[section]
 \newtheorem{definition}[theorem]{Definition}
 
 \newtheorem{conjecture}[theorem]{Conjecture}

 \newtheorem{proposition}[theorem]{Proposition}
 \newtheorem{lemma}[theorem]{Lemma}
 \newtheorem{corollary}[theorem]{Corollary}
 
 \newtheorem{remark}[theorem]{Remark}
 
 \newtheorem{example}[theorem]{Example}
 
\def\Sig{Sierpi\'nski gasket}
\def\Si{Sierpi\'nski}
\def\Sif{Sierpi\'nski fractafold}

\def\frSiff{finitely ramified Sierpi\'nski fractal field}
\def\iSif{infinite Sierpi\'nski fractafold}
\def\iSig{infinite Sierpi\'nski gasket}
\def\arti{and references therein}
\def\pcf{\mbox{p.c.f.}}
\def\iFF{if and only if }
\def\Lp{Laplacian}

 \def\pvz{\psi_{v,\lam}}
 \def\puz{\psi_{u,\lam}}
 \def\Pz{\Psi_\lam}
 \def\fR{\mathfrak R}
 \def\sR{\mathcal R}
 \def\fF{\mathfrak F}
 
 \def\sE{\mathcal E}
 \def\sJ{\mathcal J}
 
 \def\lam{\lambda}

\long\def\BEQ#1#2{\begin{equation}\label{#1}#2\end{equation}}
\long\def\BB#1#2#3{\begin{#1}\label{#2}#3\end{#1}}

\def\DS{\Delta_{SG}}
\def\D{\Delta}
\def\G{\Gamma}
\def\Go{{\Gamma_0}}

\def\Sum{\displaystyle\sum}

\begin{document}

   \author{Robert S. Strichartz}
   \address{Department of Mathematics\\
Cornell University\\
Ithaca, NY 14853}
   \email{str@math.cornell.edu}

   \author{Alexander Teplyaev}
   \address{Department of Mathematics\\
University of Connecticut\\
Storrs, CT 06269-3009
}
   \email{teplyaev@math.uconn.edu}
      \thanks{Research supported in part by the National Science Foundation, grants DMS-0652440 (first author) and DMS-0505622 (second author).}

   \title{Spectral analysis on infinite 
   Sierpinski  
   fractafolds}

   \begin{abstract}
A fractafold, a space that is locally modeled on a specified fractal, is the fractal equivalent of a manifold.  For compact fractafolds based on the \Si\ gasket, it was shown by the first author how to compute the discrete spectrum of the Laplacian in terms of the spectrum of a finite graph \Lp.  A similar problem was solved by the second author for the case of infinite blowups of a \Sig, where spectrum is pure point of infinite multiplicity.  Both works used the method of spectral decimations to obtain explicit description of the eigenvalues and eigenfunctions.
In this paper we combine the ideas from these earlier works  to obtain a description of the spectral resolution of the Laplacian for noncompact fractafolds. Our   main abstract results    enable us to obtain a completely explicit description of the spectral resolution of the fractafold Laplacian. For some specific examples we   turn the spectral resolution into a ``Plancherel formula''. We also present such a formula  for the graph Laplacian on the 3-regular tree, which appears to be a new result of independent interest.  
In the end we  discuss periodic fractafolds and fractal fields.
 \tableofcontents  \end{abstract}

   \subjclass{28A80, 31C25,  34B45,  60J45, 94C99}
   \keywords{Fractafolds, spectrum, eigenvalues,
eigenfunctions,  vibration modes,  finitely ramified fractals,    resistor network,  self-similar Dirichlet form,
Laplacian,  spectral decimation.}

   \date{\today}

   \maketitle
   
\section{Introduction}

Analysis on fractals has been developed based on the construction of Laplacians on certain basic fractals, such as the \Si\ gasket, the Vicsek set, the \Si\ carpet, etc., which may be regarded as generalizations of the unit interval, in that they are both compact and have nonempty boundary.  As is well-known in classical analysis, it is often more interesting and sometimes simpler to deal with spaces like the circle and the line, which have no boundary, and need not be compact.  The theory of analysis on manifolds is the natural context for such investigations.  The notion of \emph{fractafold}, introduced in \cite{St03}, is simply the fractal equivalent: a space that is locally modeled on a specified fractal.  For compact fractafolds based on the \Si\ gasket, it was shown in \cite{St03} how to compute the spectrum of the Laplacian in terms of the spectrum of a \Lp\ on a graph $\G$ that describes how copies of SG are glued together to make the fractafold.  On the other hand, in \cite{T98} a similar problem was solved for the case of infinite blowups of SG.  These are noncompact fractafolds where the graph $\G$ mirrors the self-similar structure of SG.  Not surprisingly, the spectrum in the compact case is discrete, and the eigenvalues and eigenfunctions are described by the method of spectral decimation introduced in \cite{FS}.
The surprise is that for the infinite blowups the spectrum is pure point, meaning that there is a basis of $L^2$ eigenfunctions (in fact compactly supported), but each eigenspace is infinite dimensional and the closure of the set of eigenvalues is a Cantor set.  Again the method of spectral decimations allows an explicit description of the eigenvalues and eigenfunctions.

In this paper we combine the ideas from these earlier works \cite{St03,T98} to obtain a description of the spectral resolution of the Laplacian for noncompact fractafolds with infinite cell graphs $\G$.  The graph $\G$ is assumed to be 3-regular, so the fractafold has no boundary.  The edge graph $\Go$ is then 4-regular, and the fractafold is obtained as a limit of graphs obtained inductively from $\Go$ by filling in detail (that is, each graph triangle is eventually replaced with a copy of the \Sig).  Our first main abstract result is Theorem~\ref{thm-main}, which describes how to obtain the spectral resolution of the Laplacian on the fractafold from the spectral resolution of the graph Laplacian on $\Go$.  This is a version of spectral 
 decimation, and uses an idea from \cite{OSSt} to control the $L^2$ norms of functions under spectral decimation.  The second main abstract result is Theorem \ref{thm4.1}, which shows how to obtain the spectral resolution of the graph Laplacian on $\Go$ from the spectral resolution of the graph Laplacian on $\G$ using ideas from \cite{Shirai,St10}.  We note that the spectral resolution on $\Go$ may or may not contain the discrete eigenvalues equal to 6, and the explicit determination of the 6-eigenspace and its eigenprojector must be determined in a case-by-case manner.  Combining the two theorems enables us to obtain a completely explicit description of the spectral resolution of the fractafold Laplacian to the extent that we are able to solve the following problems:
\begin{itemize}
\item[(a)] Find the explicit spectral resolution of the graph Laplacian on $\G$;
\item[(b)] Find an explicit description of the 6-eigenspace and its eigenprojector for the graph Laplacian on $\Go$.
\end{itemize}

The bulk of this paper is devoted to solving these two problems for some specific examples.  However, we would like to highlight another problem that arises if we wish to turn a spectral resolution into a ``Plancherel formula''.  Typically we will write our spectral resolutions as
\begin{equation}\label{1.1}
f(x)=\int_{\sigma(-\Delta)} \left(\int P(\lambda, x, y) f(y) d\mu(y) \right) dm(\lambda) \end{equation} 
where $P(\lambda, x, y)$ is an explicit kernel realizing the projection onto the $\lambda$-eigenspace, i.e.
\begin{equation}\label{1.2}
-\Delta\int P(\lambda, x, y) f(y) d\mu(y) = \lambda \int P(\lambda, x, y) f(y)d\mu(y) \end{equation}
and $dm(\lambda)$ is a scalar spectral measure. (Here neither $P(\lambda, x, y)$ nor $dm(\lambda)$ are uniquely determined, since we can clearly multiply them by reciprocal functions of $\lambda$ while preserving \eqref{1.1} and \eqref{1.2}.)  If we write 
\begin{equation}\label{1.3}
P_\lambda f(x)=\int  P(\lambda, x, y) f(y)d\mu(y) \end{equation}
then \eqref{1.1} resolves $f$ into its components $P_\lambda f$ in the $\lambda$-eigenspaces.  A Plancherel formula would express the squared $L^2$-norm $||f||^2_2$ in terms of an integral of contributions from the components $P_\lambda f$.  In the case of pure point spectrum this is straightforward, for then the $\lambda$-integral is a discrete possibly infinite sum, and we just have to take the $L^2$-norm of each $P_\lambda f$, so 
\begin{equation}\label{1.4}
||f||_2^2 =\Sum_{\lambda \in \sigma(-\Delta)} ||P_\lambda f||_2^2  \end{equation}
where $P_\lambda$ is the eigenprojection. The spectral measure  $m$ is the counting measure in this case. 

In the case of a continuous spectrum this is decidedly not correct, and there does not appear to be a generic method to obtain the correct analog.  So we pose this as a third problem:
\begin{itemize}
\item[(c)] describe explicitly a Hilbert space of $\lambda$-eigenfunctions with norm $||\phantom{Pf}||_\lambda$ such that $||P_\lambda f||_\lambda$ is finite for m-a.e. $\lambda$ and 
\begin{equation}\label{1.5}
||f||_2^2 = \int_{\sigma(-\Delta)} ||P_\lambda f||_\lambda ^2 dm(\lambda). \end{equation} \end{itemize}
This problem is interesting essentially only when the eigenspace is infinite dimensional. 
The resolution of this problem in some classical settings is discussed in \cite{St89} and \cite{AIonescu}.  Here we present a solution to this problem for the graph Laplacian on the 3-regular tree.  This result appears to be new, and is of independent interest.  

The first specific examples we consider is the \emph{tree fractafold}, discussed in Section \ref{sec5}, where $\G$ is the 3-regular tree.  In this case the solution to a) is well-known \cite{Cartier,F-TN}.  We solve (b) by showing that the 6-eigenspace on $\Go$ is infinite dimensional and we give an explicit tight frame for this space.  We solve (c) in terms of a mean value on the tree that is in fact different from the obvious mean value.  The fractafold spectrum in this example is a union of point spectrum and absolutely continuous spectrum.

In Section \ref{sec6} we discuss periodic fractafolds, concentrating on a \emph{honeycomb fractafold} where $\G$ is a hexagonal lattice.  In this case the solution to a) is also well-known.  Our solution to b) gives a basis for the infinite dimensional 6-eigenspace of compactly supported functions.  Finally in Section \ref{sec-frSiff} we discuss an example of a finitely ramified periodic \Si\ fractal field  (see \cite{HK2}) that is not a fractafold, but can be treated using our methods.

Essentially all the results of this paper can be extended to fractafolds based on the $n$-dimensional \Sig, using similar methods. It seems likely that similar results could be obtained for any \pcf\ fractal for which the method of spectral decimation applies (see 
\cite[\arti]
{eigenpapers,DS,FordSteinhurst,FS,HSTZ,Jordan,Ki01book,KT,MT03,Sh,St06book,Zhou1,Zhou2}).

 \subsection*{Acknowledgments} We are  grateful to 
Peter Kuchment and Daniel Lenz for very helpful discussions, and to Eugene~B.~Dynkin for asking questions about the periodic fractal structures. We thank Matthew Begue for help in the manuscript preparation.

\section{Set-up results for \iSif s}\label{sec-iSif}

\subsection{\Lp\ on the \Sig} 
\label{subsec-lp-}
We denote by $\DS$ the standard \Lp\ on $SG$, and by $\mu_{SG}$ the standard normalized Hausdorff probability measure on ${SG}$ (see \cite{Ki89,Ki01book,St06book} for details). 
\begin{figure}[htb]\centering
\includegraphics[height=26pt,width=30pt]{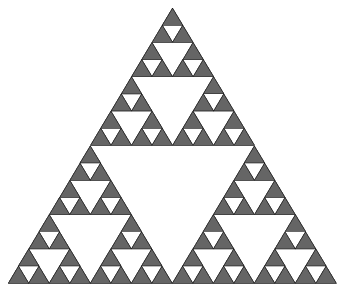}
\caption{\Sig.}\label{fig-Sig}
\end{figure}
The \Lp\ $\DS$ is self-adjoint on $L^2({SG},\mu_{SG})$ with appropriate boundary conditions (usually Dirichlet or Neumann).  
The \Lp\ $\DS$ can be defined either probabilistically or analytically, using Kigami's resistance (or energy) form and the relation
$$\mathcal E(f,f)=-\frac32\int_{SG} f\DS fd\mu_{SG}$$
for functions in the corresponding domain of the \Lp. The  energy is defined by 
$$\mathcal E(f,f)=\lim_{n\to\infty}\left(\frac53\right)^n\sum_{x,y\in V_n, x\sim y}\left(f(x)-f(y)\right)^2.$$ In these formulas $V_n$ is a finite set of $(3^{n+1}+3)/2$ points 
in $SG$ that are at the Euclidean distance $2^{-n}$ from the neighboring points, and $\sim$ denotes the recursively defined graph structure on $V_n$. Note the normalization factor $\frac32$ that is inserted here for the convenience of computation (see \cite{St06book} the explanations). 

\subsection{Spectral decimation and the eigenfunction extension map} 
\label{subsec-s-d}
Both Dirichlet and Neumann spectra of $\DS$ are well known (see \cite{FS,St06book,T98}).  
To compute the spectrum of $\DS$ 
one employs  the so called spectral decimation method  using inverse iterations of the polynomial $$R(z)=z(5-z).$$ By convention the eigenvalue equation is written $-\DS u = \lam u$ because $-\DS$ is a non-negative 
operator. Each positive eigenvalue can 
be written as 
\BEQ{e-lam-}{\lam=       \lim_{m\to\infty}5^m\lam_m=       5^{m_0}\lim_{k\to\infty}5^{k}\lam_{k+{m_0}}} for a sequence $\{\lam_m\}_{m=m_0}^\infty$
 such that $\lam_m=R(\lam_{m+1})$ and $\lam_{m_0}\in\{2,5,6\}$, which can be written as $$R^{\circ k}(\lam_{k+m_0})\in\{2,5,6\},$$
 where the powers $R^{\circ k}$ of $R$ are composition powers.
If we denote $\fR_k(z)=R^{\circ k}(5^{-k}z)$ then 
\BEQ{e-Rf-}{
R^{\circ k}(\lam_{k+m_0})=\fR_k(5^k \lam_{k+m_0})=\fR_k\left(\frac235^{-m_0}
       5^{m_0} 5^k\lam_{k+m_0}\right)}
Thus an important role is played by the function \BEQ{e1}{\fR(z)=\lim_{k\to\infty}R^{\circ k}(5^{-k}z).}  This is an analytic function, which is a classical object in complex dynamics, and a recent detailed study and background can be found in \cite{DGV,DGV2}.
In the context of the \Lp\ on the \Sig\ this function first appeared in 
\cite[Lemma 2.1]{ORS2009} and \cite[Remark 2.5]{DRS2009} (see also \cite{IPRRS,ORS2010} for related results). 
  In particular, this function can be defined as the solution   of the classical functional equation \BEQ{e1f}{R(\fR(z))=\fR(5z).} 
Note that, in a neighborhood of zero,  the inverse of the function $\fR$ can be defined by 
 \BEQ{e1i}{\sR(w)=\lim_{k\to\infty}5^{k}R^{-k}(w),} and satisfies the 
 functional equations \BEQ{e1if}{5\sR(w)=\sR(R(w)),} in a  neighborhood of zero.
 
 One can see from \eqref{e-Rf-} that each nonzero eigenvalue $\lam$ satisfies 
 $$\lam\in       5^{m_0}\fR^{-1}\{2,5,6\}\subset       \bigcup_{m=0}^\infty
 5^{m}\fR^{-1}\{2,5,6\}.$$ Some of the points in the union of these 
 sets are so-called ``forbidden eigenvalues'', and the rest are so-called  2-series, 5-series and 6-series eigenvalues (see \cite{St06book}). 
A detailed analysis shows that 
the spectrum of the Dirichlet \Lp\ is  $$\Sigma_{D}=       5\left(\fR^{-1}\{2,5\}\cup5\fR^{-1}\{5\}\bigcup_{{m_0}=2}^\infty5^{m_0}\fR^{-1}\{3,5\}\right)$$ 
 and the spectrum of the 
Neumann \Lp\ is 
 $$\Sigma_{N}=\{0\} \cup       5 \left(\fR^{-1}\{3\}\cup\bigcup_{{m_0}=1}^\infty5^{m_0}\fR^{-1}\{3,5\}\right).$$ The multiplicities, which grow exponentially fast with $k$, were computed explicitly in \cite{FS}, and also can be found in \cite{eigenpapers,St06book,T98}. 
  Note that, because of the functional equations \eqref{e1f} and \eqref{e1if}, and because $R(2)=R(3)=6$, we have $$5\left(\fR^{-1}\{2\}\cup\fR^{-1}\{3\}\right)=\fR^{-1}\{6\}.  $$

If we define 
$$
\Sigma_{ext}=        5 \left(\fR^{-1}\{2\}\cup
\bigcup_{{m}=0}^\infty5^{m}\fR^{-1}\{5\}\right)\subset\fR^{-1}\{0,6\}.
$$
then we have the following proposition. 
\BB{proposition}{aa}{For any $v\in \partial {SG}$ and any complex number $\lam\notin \Sigma_{ext}$ there is a unique continuous function $\pvz(\cdot):{SG}\to\mathbb R$, such that  $\pvz(v)=1$, $\pvz$ vanishes at the other two boundary points, and the pointwise eigenfunction equation $-\D\pvz(x)=\lam\pvz(x)$ holds at every point $x\in {SG}\backslash \partial {SG}$.} 
Naturally, $\pvz$ is called the eigenfunction extension map, which is explained in \cite[Section 3.2]{St06book}, and the proposition is essentially the same as \cite[Theorem 3.2.2]{St06book}. 


\BB{example}{ex-01}{{ Spectral decimation for the unit interval [0,1].} \normalfont In order to illustrate these notions we briefly explain how they look 
 in a more classical case of the unit interval. We have that 
 $\D_{[0,1]}=\frac {d^2}{dx^2}$ is the standard \Lp\ on ${[0,1]}$, and if $\mu_{{[0,1]}}$ is the Lebesgue measure on ${{[0,1]}}$ then $\D_{[0,1]}$ is self-adjoint  and $$\mathcal E(f,f)=\int_0^1 (f'(x))^2dx=-\int_{{[0,1]}} f\D_{[0,1]} fd\mu_{{[0,1]}}$$
for functions in the   domain of the Dirichlet or Neumann \Lp. The  energy can also be  defined by 
$\mathcal E(f,f)=\lim_{n\to\infty}2^n\sum_{x,y\in V_n, x\sim y}\left(f(x)-f(y)\right)^2$ where $V_n=\{k/2^n\}_{k=0}^{2^n}$. 
To compute the spectrum of $-\D_{[0,1]}$ 
one can use  the  spectral decimation method  with inverse iterations of the polynomial $R (z)=z(4-z).$ Each positive eigenvalue can 
be written as 
$\lam=       \lim_{m\to\infty}4^m\lam_m$ 
for a sequence $\{\lam_m\}_{m=m_0}^\infty$
 such that $\lam_m=R(\lam_{m+1})$ and $\lam_{m_0}\in\{0,4\}$.
Then $\fR(z)=\lim_{k\to\infty}R^{\circ k}(4^{-k}z)=2-2\cos(\sqrt z)$ satisfies the 
functional equation $R(\fR(z))=\fR(4z)$. In this case
$\sigma({-\D_{[0,1]}})\subset \fR^{-1}\{0,4\}$, the multiplicity is one,  
and 0 is in the  Neumann spectrum but not in the Dirichlet spectrum. 
The eigenfunction extension map is $$\pvz(x)=\cos(\sqrt\lam\, |x{-}v|)-\frac{\cos(\sqrt\lam)}{\sin(\sqrt\lam)}\sin(\sqrt\lam \,|x{-}v|)$$ where $v$ is 0 or 1. 

For much more information on this example and its relation to quantum graphs see \cite{Post2008} and references therein. }

\subsection{Underlying graph assumptions and \Sif s} 
Let $\Go=(V_0,E_0)$ be a finite or  infinite graph. To define a \Sif, we  assume that $\Go$ is a 4-regular graph which is a union of complete graphs of  3 vertices. It can be said that $\Go$ is a regular 3-hyper-graph in which every vertex belongs to two hyper-edges. A hyper-edge in this case is a complete graphs of 3 vertices, and we call it a cell, or a 0-cell, of $\Go$. We denote the discrete \Lp\ on $\Go$ by $\D_\Go$. (In principle, these  assumptions can be weakened; see Section~\ref{sec-frSiff} and Figure~\ref{fig-t-lattice} for instance). 

Let ${SG}$ be the usual compact \Sig\ (see Figure~\ref{fig-iSig}). We define a \Sif\  $\fF$ by replacing each cell of $\Go$ by a copy of ${SG}$. These copies we call  cells, or 0-cells, of the \Sif\  $\fF$. 
Naturally, the corners of  the copies of the \Sig\ ${SG}$ are identified with the vertices of $\Go$. 

A fractafold is called infinite if the graph $\Go$ is infinite. In particular, finite fractafolds are compact and infinite fractafolds are not compact. All the details  can be found in  \cite{St03}. In this paper we use the same notation as in \cite{St03} as much as possible (see also  \cite{St10}). 
Since the pairwise intersections of the 
cells of the \Sif\  $\fF$ are finite, we can consider the natural measure on the \Sif\  $\fF$, which we also  denote $\mu$. Furthermore, since $\DS$ is a local operator, we can define a local  \Lp\ $\D$ on the \Sif\  $\fF$,   as explained in \cite{St03}.

 \subsection{Eigenfunction extension map on fractafolds} \label{subsec-eem}
 
 For any $v\in V_0$ and $\lam\notin\Sigma_{ext}$ there is a unique continuous function $\pvz(\cdot):\fF\to\mathbb R$ such that \begin{enumerate}\item the support of $\pvz$ is contained in the union of of the cells of the \Sif\  $\fF$ that contain $v$;\item $\pvz(v)=1$;\item the pointwise eigenfunction equation $$-\D\pvz(x)=\lam\pvz(x)$$ holds at any point $x\in \fF\backslash V_0$.
\end{enumerate}
For any function $f_0$ on $\Go$ (and any $\lam$ as above), we define the eigenfunction extension map by  \BEQ{e2}{\Psi_{\lam}  f_0(x)=\sum_{v\in V_0}f_0(v)\pvz(x).} By definition, $f=\Psi_\lam f_0$ is a continuous extension of $f_0$ to the \Sif\  $\fF$ which is a pointwise solution to the eigenvalue equation above for all $x\in \fF\backslash V_0$. Moreover, 
it is known that if $f_0$ is a pointwise solution to the eigenfunction equation $-\D_\Go f_0=\lambda_0f_0$ on $\Go$, and $\lambda_0\notin\{0,6\}$, then $f=\Psi_{\lam} f_0$ is a continuous extension of $f_0$ to the \Sif\  $\fF$ which is a pointwise solution to the eigenvalue equation above for all $x\in \fF$. Note that here we have   $\lam\in       \fR^{-1}(\lambda_0)$, where $\mathfrak  R$ is as above. The eigenfunction extension map is explained in \cite{St06book} on page 69. 

It is easy to see that $\Pz:\ell^2(V_0)\to L^2(\fF,\mu)$ is a bounded linear operator for any $\lam\notin    \fR^{-1}\{2,5,6\}$, and its adjoint 
$\Pz^*:L^2(\fF,\mu)\to\ell^2(V_0) $ 
can be computed as \BEQ{e3}{\Big(\Pz^* g\Big)(v)=\int_\fF g(x)\pvz(x)d\mu(x).} 

\subsection{Spectral decomposition (resolution of the identity)} We suppose that the self-adjoint discrete \Lp\ $\D_\Go$ on $\Go$ has a spectral decomposition (resolution of the identity) \BEQ{e4}{-\D_\Go=\int_{\sigma(-\D_\Go)}\lambda dE_\Go(\lambda). } 
 which has a form 
 \BEQ{e7}{-\D_\Go f_0(v)=\int_{\sigma(-\D_\Go)}\lambda\sum_{u\in V_0} P_\Go(\lam,u,v)f_0(u)dm_\Go(\lam) }
 where $m(\cdot)$ is a spectral measure of $-\D$ which is a Borel measure on $\sigma(-\D_\Go)$ (see Section~\ref{sec-general} for more detail).
 
We define a function $M(\lam ) $  as the infinite product
\BEQ{eM}{M(\lam) =\prod_{m=1}^\infty 
\frac{(1-\frac15\lam_m)(1-\frac12\lam_m)}{(1-\frac16\lam_m)(1-\frac25\lam_m)} }
where 
$$\lam=       \lim_{m\to\infty}5^m\lam_m$$ and $\lam_m=R(\lam_{m+1})$.
The function $M(\cdot )$ is  known from \cite[Lemma 2.2 and Corollary 2.4]{OSSt} where it  appears when the $L^2$ norm of eigenfunctions on the \Sig\ is computed. This function   does not depend on the fractafold, but only on the \Sig.

We denote $$\Sigma_\infty=       5\left(\fR^{-1}\{2\}\cup\bigcup_{m=0}^\infty
 5^{m}\fR^{-1}\{3,5\}\right)$$
 and
$$\Sigma_\infty'=       5\left(\bigcup_{m=1}^\infty
 5^{m}\fR^{-1}\{3,5\}\right)\subset\Sigma_\infty.$$
 Note that for the difference of these two sets we have 
 $$\Sigma_\infty\backslash\Sigma_\infty'=       5\fR^{-1}\{2,3,5\}
  \subset\fR^{-1}\{0,6\}.$$

\begin{theorem}\label{thm-main}
The \Lp\ $\D$ is self-adjoint and \BEQ{e5}{\fR^{-1}(\sigma(-\D_\Go))\,\cup\,\Sigma_\infty'
\subset
\sigma(-\D)\subset \fR^{-1}(\sigma(-\D_\Go))\,\cup\,\Sigma_\infty.} Moreover, the spectral 
 decomposition  $$-\D=\int\limits_{\sigma(-\D)}\lam  dE(\lam ) $$ can be written as 
 \BEQ{e6}{-\D=       
 \int\limits_{\fR^{-1}(\sigma(-\D_\Go))\backslash\Sigma_\infty}\lam  M(\lam ) \Pz^*d\Big(E_\Go(\fR(\lam))\Big)\Pz+\sum_{\lam \in\Sigma_\infty}\lam \,E\{\lam \}.}
 Here 
 $E\{\lam \}$ denotes the eigenprojection if $\lam $ is an eigenvalue (the eigenprojection is non-zero \iFF $\lam $ is  an eigenvalue). 
 
 All eigenvalues and eigenfunctions of  $\D$  can be computed by the spectral decimation method as so called offspring of either localized eigenfunctions on approximating graph \Lp s, or of eigenfunctions on $\Go$.  
Furthermore, the \Lp\ $\D$ on the \Sif\  $\fF$ has 
 the spectral decomposition of the   form 
 \BEQ{e8}{-\D f(x)=       
 \int\limits_{\fR^{-1}(\sigma(-\D_\Go))\backslash\Sigma_\infty}\lam \,\left(\int_\fF P(\lam ,x,y)f(y)d\mu(y)\right)dm(\lam )\, +\sum_{\lam \in\Sigma_\infty}\lam \,E\{\lam \}f(x)} 
 where $m=m_\Go\circ\fR$  
 and 
\BEQ{e9}{ P(\lam ,x,y)=M(\lam )\sum_{u,v\in V_0}\pvz (x)\puz (y)P_\Go(\fR(\lam ),u,v).} 
 \end{theorem}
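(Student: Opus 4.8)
The plan is to decompose $L^2(\fF,\mu)$ into two mutually orthogonal, $\D$-invariant subspaces and to treat each separately: a \emph{decimated} part built by applying the eigenfunction extension map $\Pz$ to the spectral resolution of $\D_\Go$, and a \emph{localized} part spanned by compactly supported eigenfunctions whose eigenvalues constitute $\Sigma_\infty$. Self-adjointness of $\D$ I would obtain from Dirichlet form theory: the energy $\sE$ on $\fF$, assembled from the cell energies, is a closed regular local Dirichlet form, so the associated nonnegative operator $-\D$ is self-adjoint; since $\G$ is $3$-regular the \Sif\ has no boundary, so there are no boundary conditions to impose. Granting the two-piece decomposition, the spectral inclusions \eqref{e5} and the resolution \eqref{e6} follow, and \eqref{e8}--\eqref{e9} are obtained by writing $\Pz$, $\Pz^*$ and $dE_\Go$ through their kernels $\pvz$, $\puz$ and $P_\Go$.

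For the decimated part the central point is the behaviour of $\Pz$ on $L^2$ norms. The intertwining recalled in Section~\ref{subsec-eem} — if $-\D_\Go f_0=\lambda_0 f_0$ with $\lambda_0\notin\{0,6\}$ and $\lam\in\fR^{-1}(\lambda_0)$ then $-\D\,\Pz f_0=\lam\,\Pz f_0$ — shows that each branch $\lam$ of $\fR^{-1}(\lambda_0)$ carries the $\lambda_0$-eigenspace of $\D_\Go$ into the $\lam$-eigenspace of $\D$. The quantitative input, which is exactly the role of \cite[Lemma 2.2 and Corollary 2.4]{OSSt}, is that $M(\lam)$ records the $L^2$-norm distortion produced by filling in one more level of \Si\ detail, accumulated over all levels through the infinite product \eqref{eM}. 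I would use this to show that the weighted family $M(\lam)\,\Pz^*\Pz$, as $\lam$ runs over the branches of $\fR^{-1}(\lambda_0)$, resolves the identity on the $\lambda_0$-eigenspace; equivalently, that $M(\lam)\,\Pz^* d\big(E_\Go(\fR(\lam))\big)\Pz$ is a projection-valued measure supported on $\fR^{-1}(\sigma(-\D_\Go))\setminus\Sigma_\infty$ whose range is the closed span of all extensions $\Pz f_0$. This supplies the integral term of \eqref{e6}, while convergence and nonvanishing of \eqref{eM} for $\lam\notin\fR^{-1}\{2,5,6\}$ give boundedness of $\Pz$ and the adjoint formula \eqref{e3}.

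For the localized part I would prove completeness by spectral decimation ``from the top''. Restricting any $L^2$ eigenfunction of $-\D$ to the approximating graphs $\Gamma_n$ (obtained from $\Go$ by filling in $n$ levels of detail) produces graph eigenfunctions whose eigenvalues satisfy $\lam_n=R(\lam_{n+1})$; tracing this sequence down either reaches an eigenfunction of $\D_\Go$, placing $\lam$ in $\fR^{-1}(\sigma(-\D_\Go))$ and realizing the eigenfunction in the range of $\Pz$, or else the decimation degenerates at a finite level, forcing the eigenfunction to vanish on the relevant boundary vertices and hence to be a compactly supported, localized eigenfunction. The eigenvalues produced in the second case are precisely the inverse iterates of the localized seed values $\{2\}$ and $\{3,5\}$, that is $\Sigma_\infty$; the always-present subset $\Sigma_\infty'$ gives the lower inclusion in \eqref{e5}, while the extra values $\Sigma_\infty\setminus\Sigma_\infty'=5\fR^{-1}\{2,3,5\}\subset\fR^{-1}\{0,6\}$ occur only when the degeneration meets the exceptional graph eigenvalues $0$ or $6$. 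Since localized eigenfunctions vanish on $V_0$ they are annihilated by $\Pz^*$, which gives orthogonality of the two families; summing their eigenprojections $E\{\lam\}$ yields the discrete term of \eqref{e6} and completes the direct-sum decomposition, establishing also that every eigenfunction of $\D$ is an offspring either of an eigenfunction on $\Go$ or of a localized eigenfunction on some $\Gamma_n$.

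The hard part will be two intertwined points. First is the norm control: establishing convergence and nonvanishing of the product \eqref{eM} and proving that $M(\lam)$ genuinely governs $\Pz^*\Pz$ so that the decimated piece is unitarily equivalent to the model determined by $E_\Go$ after the substitution $\lambda_0=\fR(\lam)$; this is where the delicate estimates of \cite{OSSt} are indispensable. Second, and more subtle, is the behaviour at $\fR^{-1}\{0,6\}$: one must verify that no $L^2$ eigenfunctions are overlooked and that the boundary between the decimated and localized parts is correctly assigned at the exceptional values — precisely the reason the determination of the $6$-eigenspace and its eigenprojector is singled out as problem (b) and handled example by example. Once both are in hand, assembling the projection-valued measure with the sum over $\Sigma_\infty$ gives \eqref{e6}, and passing to kernels produces \eqref{e8}--\eqref{e9} with $m=m_\Go\circ\fR$ and $P(\lam,x,y)=M(\lam)\sum_{u,v\in V_0}\pvz(x)\puz(y)P_\Go(\fR(\lam),u,v)$.
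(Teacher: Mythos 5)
Your self-adjointness step (resistance/Dirichlet form theory) coincides with the paper's, but your overall architecture --- an a priori orthogonal decomposition of $L^2(\fF,\mu)$ into a ``decimated'' subspace and a ``localized'' subspace, each handled separately --- contains a genuine gap at its central point: \emph{completeness}. The only argument you offer that the two subspaces together exhaust $L^2(\fF,\mu)$ is the ``decimation from the top'' restriction of $L^2$ eigenfunctions of $-\D$ to the approximating graphs. That argument can only ever account for the point-spectrum part of $-\D$: it says nothing about a vector orthogonal to both subspaces unless that vector can be approximated by eigenfunctions, and here the spectrum of $-\D$ generically carries continuous components (the tree fractafold of Section~\ref{sec5} has absolutely continuous spectrum, and the Barlow--Perkins fractafold of Proposition~\ref{prop-BP} has a singularly continuous component). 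Equivalently: you assert that $M(\lam)\,\Pz^*d\bigl(E_\Go(\fR(\lam))\bigr)\Pz$ is a projection-valued measure whose range is the closed span of the extensions $\Pz f_0$, but the theorem requires that this range together with the $\Sigma_\infty$-eigenspaces is \emph{all} of $L^2(\fF,\mu)$ --- that is exactly the content of \eqref{e6}, and it is never proved. A secondary flaw: your orthogonality claim that localized eigenfunctions are ``annihilated by $\Pz^*$'' because they vanish on $V_0$ does not hold as stated, since $\Pz^*g(v)=\int_\fF g\,\pvz\,d\mu$ depends on the values of $g$ throughout the cells containing $v$, not merely on $g|_{V_0}$; orthogonality needs a Gauss--Green computation or a distinct-eigenvalue argument.

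The paper's proof avoids the completeness problem rather than solving it head-on, and this is the essential difference. It never decomposes $L^2(\fF,\mu)$ in advance: it introduces the graph approximations $V_n$, on which the \emph{complete} spectral resolution (the discrete version of \eqref{e8}, with normalization factor matching $M$ in \eqref{eM}) is already known from \cite[Theorem 3.3]{eigenpapers}; it then takes $u,f$ continuous with compact support (a core), sets $v=(-\D+1)^{-1}f$, and uses the pointwise/uniform convergence $\D f(x)=\lim_n 5^n\D_n f(x)$ to compute $\langle u,v\rangle_{L^2}$ as the integral of $(\lam+1)^{-1}$ against the claimed spectral measure; the identity $\langle u,v\rangle_{L^2}=\int_{\sigma(-\D)}(\lam+1)^{-1}\langle u,dE(\lam)f\rangle_{L^2}$ on a core, together with the general theory of self-adjoint operators \cite[Section VIII.7]{RS}, then forces $E$ to be the claimed resolution of the identity, so completeness is inherited from the discrete levels rather than re-proved. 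To repair your proposal you would have to supply exactly this kind of resolvent identification (or an independent proof that nothing is orthogonal to both of your subspaces); the norm-control input from \cite{OSSt} that you flag as ``the hard part,'' while necessary, does not by itself close this hole.
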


\begin{proof} 
Let $\G_0=(V_0,E_0)$ be as above and let $\G_1=(V_1,E_1)$ be a  graph obtained from $\G_0$ by replacing each cell of $\G_0$ with the graph shown below. \newline
\def\trigup{\put(0, 0){\line(1, 0){6}}\put(0, 0){\line(3, 5){3}}\put(6, 0){\line(-3, 5){3}}}
\def\trigdown{\put(0, 0){\line(1, 0){6}}\put(0, 0){\line(3, -5){3}}\put(6, 0){\line(-3, -5){3}}}
\centerline{
\begin{picture}(30, 35)(0, -5)
\setlength{\unitlength}{2.5pt}\thicklines
\put(0,0){\trigup}
\put(3,5){\trigup}
\put(6,0){\trigup}
\end{picture}}
\newline
\noindent The three vertices of the biggest triangle in the above graph replace  the three vertices  of each cell of $\G_0$.  We repeat this procedure recursively to define a sequence of discrete approximations $V_n$ to the fractafold the \Sif\ $\fF$. On each 
$V_n$ we consider discrete energy form, which converge as $n\to\infty$ 
with the same normalization as in Subsection~\ref{subsec-lp-}. 
In the limit we obtain a resistance form $\sE$ of the \Sif\  $\fF$ and 
 one can use the theory of resistance forms of Kigami (see \cite{Ki01book,Ki03})  to define the weak \Lp\ $\D$ on the \Sif\  $\fF$.  
 More precisely, the resistance   form is a regular Dirichlet form on $L^2(\fF,\mu)$ by \cite[Theorem 8.10]{Ki03}, for which a 
self-adjoint \Lp\ $\D$ is uniquely defined (see \cite[Proposition 8.11]{Ki03}). One can easily see that in this case the set of continuous compactly supported functions in $Dom\D$ such that $\D f$ is also continuous (and also compactly supported) form a core. For any such function $f$ the \Lp\ $\D f$ can be approximated by discrete 
\Lp s, that is $\D f(x)=\lim_{n\to\infty}5^n\D_n f(x)$, where $\D_n$ is the graph \Lp\ on $V_n$. The limit is pointwise for each $x\in V_*=\bigcup V_n$, and is unform on compact subsets of the \Sif\  $\fF$ provided $\D f$ is continuous with compact support. The pointwise  and uniform convergence of discrete \Lp s in this case is justified in the same as way in the case of the \Lp\ on the \Sig. 

Using notation of Subsections~\ref{subsec-lp-} and \ref{subsec-s-d}, we denote $m_n=m_\Go\circ\fR_n$  and 
$$ P_n(\lam ,x,y)=M_n(\lam )\sum_{u,v\in V_0}\pvz (x)\puz (y)P_\Go(\fR(\lam ),u,v)$$ 
where  $M_n(\lam )$ is defined as the partial product in the definition of $M(\lam )$. We further denote $$\Sigma_n=       5\left(\fR^{-1}_n\{2\}\cup\bigcup_{m=0}^{n-1}
 5^{m}\fR^{-1}_m\{3,5\}\right)$$ and let $E_n{\lam}$ be the eigenprojection of $-\D_n$ 
 corresponding to $\lam$.
 Then we have the discrete version of the formula \eqref{e8} because of the computation in \cite[Theorem 3.3]{eigenpapers}  (see also Sections \ref{sec-general} and \ref{sec5} below, where $P_\Go(\lam ,u,v)$ is denoted by $\tilde{P_\lambda}(u,v)$). Note that in \cite[Theorem 3.3]{eigenpapers} the normalization factor was $1/(\phi R')$, where $\phi(z)=\frac{3-2z}{(5-4z)(1-2z)}$ and $R(z)=z(5-4z)$, which produces the normalization factor 
$$\frac{(5-4z)(1-2z)}{(3-2z)(5-8z)}=\frac13\frac{(1-4z/5)(1-4z/2)}{(1-4z/6)(1-8z/5)},$$
which is the same as in \eqref{eM}. Here $4z$ replaces $\lam_m$ because the distinction between probabilistic and graph \Lp s, and the extra factor 
 $\frac13$ is because of the integration in \eqref{e8}. 

Let $u$ and $f$ be continuous functions on the \Sif\  $\fF$ with compact support and let 
$$v=(-\D+1)^{-1}f.$$ The usual energy and $L^2$ estimates imply that $v\in Dom(\D)$ is continuous, square integrable, and 
$-\D v=f-v$. We have, by the discrete approximations,  that  the inner product 
$\langle u,v\rangle_{L^2}$ is equal to $$
\int\limits_{\fR^{-1}(\sigma(-\D_\Go))\backslash\Sigma_\infty}\frac1{\lam+1} \,\left\langle u,\int_\fF P(\lam ,x,y)f(y)d\mu(y)\right\rangle_{L^2}dm(\lam )\, +\sum_{\lam \in\Sigma_\infty}\frac1{\lam+1} \,\left\langle u,E\{\lam \}f\right\rangle_{L^2}
$$ 
and so we have the relation 
  $$\langle u,v\rangle_{L^2}=\int_{\sigma(-\D)}\frac1{\lam+1} \left\langle u, dE(\lam )f\right\rangle_{L^2}$$ when $u,f  $ are continuous functions with compact support. The theorem then follows by 
  the general theory of self-adjoint operators 
  \cite[Section VIII.7]{RS}. 
\end{proof}


\subsection{Infinite  \Sig s.}\label{sec-iSig}

As a collection of first examples we consider the infinite \Sig s, where the spectrum was analyzed in \cite{Be,T98,Quint}. \begin{figure}[htb]\centering
\def\q{\includegraphics[height=26pt,width=30pt]{s333-s.eps}}
\def\qq{\put(0,0){\q}\put(28.6,0){\q}\put(14.3,25.2){\q}}
\reflectbox{{\begin{picture}(120, 177)(60, 0)
\put(0,0){\qq}\put(57.2,0){\qq}\put( 28.6,50.4){\qq}
\put(114.4,0){\qq}\put( 57.2,100.8){\qq}
\multiput(171.6,0)(-28.6,50.4){4}{\q}
\end{picture}}}\hskip10.08em\begin{picture}(120, 177)(60, 0)
\put(0,0){\qq}\put(57.2,0){\qq}\put( 28.6,50.4){\qq}
\put(114.4,0){\qq}\put( 57.2,100.8){\qq}
\multiput(171.6,0)(-28.6,50.4){4}{\q}
\end{picture}
\caption{A part of an infinite  \Sig.}\label{fig-iSig}
\end{figure}
\begin{figure}[htb]\centering
\def\q{\thicklines\qbezier(-2,-1)(0,0)(2,1)\qbezier(-2,1)(0,0)(2,-1)}
\def\qq{\put(0,0){\q}\put(0,2){\q}\put(0,4){\q}\put(0,6){\q}}
\def\qqq{\put(0,2){\qq}\put(0,15){\q}\put(0,20){\qq}}
\begin{picture}(400, 110)(0, 0)
\put(00,90){\q}
\put(00,45){\q}
\qqq\put(0,52){\qqq}
\def\q{\setlength{\unitlength}{1pt}\thicklines\qbezier(-1,-2)(0,0)(1,2)\qbezier(-1,2)(0,0)(1,-2)}
\put(-7,0){\SMALL0}\put(-7,45){\SMALL3}\put(-7,75){\SMALL5}\put(-7,90){\SMALL6}
\put(0,0){\vector(1,0){400}}\put(0,0){\vector(0,1){100}}
\multiput(0,90)(2.5,0){160}{\line(1,0){1}}
\multiput(0,45)(2.5,0){160}{\line(1,0){1}}
\multiput(21,0)(0,2.5){18}{\line(0,1){1}}\put(021,0){\q}
\multiput(129,0)(0,2.5){18}{\line(0,1){1}}\put(129,0){\q}
\multiput(225,0)(0,2.5){18}{\line(0,1){1}}\put(225,0){\q}
\multiput(355,0)(0,2.5){18}{\line(0,1){1}}\put(355,0){\q}
\multiput(60,0)(0,2.5){36}{\line(0,1){1}}
\multiput(90,0)(0,2.5){36}{\line(0,1){1}}\put(90,0){\q}
\multiput(272,0)(0,2.5){36}{\line(0,1){1}}
\multiput(308,0)(0,2.5){36}{\line(0,1){1}}\put(308,0){\q}
\put(0,0){\setlength{\unitlength}{15pt}\qbezier(0,0)(5,12.5)(10,0)}
\put(200,0){\setlength{\unitlength}{15pt}\qbezier(0,0)(6,12.5)(12,0)}
\def\qq{\put(0,0){\q}\put(2,0){\q}\put(4,0){\q}\put(6,0){\q}}
\def\qqq{\put(2,0){\qq}\put(15,0){\q}\put(20,0){\qq}}
{\setlength{\unitlength}{.5pt}\qqq\setlength{\unitlength}{.66666pt}\put(00042,0){\qqq}}
\put(150,0){
\def\qq{\put(-0,0){\q}\put(-2,0){\q}\put(-4,0){\q}\put(-6,0){\q}}
\def\qqq{\put(-2,0){\qq}\put(-15,0){\q}\put(-20,0){\qq}}
{\setlength{\unitlength}{.5pt}\qqq\setlength{\unitlength}{.66666pt}\put(-00042,0){\qqq}}}
\put(200,0){\setlength{\unitlength}{1.2pt}
\def\qq{\put(0,0){\q}\put(2,0){\q}\put(4,0){\q}\put(6,0){\q}}
\def\qqq{\put(2,0){\qq}\put(15,0){\q}\put(20,0){\qq}}
{\setlength{\unitlength}{.6pt}\qqq\setlength{\unitlength}{.8pt}\put(00042,0){\qqq}}
\put(150,0){
\def\qq{\put(-0,0){\q}\put(-2,0){\q}\put(-4,0){\q}\put(-6,0){\q}}
\def\qqq{\put(-2,0){\qq}\put(-15,0){\q}\put(-20,0){\qq}}
{\setlength{\unitlength}{.6pt}\qqq\setlength{\unitlength}{.8pt}\put(-00042,0){\qqq}}}
}
\end{picture}
\caption{An illustration to the computation of the spectrum on the  infinite \Sig. The curved lines show the graph of the function $\fR(\cdot)$, the vertical axis contains the spectrum of $\sigma(-\D_\Go)$ and the horizontal axis contains the spectrum  $\sigma(-\D)$.}\label{fig-iSig-s}
\end{figure}
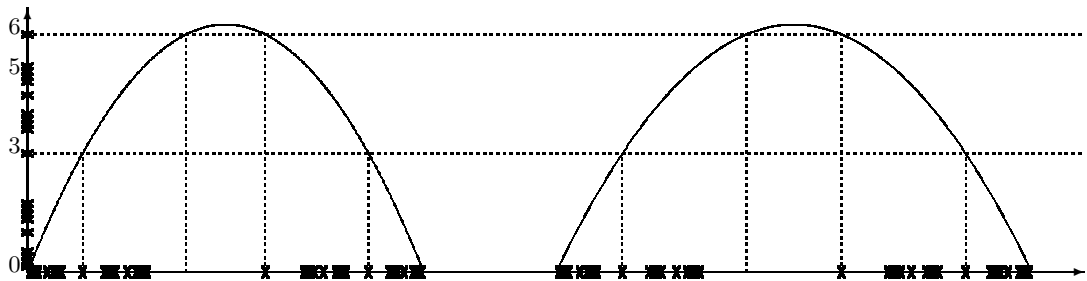
First, note that up to a natural isometry there is one \iSig\ with a distinguished boundary point (and hence it is not a fractafold), and there are uncountably many non-isometric \iSig s which are fractafolds (see \cite{T98} for more detail). 

If an \iSig\ fractafold is build in a self-similar way, as described in \cite{St98,T98}, then the spectrum on $\Go$ is pure point with two infinite series of eigenvalues of infinite multiplicity. One series of eigenvalues consists of isolated points which  accumulate to the Julia set $\mathcal J_R$ of the polynomial $R$, and the other series of points are located on the edges of the gaps of this Julia set (the Julia set in this case is a real Cantor set of one dimensional Lebesgue measure zero). The set of eigenvalues $\Sigma_0$ on $\Go$ consists of $6$ and all the preimages of $5$ and $3$ under the inverse iterations of $R$. In this case formula \eqref{e8} is the same as the formulas for eigenprojections in \cite{T98}. The illustration to the computation of the spectrum in Theorem~\ref{thm-main} is shown in Figure~\ref{fig-iSig-s}, 
where the graph of the function $\fR$ is shown schematically and the location of eigenvalues is denoted by small crosses. The spectrum $\sigma(-\D)$ is shown on the horizontal axis and the set of eigenvalues $\Sigma_0$ of $-\D_\Go$ is shown on the vertical axis.

A different \iSig\ fractafold can be constructed using two copies of  an \iSig\ with a boundary point, and joining these copies at the boundary. This fractal first was considered in \cite{BP}, and has a natural axis of symmetry between left and right copies. Therefore we can consider symmetric and anti-symmetric functions with respect to these symmetries. It was proved in 
\cite{T98} that the spectrum of the \Lp\ restricted to the symmetric part is pure point with a complete set of eigenfunctions with compact support. For the anti-symmetric part the compactly supported eigenfunctions are not complete, and it was proved in \cite{Quint} that the \Lp\ on $\Go$ 
has a singularly continuous component in the spectrum, supported on $\sJ_R$, of spectral multiplicity one. As a corollary of these and our  results we have the following proposition.

\BB{proposition}{prop-BP}{On the Barlow-Perkins  \iSif\ the spectrum of the \Lp\ consists of a dense set of eigenvalues $\fR^{-1}(\Sigma_0)$ of infinite multiplicity and of a singularly continuous component of spectral multiplicity one  supported on $\fR^{-1}(\sJ_R)$. }

%
%


\section{General infinite fractafolds and the main results}\label{sec-general}


Consider a fractafold with cell graph $\Gamma$, so $\G$ is an arbitrary infinite 3-regular graph.  The spectrum of $-\Delta_\Gamma$ is contained in [0,6], and by the spectral theorem there exist projection operators $E_I$ corresponding to intervals $I \subseteq [0,6]$.  Because we are in a discrete setting we can say a lot more.  There is a kernel function $E_I$ on $\G\times\G$ such that
\begin{equation}\label{4.1}
E_I f(a)=\displaystyle\sum_{b\in\G}E_I(a,b)f(b) \end{equation} 
and $I\to E_I(a,b)$ is a signed measure for each fixed $a, b$.  Since there are a countable number of such measures, we can find a single positive measure $\mu$ on [0,6] such that
\begin{equation}\label{4.2}
E_I(a,b)=\int_I P_\lambda(a,b)d\mu (\lambda) \end{equation}
for a function $P_\lambda(a,b)$ defined almost anywhere with respect to $\mu$ (so $P_\lambda(a,b)$ is just the Radon-Nykodim derivative of $E_I(a,b)$ with respect to $\mu$).  In fact, by a theorem of Besicovitch
\begin{equation}\label{4.3}
P_\lambda(a,b)=\lim_{\epsilon\to0}\frac{E_{[\lambda-\epsilon,\lambda+\epsilon]}(a,b)}{\mu([\lambda-\epsilon,\lambda+\epsilon])} \end{equation}
for $\mu-a.e. \lambda$.
(if $\mu$ is absolutely continuous this is just the Lebesgue differential of the integral theorem).
It follows from \eqref{4.3} that
\begin{equation}\label{4.4}
-\D_\G P_\lambda (\cdot,b)=\lambda P_\lambda(\cdot,b) \end{equation}
for $\mu-a.e. \lambda$. 
Thus if we define the pointwise projections
\begin{equation}\label{4.5}
P_\lambda f(a) = \displaystyle\sum_{b\in \G}P_\lambda (a,b)f (b) \end{equation}
then the spectral resolution is
\begin{equation}\label{4.6}
f=\int_\Sigma P_\lambda f d\mu(\lambda) \end{equation}
with
\begin{equation} \label{4.7}
-\D_\G P_\lambda f = \lambda P_\lambda f, \end{equation}
where $\Sigma \subseteq [0,6]$ is  the spectrum.  In other words, \eqref{4.6} represents a general function $f$ (we may take $f \in \ell^2(\G)$, or more restrictively a function of finite support) as an integral of $\lambda$-eigenfunctions.  Note that typically $P_\lambda f$ is not in $\ell^2(\G)$.  Also, the measure $\mu$ and the kernel $P_\lambda$ are not unique since one may be multiplied by $g(\lambda)$ and the other by $\frac{1}{g(\lambda)}$ for any positive function.  We are not aware of any way to make a ``canonical" choice to eliminate this ambiguity.

We also observe that the measure $\mu$ does not have a discrete atom at $\lambda=6$.  In other words, there are no $\ell^2(\G)$ 6-eigenfunctions. 
Indeed, for a 3-regular graph, there exist 6-eigenfunctions if an only if the graph is bipartite, in which case the 6-eigenfunction alternates $\pm1$ on the two parts.  Since we are assuming $\G$ is infinite, this eigenfunction is not in $\ell^2(\G)$.

Let $\Go$ denote the edge graph of $\G$.  Then $\Go$ is 4-regular.  Let $\D_\Go$ denote its Laplacian.  Define \begin{equation}\label{4.8}\tilde{P_\lambda}(x,y)=\frac{1}{6-\lambda}\displaystyle\sum_{a\in x} \displaystyle\sum_{b\in y}P_\lambda(a,b)\end{equation} (there are 4 terms in the sum).
Let $E_6$ denote the space of 6-eigenfunctions in $\ell^2(\Go)$ (this may be 0) and write $\tilde{P_6}$ for the orthogonal projection of $\ell^2(\Go)$ onto $E_6$.
\begin{theorem}\label{thm4.1}
The spectral resolution of $-\Delta_\Go$ is given by 
\begin{equation}\label{4.9}F=\tilde{P_6}F+\int_\Sigma \tilde{P_\lambda}Fd\mu(\lambda)\end{equation}
where
\begin{equation}\label{4.10} -\D_\Go \tilde{P_\lambda} F = \lambda \tilde{P_\lambda}F\end{equation}
for $\mu-a.e.\lambda$, and
\begin{equation}\label{4.11} \tilde{P_\lambda}F(x)=\displaystyle\sum_{y\in\Go}\tilde{P_\lambda}(x,y)F(y).\end{equation}
In particular, $spect(-\D_\Go)=\Sigma$ (if $E_6=0$) or $\Sigma \cup \{6\}$.
\end{theorem}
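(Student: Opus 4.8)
The plan is to realize the passage from $\G$ to its edge graph $\Go$ through the vertex--edge incidence operator, and then to read off all three assertions of the theorem from the elementary duality between $BB^*$ and $B^*B$. First I would introduce the incidence operator $B:\ell^2(\Go)\to\ell^2(\G)$ given by $(BF)(a)=\sum_{x\ni a}F(x)$, i.e. $B(a,x)=1$ exactly when the vertex $a$ is an endpoint of the edge $x$; its adjoint $B^*:\ell^2(\G)\to\ell^2(\Go)$ is $(B^*f)(x)=\sum_{a\in x}f(a)$. Counting common endpoints gives the two classical line-graph identities $BB^*=3I+A_\G$ on $\ell^2(\G)$ and $B^*B=2I+A_{\Go}$ on $\ell^2(\Go)$, where $A_\G,A_{\Go}$ are the adjacency operators. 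With the degree conventions $-\D_\G=3I-A_\G$ and $-\D_{\Go}=4I-A_{\Go}$ (consistent with $\sigma(-\D_\G)\subseteq[0,6]$ and with the stated characterization of $6$-eigenfunctions of a $3$-regular graph), these become
\[
BB^*=6I-(-\D_\G),\qquad B^*B=6I-(-\D_{\Go}).
\]
The entire theorem is then a statement about the single operator $B$, which satisfies $\ker B^*=\{0\}$ (there are no $\ell^2(\G)$ $6$-eigenfunctions, as already observed) and $\ker B=E_6$ (since $B^*BF=6F\iff BF=0$).

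Next I would note that \eqref{4.8} says precisely $\tilde P_\lambda=\tfrac{1}{6-\lambda}B^*P_\lambda B$. From \eqref{4.4} we have $A_\G P_\lambda(\cdot,b)=(3-\lambda)P_\lambda(\cdot,b)$, hence $BB^*P_\lambda=(6-\lambda)P_\lambda$ in the first variable. Using $(6I-B^*B)B^*=B^*(6I-BB^*)$ I then compute
\[
-\D_{\Go}\tilde P_\lambda=\tfrac{1}{6-\lambda}(6I-B^*B)B^*P_\lambda B=\tfrac{1}{6-\lambda}B^*\bigl(6I-BB^*\bigr)P_\lambda B=\lambda\,\tilde P_\lambda,
\]
which is \eqref{4.10}; the normalizing factor $(6-\lambda)^{-1}$ is exactly what cancels $BB^*P_\lambda=(6-\lambda)P_\lambda$. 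Since $\tilde P_\lambda$ annihilates $\ker B=E_6$, the integral term in \eqref{4.9} does not see the $6$-eigenspace, and $\tilde P_6$ is the orthogonal projection onto $\ker B$, as required.

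To obtain the resolution \eqref{4.9} I would transfer the resolution of the identity $f=\int_\Sigma P_\lambda f\,d\mu$ on $\ell^2(\G)$ through $B^*(\,\cdot\,)B$. For $F\in\ell^2(\Go)$,
\[
\int_\Sigma \tilde P_\lambda F\,d\mu(\lambda)=B^*\!\int_\Sigma \tfrac{1}{6-\lambda}P_\lambda(BF)\,d\mu(\lambda)=B^*(BB^*)^{-1}BF,
\]
the functional calculus being applied to $-\D_\G$ via $BB^*=6I-(-\D_\G)$. Because $\ker B^*=\{0\}$, the operator $BB^*$ is injective, so $B^*(BB^*)^{-1}B$ is the orthogonal projection of $\ell^2(\Go)$ onto $\overline{\operatorname{ran}B^*}=(\ker B)^\perp=E_6^\perp$; thus the integral equals $F-\tilde P_6F$, giving \eqref{4.9}. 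The spectral statement follows from the same duality: for $s\neq0$, $B^*B-s$ is invertible iff $BB^*-s$ is, so $\sigma(-\D_{\Go})\setminus\{6\}=\sigma(-\D_\G)\setminus\{6\}=\Sigma\setminus\{6\}$, while $6\in\sigma(-\D_{\Go})$ iff $0\in\sigma(B^*B)$, i.e. iff $6\in\Sigma$ or $E_6\neq0$; this is exactly $\sigma(-\D_{\Go})=\Sigma$ (when $E_6=0$) or $\Sigma\cup\{6\}$.

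The main obstacle is the functional-calculus step $\int_\Sigma(6-\lambda)^{-1}P_\lambda(BF)\,d\mu=(BB^*)^{-1}BF$: the weight $(6-\lambda)^{-1}$ is unbounded at $\lambda=6$, and $\operatorname{ran}B$ need not be closed, so $(BB^*)^{-1}BF$ may not lie in $\ell^2(\G)$ in a naive pointwise sense. I would make this rigorous exactly as in the proof of Theorem~\ref{thm-main}, working weakly: pairing with a finitely supported $G\in\ell^2(\Go)$ reduces the claim to the scalar identity $\int_\Sigma(6-\lambda)^{-1}\langle P_\lambda BF,BG\rangle\,d\mu=\langle P_{E_6^\perp}F,G\rangle$, which follows from the spectral theorem for $-\D_\G$ once one knows that $\mu$ carries no atom at $6$ (already established), and one then extends to all $F\in\ell^2(\Go)$ by density together with the boundedness of $P_{E_6^\perp}$. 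The remaining bookkeeping --- the $\mu$-a.e. validity of \eqref{4.10} and the measurability of $\lambda\mapsto\tilde P_\lambda(x,y)$ --- is inherited directly from the corresponding properties of $P_\lambda(a,b)$ through the finite sums in \eqref{4.8}.
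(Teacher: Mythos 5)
Your proposal is correct and follows essentially the same route as the paper: your incidence operators $B$ and $B^*$ are precisely the paper's sum operators $S_2$ and $S_1$, your line-graph identities $BB^*=6I-(-\D_\G)$ and $B^*B=6I-(-\D_\Go)$ are Lemma~\ref{lem-4-1}, your intertwining computation for \eqref{4.10} is Lemma~\ref{lem-4-2}, and your identification of $\int_\Sigma \tilde{P_\lambda}F\,d\mu(\lambda)$ with the orthogonal projection onto $E_6^\perp$ is Lemma~\ref{lem4.4} together with the paper's proof of Theorem~\ref{thm4.1}. The only real difference is that you explicitly flag and repair the unboundedness of $(6-\lambda)^{-1}$ at $\lambda=6$ (equivalently, the possible non-closedness of $\operatorname{ran} S_1$ when $6$ lies in the continuous spectrum of $-\D_\G$), a point the paper passes over silently.
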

For the proof we require some lemmas.

Following \cite{St10} we define the sum operators $$S_1:\ell^2(\G)\to\ell^2(\G_0)$$ and 
$$S_2:\ell^2(\G_0)\to\ell^2(\G)$$ by 
\BEQ{e-4-1}{S_1f(x)=f(a)+f(b)\qquad\text{if $x$ is the edge $(a,b)$}} 
and 
\BEQ{e-4-2}{S_2F(a)=F(x)+F(y)+F(z)\qquad\text{if $x,y,z$ are the edges containing $a$}.} 

\BB{lemma}{lem-4-1}{$S_2S_1=6I+\D_\G$ and $S_1S_2=6I+\D_{\G_0}$. In particular, 
$S_2S_1$ is invertible, $S_1$ is one-to-one and $S_2$ is onto.}
\begin{proof}The formulas for $S_2S_1$ and $S_1S_2$ are simple computations. Since there are no 6-eigenfunctions in $\ell^2(\G)$, we obtain the invertability of $S_2S_1$  (see also \cite{St10}).\end{proof} 

It follows from Lemma~\ref{lem-4-1} that $E_6=(
\text{Im} S_1)^\perp$ and $\ell^2(\G_0)=\text{Im} S_1\oplus E_6$. 

\BB{lemma}{lem-4-2}{For any $\lam\neq6$, $-\D_\G f=\lam f$ 
if and only if $-\D_{\G_0}S_1f=\lam S_1f$. In particular, $sp(-\D_{\G_0})=sp(-\D_{\G})\cup\{6\}$.}

\begin{proof}
Suppose $-\D_\G f=\lam f$. Since $-\D_\G = 6I-S_2S_1$ we have $-S_2S_1f=(\lam-6)f$. Apply $S_1$ to this identity and use $-\D_{\G_0}=6I-S_1S_2$ to obtain 
$-\D_{\G_0}S_1f=\lam S_1f$. Similarly, we can reverse the implications. Note that the condition $\lam\neq6$ implies that $S_1f$ is not identically zero 
(see also \cite{St10}).\end{proof}

\begin{lemma}\label{lem4.4}
Let $F\in \ell^2(\Go)$ be orthogonal to $E_6$ (if $E_6$ is nontrivial).  Then $F=S_1f$ for \begin{equation}\label{4.14} f=(6I+\D_\G)^{-1}S_2F_1.\end{equation}
Moreover we have \begin{equation}\label{4.15} \tilde{P_\lambda}=\frac{1}{6-\lambda}S_1P_\lambda S_2. \end{equation}\end{lemma}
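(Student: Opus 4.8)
The plan is to establish the two assertions of Lemma~\ref{lem4.4} in turn, using the algebraic relations between $S_1$, $S_2$, $\D_\G$, and $\D_{\Go}$ collected in Lemma~\ref{lem-4-1}, together with the spectral kernel $P_\lambda$ from \eqref{4.4}--\eqref{4.5}. For the first statement, I would start from the decomposition $\ell^2(\Go)=\text{Im}\,S_1\oplus E_6$ noted just after Lemma~\ref{lem-4-1}. Since $F$ is orthogonal to $E_6$, it lies in $\text{Im}\,S_1$, so there exists $f$ with $F=S_1 f$; the content is to identify $f$ explicitly. The natural idea is to apply $S_2$ to $F=S_1f$ and use $S_2S_1=6I+\D_\G$ from Lemma~\ref{lem-4-1}, which gives $S_2F=(6I+\D_\G)f$. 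Because $S_2S_1$ is invertible (no $6$-eigenfunctions in $\ell^2(\G)$), the operator $6I+\D_\G=S_2S_1$ is invertible on $\text{Im}\,S_1$ preimages, and inverting yields $f=(6I+\D_\G)^{-1}S_2F$, which is exactly \eqref{4.14} (with $F_1$ read as $F$). I would verify that this $f$ reproduces $F$ under $S_1$: compute $S_1 f = S_1(6I+\D_\G)^{-1}S_2 F$ and check it equals $F$ using the intertwining $S_1(6I+\D_\G)=(6I+\D_{\Go})S_1$, which follows by comparing $S_1 S_2 S_1 = (6I+\D_{\Go})S_1$ from both factorizations in Lemma~\ref{lem-4-1}.

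For the kernel identity \eqref{4.15}, the strategy is to start from the definition \eqref{4.8} of $\tilde P_\lambda(x,y)$ and recognize the double sum over $a\in x$, $b\in y$ as precisely the matrix entries of $S_1 P_\lambda S_2$. Concretely, $(S_1 P_\lambda S_2)(x,y)$ should expand, via \eqref{e-4-1} and \eqref{e-4-2}, into $\sum_{a\in x}\sum_{b\in y}P_\lambda(a,b)$, since $S_1$ sums a kernel over the two endpoints of the edge $x$ on the left and $S_2$ sums over the three edges at a vertex on the right. The factor $\tfrac{1}{6-\lambda}$ in \eqref{4.8} matches the prefactor in \eqref{4.15}, so the identity becomes an entrywise comparison once the index sets are matched. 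I would be careful here about the distinction between $S_2$ acting as an operator $\ell^2(\Go)\to\ell^2(\G)$ and its transpose appearing on the right of the kernel composition, since in the sandwich $S_1 P_\lambda S_2$ the rightmost factor contributes through its kernel as a sum over edges at the vertex $y$.

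The main obstacle I anticipate is the bookkeeping around $E_6$ and the invertibility of $6I+\D_\G$. The operator $6I+\D_\G=S_2S_1$ is invertible on $\ell^2(\G)$ by Lemma~\ref{lem-4-1} since $\G$ is infinite and hence has no $\ell^2$ $6$-eigenfunctions, so the expression $(6I+\D_\G)^{-1}$ is unambiguous; but I must ensure that the composition $S_1(6I+\D_\G)^{-1}S_2$ applied to $F\perp E_6$ lands back on $F$ rather than merely on its projection, and this is exactly where orthogonality to $E_6=(\text{Im}\,S_1)^\perp$ is used. For the kernel statement, the subtlety is purely notational---matching the two endpoint/three-edge sums to the operator composition and tracking the $\tfrac{1}{6-\lambda}$ factor---but I would also note that \eqref{4.15} is an identity of kernels valid for $\mu$-a.e.\ $\lambda\neq 6$, consistent with \eqref{4.4}, so no genuine analytic difficulty arises beyond confirming that $P_\lambda$ is an honest $\lambda$-eigenkernel there.
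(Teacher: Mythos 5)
Your proposal is correct and follows essentially the same route as the paper: both parts rest on the factorizations $S_2S_1=6I+\D_\G$, $S_1S_2=6I+\D_\Go$ and the identification $E_6=(\mathrm{Im}\,S_1)^\perp=\ker S_2$ from Lemma~\ref{lem-4-1}, with the kernel identity \eqref{4.15} obtained by the same entrywise expansion of \eqref{4.8}. The only cosmetic difference is direction: the paper defines $f$ by \eqref{4.14} and concludes $S_1f=F$ from injectivity of $S_2$ on $E_6^\perp$, whereas you first place $F\in\mathrm{Im}\,S_1$ via the direct-sum decomposition and then solve for $f$; you also correctly read the typo $S_2F_1$ as $S_2F$.
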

\begin{proof}
For $f$ defined by \eqref{4.4} we have $S_2S_1f=S_2F$ by Lemma \ref{lem-4-1}.  Since $S_2$ is injective on $E^\perp_6$ and $S_1f\in E^\perp_6$ we conclude $S_1f=F$.  

By definition, $\tilde{P_\lambda}F(x)=\displaystyle\sum_{y\in\Go}\frac{1}{6-\lambda} \displaystyle\sum_{a\in x}\displaystyle\sum_{b\in y} P_\lambda(a,b)F(y)$ and this is equivalent to \eqref{4.15} by the definition of $S_1$ and $S_2$.
\end{proof}

\begin{proof}[Proof of Theorem \ref{thm4.1}]
It suffices to establish \eqref{4.9} for $F\in E^\perp_6$.  For $f$ defined by \eqref{4.14}, we apply $S_1$ to \eqref{4.6} to obtain $$F=\int S_1 P_\lambda f d\mu(\lambda)=\int S_1P_\lambda(6I+\D_G)^{-1}S_2Fd\mu=\int\frac{1}{6-\lambda}S_1P_\lambda S_2 F d\mu(\lambda)$$
since $P_\lambda(6I+\D_\G)^{-1}=\frac{1}{6-\lambda}P_\lambda$.  Then \eqref{4.9} follows by \eqref{4.15}.  We obtain \eqref{4.10} from \eqref{4.7} and Lemma \ref{lem-4-2}.
\end{proof}

In order to give an explicit form of the spectral resolution for any particular $\G$, we need to solve two problems:
\begin{itemize}
\item[(a)] Find an explicit formula for $P_\lambda(a,b)$;
\item[(b)] Give an explicit description of $E_6$ and the projection operator $\tilde{P_6}$.
\end{itemize}

In addition, there is one more problem we would like to solve in order to obtain an explicit Plancherel formula.  We can always write
\begin{equation}\label{4.16}
||f||^2_{\ell^2(\G)}=\int_\Sigma <P_\lambda f,f>d\mu(\lambda)
\end{equation}
and
\begin{equation}\label{4.17}
||F||^2_{\ell^2(\Go)}=||\tilde{P_6}F||^2_2+\int_\Sigma <\tilde{P_\lambda}F,F>d\mu(\lambda)
\end{equation}
for a reasonable dense space of functions $f$ and $F$ (certainly finitely supported functions will do).  What we would like is to replace $<P_\lambda f,f>$ and $<\tilde{P_\lambda}F,F>$ by expressions only involving $P_\lambda f$ and $\tilde{P_\lambda}F$ and some inner product on a space of $\lambda$-eigenfunctions.  Note that from \eqref{4.2} and the fact that $E_I$ is a projection operator we have 
\begin{equation}\label{4.18}
<P_\lambda f,f>=\lim_{\epsilon \to 0} \mu([\lambda-\epsilon, \lambda+\epsilon])||\frac{1}{\mu([\lambda-\epsilon, \lambda+\epsilon])}E_{[\lambda-\epsilon, \lambda+\epsilon]}f||^2_2
\end{equation}
for $\mu-a.e.\lambda$.  This suggests the following conjecture,
\begin{conjecture}\label{conj4.5}
For $\mu-a.e. \lambda$ there exists a Hilbert space of $\lambda$-eigenfunctions $\xi_\lambda$ with inner product $<,>_\lambda$ such that $P_\lambda f \in \xi_\lambda$ for $\mu-a.e.\lambda$ for every $f\in \ell^2(\G)$, and
\begin{equation}\label{4.19}
<P_\lambda f,f>=<P_\lambda f, P_\lambda f>_\lambda. \end{equation}
Moreover a similar statement holds for $<\tilde{P_\lambda} F,F>.$
\end{conjecture}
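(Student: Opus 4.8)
The plan is to realize $\xi_\lam$ as the reproducing kernel Hilbert space (RKHS) determined by the kernel $P_\lam(a,b)$ itself. The starting point is the observation that, for $\mu$-a.e. $\lam$, the kernel $(a,b)\mapsto P_\lam(a,b)$ is positive semidefinite on $\G\times\G$. Granting this, the Moore--Aronszajn theorem produces a unique Hilbert space $\xi_\lam$ of functions on $\G$, with inner product $\langle\,,\,\rangle_\lam$ satisfying the reproducing identity $\langle P_\lam(\cdot,a),P_\lam(\cdot,b)\rangle_\lam=P_\lam(a,b)$, in which the sections $P_\lam(\cdot,b)$ span a dense subspace. By \eqref{4.4} each section $P_\lam(\cdot,b)$ is a $\lam$-eigenfunction of $-\D_\G$; since norm convergence in an RKHS forces pointwise convergence and $-\D_\G$ is a local operator, the eigenfunction property survives passage to the closure, so every element of $\xi_\lam$ is a $\lam$-eigenfunction, as required.

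To establish positive semidefiniteness, fix a finite set $S\subset\G$ and scalars $\{c_a\}_{a\in S}$, and put $g=\sum_{a\in S}c_a\delta_a$. Since $E_I$ is an orthogonal projection, $\sum_{a,b\in S}c_a\overline{c_b}\,E_I(a,b)=\langle E_I g,g\rangle=\|E_I g\|_2^2\ge0$, so $(E_I(a,b))_{a,b\in S}$ is positive semidefinite for every interval $I$. Dividing by the positive scalar $\mu([\lam-\epsilon,\lam+\epsilon])$ preserves this, and passing to the Besicovitch limit \eqref{4.3} shows that $(P_\lam(a,b))_{a,b\in S}$ is positive semidefinite for $\mu$-a.e. $\lam$. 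Because $\G$ is countable it has only countably many finite subsets, so intersecting the corresponding full-measure sets yields a single set of full $\mu$-measure on which $P_\lam$ is positive semidefinite as a kernel on all of $\G\times\G$.

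With $\xi_\lam$ in hand, the identity \eqref{4.19} is immediate for finitely supported $f$: by \eqref{4.5}, $P_\lam f=\sum_b f(b)P_\lam(\cdot,b)\in\xi_\lam$ is a finite combination of kernel sections, and the reproducing property gives $\langle P_\lam f,P_\lam f\rangle_\lam=\sum_{a,b}f(a)\overline{f(b)}P_\lam(a,b)=\langle P_\lam f,f\rangle$, matching \eqref{4.18}. The remaining --- and genuinely delicate --- step is to extend this to arbitrary $f\in\ell^2(\G)$, i.e. to show $P_\lam f\in\xi_\lam$ for $\mu$-a.e. $\lam$ together with \eqref{4.19}. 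The strategy is to approximate $f$ by its truncations $f_n$ to an exhausting sequence of finite sets and apply the Plancherel identity \eqref{4.16} to the finitely supported differences, giving $\int_\Sigma\|P_\lam(f_n-f_m)\|_\lam^2\,d\mu(\lam)=\|f_n-f_m\|_2^2\to0$; hence $\{P_\lam f_n\}$ is Cauchy in $L^2(\Sigma,\mu;\xi_\lam)$. Passing to a subsequence converging $\mu$-a.e. in $\xi_\lam$ then defines $P_\lam f$ as an element of $\xi_\lam$ for a.e. $\lam$ and yields \eqref{4.19} by continuity of the inner product. I expect the main obstacle to be precisely this limiting argument: one must make sense of $\xi_\lam$ as a measurable field of Hilbert spaces over $\Sigma$, verify that $\lam\mapsto P_\lam f_n$ is a measurable section, and ensure the a.e.-convergent subsequence produces a bona fide element of $\xi_\lam$ rather than merely a pointwise limit on $\G$ --- the subtlety being that convergence in $\xi_\lam$ is strictly stronger than pointwise convergence.

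Finally, the analogous statement for $\tilde{P_\lam}F$ on $\Go$ should follow by transporting this structure through the sum operators of Lemma~\ref{lem-4-1}. Using \eqref{4.15}, namely $\tilde{P_\lam}=\frac{1}{6-\lam}S_1P_\lam S_2$, one defines the inner product on the target by pushing forward $\langle\,,\,\rangle_\lam$ along $S_1$, which is injective with closed range $E_6^\perp$ by Lemma~\ref{lem-4-1}, and the identity for $\langle\tilde{P_\lam}F,F\rangle$ reduces to \eqref{4.19} for $f=(6I+\D_\G)^{-1}S_2F$ as in Lemma~\ref{lem4.4}, up to the scalar factor $\tfrac{1}{6-\lam}$ which is bounded on $\Sigma\subset[0,6)$ away from $6$.
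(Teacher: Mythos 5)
The first thing to note is that the paper does not prove this statement: it is posed as Conjecture~\ref{conj4.5} and left open, and it is verified only for the $3$-regular tree, by the explicit route of Theorem~\ref{thmV5} and Corollary~\ref{corV6} (and Theorem~\ref{thmV8}, Corollary~\ref{corV9} on $\Gamma_0$), where $\xi_\lambda$ is realized concretely as the $\lambda$-eigenfunctions with finite modified-mean norm and $\langle\cdot,\cdot\rangle_\lambda=12\,b(\lambda)^{-1}\langle\cdot,\cdot\rangle_M$. So your proposal is necessarily a different route, and its core is sound: the positive-semidefiniteness argument (projections give PSD matrices on finite subsets, division by $\mu([\lambda-\epsilon,\lambda+\epsilon])$ and the Besicovitch limit \eqref{4.3} preserve this, and countability of $\Gamma$ lets you intersect the full-measure sets) is complete and correct; Moore--Aronszajn then produces $\xi_\lambda$; since $-\Delta_\Gamma$ is a finite-difference operator and RKHS convergence implies pointwise convergence, every element of $\xi_\lambda$ is a genuine $\lambda$-eigenfunction; and the reproducing property proves \eqref{4.19} for finitely supported $f$. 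That finite-support case is a complete proof, and it buys something the paper does not attempt: abstract existence for an arbitrary $3$-regular $\Gamma$. What it does not buy is the explicit description that problem (c) actually demands --- indeed, the content of Theorem~\ref{thmV5} is precisely the identification of your abstract RKHS norm with the concrete norm $12\,b(\lambda)^{-1}\langle\cdot,\cdot\rangle_M$, so the two approaches are complementary.

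Two gaps remain before this settles the conjecture as stated, both at steps you flagged but did not close. First, for general $f\in\ell^2(\Gamma)$ the phrase ``by continuity of the inner product'' conceals the main difficulty: since $P_\lambda f\notin\ell^2(\Gamma)$, the left side $\langle P_\lambda f,f\rangle$ is not an absolutely convergent pairing and must itself be defined through \eqref{4.18}, i.e.\ as the Radon--Nikodym derivative of $I\mapsto\langle E_I f,f\rangle$ with respect to $\mu$. To pass from $\langle P_\lambda f_n,f_n\rangle=\|P_\lambda f_n\|_\lambda^2$ to the limit you must show these derivatives converge along your truncations, say in $L^1(\mu)$ and hence a.e.\ on a further subsequence; this is true --- write $\langle E_I f_n,f_n\rangle-\langle E_I f,f\rangle=\langle E_I(f_n-f),f_n\rangle+\langle E_I f,f_n-f\rangle$ and use $|\langle E_I g,h\rangle|\le\|E_Ig\|_2\,\|E_Ih\|_2$ over disjoint intervals to bound the total variation by $\|f_n-f\|_2(\|f_n\|_2+\|f\|_2)$ --- but no such argument appears in your sketch, and the Dixmier measurable-field structure you correctly identify as necessary is likewise only asserted. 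Second, the transfer to $\Gamma_0$ is not the plain pushforward along $S_1$: using $S_1^*=S_2$ and \eqref{4.15} one gets $\langle\tilde{P_\lambda}F,F\rangle=\frac{1}{6-\lambda}\langle P_\lambda S_2F,S_2F\rangle$, while $\tilde{P_\lambda}F=S_1\bigl(\frac{1}{6-\lambda}P_\lambda S_2F\bigr)$, so the identity forces the scaled definition $\langle S_1u,S_1v\rangle_{\tilde{\lambda}}:=(6-\lambda)\langle u,v\rangle_\lambda$; your ``up to the scalar factor $\frac{1}{6-\lambda}$'' leaves this factor undetermined, and the unscaled pushforward fails \eqref{4.19} by exactly this factor. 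Moreover, the injectivity of $S_1$ needed for well-definedness must be proved on pointwise $\lambda$-eigenfunctions --- it follows from the pointwise identity $S_2S_1=6I+\Delta_\Gamma$, since $S_1u=0$ forces $(6-\lambda)u=0$ --- and cannot be quoted from Lemma~\ref{lem-4-1}, whose injectivity and range statements concern $\ell^2(\Gamma)$, which your eigenfunctions do not inhabit. Both gaps appear fillable, so the approach is viable; as written, however, the proof is incomplete.
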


Our last problem is then
\begin{itemize}
\item[(c)] Find an explicit description of $\xi_\lambda$ and its inner product, and transfer this to $\tilde{\xi_\lambda}$ of $\Go$.
\end{itemize}

\section{The Tree Fractafold}\label{sec5}

In this section we study in detail the spectrum of the \Lp\ on the tree fractafold TSG 
(Figure~\ref{fig-sig-tree}
)\begin{figure}[hbt]\centering
\includegraphics{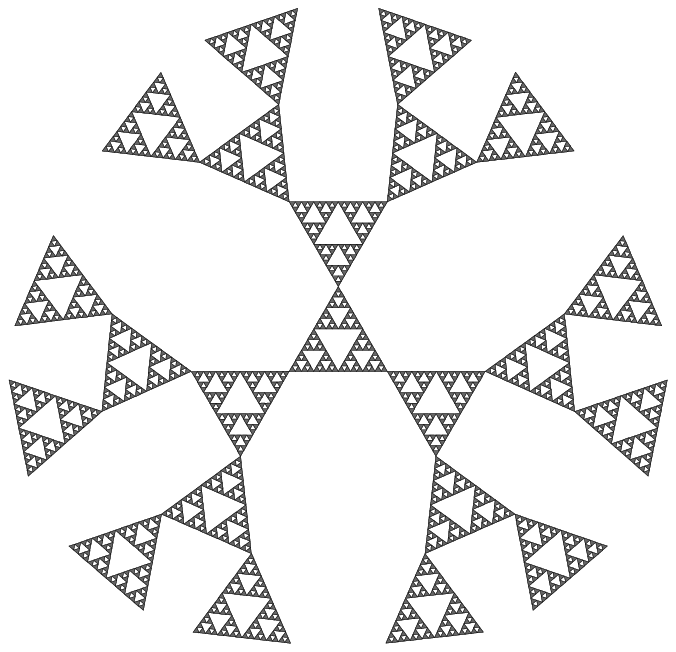}
\caption{A part of the infinite \Sif\ based on the binary tree.}\label{fig-sig-tree}
\end{figure}
 whose cell graph $\G$ is the 3-regular tree. In a sense this example is the ``universal covering space'' of all the other examples, if we ``fill in'' all copies of SG with triangles.

We begin by solving problem (b).

\begin{lemma}\label{lem5.1}
For any fixed $z$ in $\Go$ define 
\begin{equation*} F_z(x)=\frac{1}{\sqrt{3}}(-\frac{1}{2})^{d(x,z)}.\end{equation*}
Then $F_z\in\ell^2(\Go)$ with $||F_z||_{\ell^2(\Go)}=1$ and $F_z\in E_6$.  \end{lemma}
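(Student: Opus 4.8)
The plan is to verify the two assertions separately, both reducing to the combinatorial geometry of $\Go$, the edge (line) graph of the $3$-regular tree $\G$. Recall that the vertices of $\Go$ are the edges of $\G$, that each vertex $a$ of $\G$ gives a triangle $T_a$ in $\Go$ (its three incident edges), and that each vertex of $\Go$ lies in exactly two such triangles.

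First I would recast membership in $E_6$ in the cleanest possible form. Since $S_1^\ast=S_2$ (a one-line computation from \eqref{e-4-1}--\eqref{e-4-2}) and $E_6=(\mathrm{Im}\,S_1)^\perp$ as noted after Lemma~\ref{lem-4-1}, we have $E_6=\ker S_2$; equivalently, using $S_1S_2=6I+\D_\Go$ and the injectivity of $S_1$ from Lemma~\ref{lem-4-1}, $-\D_\Go F=6F \iff S_1S_2F=0 \iff S_2F=0$. Thus $F\in E_6$ precisely when $F\in\ell^2(\Go)$ and, for every vertex $a$ of $\G$, the sum of $F$ over the three vertices of the triangle $T_a$ vanishes.

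The geometric heart is the claim that for each vertex $a$ of $\G$ the three vertices of $T_a$ have $\Go$-distances to $z$ of the form $\{d-1,d,d\}$ for some $d\ge1$. I would prove this through the line-graph distance formula $d_\Go(e,e')=\delta(e,e')+1$ for $e\ne e'$, where $\delta$ is the minimal $\G$-distance between an endpoint of $e$ and an endpoint of $e'$; because $\G$ is a tree, each triangle $T_a$ has a unique edge $e^\ast$ closest to $z$ (the one pointing toward $z$, or $z$ itself when $a$ is an endpoint of $z$), and the remaining two edges point away and are therefore each exactly one farther. Granting this, $S_2F_z(a)=\tfrac{1}{\sqrt3}\big((-\tfrac12)^{d-1}+2(-\tfrac12)^{d}\big)=\tfrac{1}{\sqrt3}(-\tfrac12)^{d-1}\big(1+2\cdot(-\tfrac12)\big)=0$, so $S_2F_z=0$ and $F_z\in E_6$ once square-summability is known.

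For the norm I would count spheres. The same distance analysis shows each $x$ at distance $d\ge1$ has exactly one neighbor at distance $d-1$, one at distance $d$, and two at distance $d+1$, so if $N_d$ denotes the number of vertices at distance $d$ then $N_0=1$, $N_1=4$, and $N_{d+1}=2N_d$ for $d\ge1$, giving $N_d=2^{d+1}$ for $d\ge1$. Hence
$$\|F_z\|_{\ell^2(\Go)}^2=\tfrac13\Big(1+\sum_{d\ge1}2^{d+1}4^{-d}\Big)=\tfrac13\big(1+2\big)=1,$$
which also establishes $F_z\in\ell^2(\Go)$. I expect the main obstacle to be establishing the distance and sphere structure of $\Go$ rigorously — that is, justifying the $\{d-1,d,d\}$ pattern and the doubling $N_{d+1}=2N_d$ — since everything else is an elementary geometric-series computation; the tree structure of $\G$ is exactly what makes these facts clean.
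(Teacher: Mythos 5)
Your proof is correct, but it takes a genuinely different route to the eigenfunction claim than the paper does. The paper verifies the eigenvalue equation $-\D_\Go F_z=6F_z$ pointwise: at $z$ itself (all four neighbors at distance $1$), and at $x\neq z$ using exactly the neighbor pattern you identify (one neighbor at distance $d-1$, one at $d$, two at $d+1$), so the computation is the five-term identity $4-(-2+1-1)=6$. You instead invoke Lemma~\ref{lem-4-1} and the adjointness $S_1^\ast=S_2$ to reduce membership in $E_6$ to the kernel condition $S_2F_z=0$, and then check the three-term triangle sums via the $\{d-1,d,d\}$ distance pattern, where the cancellation $1+2\cdot(-\tfrac12)=0$ is immediate. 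Both arguments rest on the same underlying geometry of the line graph of the $3$-regular tree (your line-graph distance formula $d_\Go(e,e')=\delta(e,e')+1$ and the paper's neighbor permutation are the same fact packaged differently), and the norm computations are identical: sphere sizes $N_0=1$, $N_d=2^{d+1}$ for $d\geq1$, then a geometric series. What your route buys is a cleaner algebraic criterion --- the triangle condition $S_2F=0$ is exactly the characterization the paper itself uses later for the honeycomb fractafold ($E_6=\ker(S_2)$ in the compact-support lemma of Section~\ref{sec6}), so your proof unifies the two examples; what the paper's route buys is independence from the operator formalism, since it needs nothing beyond the definition of $\D_\Go$. One small remark: your derivation of the neighbor pattern from the two triangles containing $x$, and the edge-counting argument $2N_d=N_{d+1}$, are exactly the points the paper leaves implicit, so your write-up is, if anything, more complete on the geometric side.
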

\begin{proof} Note that $z$ has 4 neighbors $\{y_1, y_2, y_3, y_4\}$ in $\Go$ with $d(y_j,z)=1$, so
\begin{eqnarray*}
-\D_\Go F_z(z)&=&4F_z(z)-\displaystyle\sum_{j=1}^4F(y_j)\\
&=&\frac{1}{\sqrt{3}}(4(-\frac{1}{2})^0-4(-\frac{1}{2})^1)=\frac{6}{\sqrt{3}}=6F_z(z)
\end{eqnarray*}
verifying the 6-eigenvalue equation at $z$.

On the other hand, if $x\neq z$ then the 4 neighbors $\{y_1, y_2, y_3, y_4\}$ of $x$ may be permuted so that $d(y_1,z)=d(x,z)-1$, $d(y_2,z)=d(x,z)$, and $d(y_3,z)=d(y_4,z)=d(y,z)+1$.  It follows that
\begin{eqnarray*}
-\D_\Go F_z(x)=4F_z(x)-\displaystyle\sum_{j=1}^4F_z(y_j)=F_z(x)(4-(-2+1-2 \cdot\frac{1}{2}))=6F_z(x)
\end{eqnarray*}
verifying the 6-eigenvalue equation at $x$.  Finally
\begin{eqnarray*}
||F_z||^2_{\ell^2(\Go)}=\frac{1}{3}(1+4\cdot(\frac{1}{2})^2+8\cdot(\frac{1}{4})^2+\ldots)=\frac{1}{3}(1+1+\frac{1}{2}+\frac{1}{4}+\ldots)=1
\end{eqnarray*}
(See Figure \ref{fig5.1}).
\end{proof}

\begin{figure}[htb]
\begin{center}
\begin{picture}(150,100)(-75,-55)\thicklines
\setlength{\unitlength}{6pt}
\put(-8,-8){\line(1,1){16}}
\put(-8,-8){\line(-1,1){4}}
\put(-12,-4){\line(1,0){24}}
\put(8,-8){\line(-1,1){16}}
\put(-12,4){\line(1,0){24}}
\put(-12,4){\line(1,1){4}}
\put(8,-8){\line(1,1){4}}
\put(12,4){\line(-1,1){4}}
\put(1.3,-.3){$1$}
\put(-6.5,2){$-\frac{1}{2}$}
\put(4,2){$-\frac{1}{2}$}
\put(-6.5,-2.5){$-\frac{1}{2}$}
\put(4,-3){$-\frac{1}{2}$}
\put(-14,4){$\frac{1}{4}$}
\put(12,4){$\frac{1}{4}$}
\put(-14,-4){$\frac{1}{4}$}
\put(12,-4){$\frac{1}{4}$}
\put(-8,9){$\frac{1}{4}$}
\put(8,9){$\frac{1}{4}$}
\put(-8,-10){$\frac{1}{4}$}
\put(8,-10){$\frac{1}{4}$}
\end{picture}
\end{center}
\caption{Values of $\sqrt{3}F_z$ (the center point is $z$).}\label{fig5.1}\end{figure}
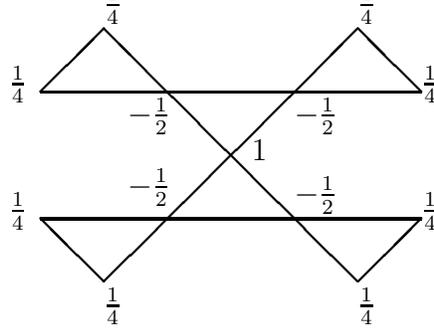

\begin{remark}
It is easy to see from the 6-eigenvalue equation that $F_z$ is the unique (up to a constant multiple) function in $E_6$ that is radial about $z$ (a function of $d(x,z)$).
\end{remark}
\begin{lemma}\label{lem5.3}
$\displaystyle\sum_xF_z(x)F_y(x)=\sqrt{3}F_z(y)$.
\end{lemma}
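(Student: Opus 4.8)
The cleanest route is to identify $F_y$ with a column of the reproducing kernel of the orthogonal projection $\tilde P_6$ onto $E_6$, after which the claimed identity is just the statement that $\tilde P_6$ fixes $F_y$. Concretely, I would first show that the kernel of $\tilde P_6$ is
\[
\tilde P_6(x,y)=\tfrac{1}{\sqrt3}F_y(x)=\tfrac13\left(-\tfrac12\right)^{d(x,y)},
\]
and then invoke the reproducing property $\tilde P_6 F_y=F_y$, which holds because $F_y\in E_6$ by Lemma~\ref{lem5.1}.

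To establish the formula for the kernel, fix $y$ and set $g:=\tilde P_6\delta_y\in E_6$. Since the orthogonal projection onto the $6$-eigenspace is canonically attached to the graph, it commutes with every automorphism of $\Go$; the automorphisms fixing the vertex $y$ act transitively on each sphere $\{x:d(x,y)=r\}$ (recall $\Go$ is the ``tree of triangles'' obtained as the edge graph of the $3$-regular tree, which is spherically homogeneous about any vertex), so $g$ is radial about $y$. It is nonzero, since $\langle g,F_y\rangle=\langle\delta_y,\tilde P_6F_y\rangle=\langle\delta_y,F_y\rangle=F_y(y)=\tfrac1{\sqrt3}\neq0$. By the Remark following Lemma~\ref{lem5.1}, the radial functions in $E_6$ about $y$ are exactly the multiples of $F_y$, so $g=cF_y$ for some constant $c$. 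Evaluating at $y$ gives $\tilde P_6(y,y)=g(y)=c/\sqrt3$, while self-adjointness and idempotency give $\tilde P_6(y,y)=\langle\tilde P_6\delta_y,\delta_y\rangle=\langle\tilde P_6\delta_y,\tilde P_6\delta_y\rangle=\|g\|^2=c^2\|F_y\|^2=c^2$; hence $c=1/\sqrt3$, and the displayed kernel formula follows (using $F_y(x)=F_x(y)$).

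Finally, since $F_y\in E_6$ we have $(\tilde P_6F_y)(z)=F_y(z)$ for every $z$, that is,
\[
F_z(y)=F_y(z)=\sum_{x\in\Go}\tilde P_6(z,x)F_y(x)=\frac{1}{\sqrt3}\sum_{x}F_z(x)F_y(x),
\]
which rearranges to the assertion $\sum_x F_z(x)F_y(x)=\sqrt3\,F_z(y)$ (the sum converges absolutely by Cauchy--Schwarz, as $F_z,F_y\in\ell^2(\Go)$). The main obstacle is the justification, in the middle paragraph, that $g=\tilde P_6\delta_y$ is radial, i.e.\ that the stabilizer of $y$ in $\mathrm{Aut}(\Go)$ is transitive on distance spheres; once radiality is granted, the uniqueness Remark and the normalization computation are routine. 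An alternative, purely combinatorial proof would expand $\sum_x(-\tfrac12)^{d(x,z)+d(x,y)}$ directly by grouping vertices according to the geodesic from $z$ to $y$, but the triangular blocks of $\Go$ make the distance bookkeeping (a vertex at distance $r$ from a base point has one neighbor at $r-1$, one at $r$, and two at $r+1$) noticeably more delicate than on a genuine tree, so I would prefer the kernel argument.
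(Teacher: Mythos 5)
Your proof is correct, but it runs in the opposite logical direction from the paper's, so it is worth comparing the two. The paper proves the lemma directly: fixing $z$, the left-hand side, viewed as a function of $y$, is a $6$-eigenfunction (since $F_y(x)=F_x(y)$, it is a convergent combination of the eigenfunctions $F_x$) and is radial about $z$; by the Remark following Lemma~\ref{lem5.1} it must equal $cF_z(y)$, and setting $y=z$ gives $c=\sqrt{3}$ because $\|F_z\|_{\ell^2(\Go)}=1$. Only afterwards does the paper introduce the kernel $\frac{1}{\sqrt{3}}F_x(y)$ (Definition~\ref{defn5.4}) and prove, via a radialization/spanning argument, that it gives the orthogonal projection onto $E_6$ (Theorem~\ref{thm5.1}). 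You invert this order: you first identify the kernel of the abstract spectral projection $\tilde P_6$ --- using equivariance of the canonical projection under the vertex stabilizer in $\mathrm{Aut}(\Go)$, the same uniqueness Remark, and a normalization from self-adjointness and idempotency --- and then read off the lemma as the reproducing identity $\tilde P_6F_y=F_y$. Both arguments rest on the same two pillars (uniqueness of radial $6$-eigenfunctions and a normalization), and both need the fact that the stabilizer of a vertex acts transitively on spheres, which the paper leaves implicit in the words ``is radial about $z$'' and you justify via tree automorphisms. What your route buys: it establishes the content of Definition~\ref{defn5.4} and Theorem~\ref{thm5.1} as a byproduct, with no need for the paper's separate spanning argument. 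What it costs: it is longer and leans on operator-theoretic facts where a three-line computation suffices; also your idempotency step is redundant, since $\langle \tilde P_6\delta_y,F_y\rangle=1/\sqrt{3}$ combined with $g=cF_y$ and $\|F_y\|=1$ already forces $c=1/\sqrt{3}$.
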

\begin{proof} Fix $z$.  Then the left side is a 6-eigenfunction of $y$ and is radial about $z$, so it must be a constant multiple of $F_z(y)$.  To compute the constant set $y=z$, and the left side is $1$ while $F_z(z)=\frac{1}{\sqrt{3}}$. \end{proof}

\begin{definition}\label{defn5.4}
Let $\tilde{P_6}(x,y)=\frac{1}{\sqrt{3}}F_x(y)=\frac{1}{3}(-\frac{1}{2})^{d(x,y)}$ and define the operator 
\begin{equation}
\tilde{P_6}F(x)=\displaystyle\sum_y\tilde{P_6}(x,y)F(y). \end{equation} \end{definition}

\begin{theorem}\label{thm5.1}
$\tilde{P_6}$ is the orthogonal projection $\ell^2(\Go)\to E_6$. \end{theorem}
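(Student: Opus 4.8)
The plan is to verify that the operator $\tilde{P_6}$ defined in Definition~\ref{defn5.4} satisfies the three defining properties of the orthogonal projection onto $E_6$: its range lies in $E_6$, it acts as the identity on $E_6$, and it is self-adjoint (equivalently, idempotent and symmetric). First I would check that for each fixed $x$, the kernel section $y \mapsto \tilde{P_6}(x,y) = \frac{1}{\sqrt 3}F_x(y)$ is itself a $6$-eigenfunction in $\ell^2(\Go)$; this is immediate from Lemma~\ref{lem5.1}, which already shows $F_x \in E_6$ with unit $\ell^2$ norm. Consequently $\tilde{P_6}F = \sum_y \tilde{P_6}(\cdot,y)F(y)$ is a (convergent) combination of functions in $E_6$, so $\tilde{P_6}$ maps into $E_6$ (after justifying that the sum stays in $E_6$, using that $E_6$ is closed).

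The crucial computation is idempotency, $\tilde{P_6}^2 = \tilde{P_6}$, and here Lemma~\ref{lem5.3} does the real work. Writing out $\tilde{P_6}^2 F(x) = \sum_w \tilde{P_6}(x,w)\big(\sum_y \tilde{P_6}(w,y)F(y)\big)$ and interchanging the order of summation, the inner kernel becomes $\sum_w \tilde{P_6}(x,w)\tilde{P_6}(w,y) = \frac{1}{3}\sum_w F_x(w)F_y(w)$, which by Lemma~\ref{lem5.3} equals $\frac{1}{3}\sqrt{3}F_x(y) = \frac{1}{\sqrt 3}F_x(y) = \tilde{P_6}(x,y)$. Thus the kernel reproduces itself and $\tilde{P_6}^2 = \tilde{P_6}$. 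Symmetry of the kernel is manifest, since $\tilde{P_6}(x,y) = \frac{1}{3}(-\frac12)^{d(x,y)} = \tilde{P_6}(y,x)$ as the graph distance is symmetric, so $\tilde{P_6}$ is self-adjoint on $\ell^2(\Go)$. An idempotent, self-adjoint bounded operator is an orthogonal projection, and its range is contained in $E_6$; conversely, applying the reproducing identity of Lemma~\ref{lem5.3} once more shows $\tilde{P_6}F = F$ for $F = F_z$, hence for all of $E_6$ by density of the span of the $F_z$.

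I would expect two points to need genuine care. The first, and the main obstacle, is establishing that the span of $\{F_z : z \in \Go\}$ is dense in $E_6$, so that showing $\tilde{P_6}$ is the identity on each $F_z$ actually pins it down as the identity on all of $E_6$; here one must argue that any $F \in E_6$ orthogonal to every $F_z$ is zero, which follows because $\langle F, F_z\rangle = 0$ for all $z$ forces $\tilde{P_6}F = 0$, combined with the fact that $\tilde{P_6}$ restricted to $E_6$ is already known to fix $E_6$. The second point is the interchange of summation in the idempotency computation and the convergence of all the relevant series in $\ell^2(\Go)$; since the $3$-regular tree has exponential volume growth but the kernel decays like $(1/2)^{d(x,y)}$ against growth $\sim 2^{d}$, the sums are exactly at the borderline and must be handled by absolute convergence estimates analogous to the norm computation $\frac{1}{3}(1 + 1 + \frac12 + \cdots) = 1$ appearing in Lemma~\ref{lem5.1}. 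Once boundedness of $\tilde{P_6}$ on $\ell^2(\Go)$ is secured, Fubini-type interchanges are justified and the algebraic identities above complete the proof.
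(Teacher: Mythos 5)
Your operator-theoretic framing---symmetric kernel, idempotency via Lemma \ref{lem5.3}, range contained in $E_6$, hence an orthogonal projection onto its range---is sound as far as it goes, and the idempotency computation $\sum_w \tilde{P_6}(x,w)\tilde{P_6}(w,y)=\tilde{P_6}(x,y)$ is exactly right. But the proof has a genuine gap at precisely the point you yourself call ``the main obstacle'': density of $\mathrm{span}\{F_z\}$ in $E_6$, and your proposed resolution of it is circular. What your argument establishes is only that $\tilde{P_6}$ fixes each $F_z$, hence fixes the \emph{closed span} $V$ of $\{F_z\}$; together with self-adjointness and idempotency this identifies $\tilde{P_6}$ as the orthogonal projection onto $V\subseteq E_6$, nothing more. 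To conclude $V=E_6$ you invoke ``the fact that $\tilde{P_6}$ restricted to $E_6$ is already known to fix $E_6$''---but that fact is not known at this stage; it is \emph{equivalent} to the density you are trying to prove. If $F\in E_6\ominus V$, your chain of implications yields only $\tilde{P_6}F=0$, which is no contradiction. The paper closes this gap with an argument absent from your proposal: radialize $F$ about $z$. The radialization $\tilde F$ is still an $\ell^2$ $6$-eigenfunction (averaging over the stabilizer of $z$ in the automorphism group of the tree preserves $E_6$), it is still orthogonal to the radial function $F_z$, and by the Remark following Lemma \ref{lem5.1} any radial element of $E_6$ is a scalar multiple of $F_z$; hence $\tilde F=0$, and since $\tilde F(z)=F(z)$ this forces $F(z)=0$ for every $z$, i.e.\ $F\equiv 0$. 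This symmetry/uniqueness argument (or some substitute for it) is the real content of the theorem; without it you have only proved that $\tilde{P_6}$ is the projection onto $V$.

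A secondary, fixable, problem: your plan to handle boundedness and the Fubini interchanges ``by absolute convergence estimates'' cannot work as stated, because the kernel is \emph{not} absolutely summable along rows. The sphere of radius $n$ about $x$ in $\Go$ contains $4\cdot 2^{n-1}$ points, so $\sum_y |\tilde{P_6}(x,y)|=\tfrac13\bigl(1+\sum_{n\ge1}2^{n+1}2^{-n}\bigr)$ diverges (each shell contributes the constant $\tfrac23$); the computation in Lemma \ref{lem5.1} converges only because it involves the \emph{squares} of the kernel values. What is true is that each row equals $F_x/\sqrt3\in\ell^2(\Go)$, so $\tilde{P_6}F(x)=\tfrac{1}{\sqrt3}\langle F,F_x\rangle$ is well defined pointwise by Cauchy--Schwarz. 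One can then get operator boundedness by proving $\|\tilde{P_6}F\|_2^2=\langle \tilde{P_6}F,F\rangle\le\|\tilde{P_6}F\|_2\|F\|_2$ for finitely supported $F$ (the interchange being legitimate there because the outer sums are finite and the inner sum over $x$ is an inner product of $\ell^2$ rows, i.e.\ Lemma \ref{lem5.3}), and extending by density. Note that the paper's route sidesteps operator norms entirely: pointwise continuity of $F\mapsto\langle F,F_x\rangle$ already suffices to pass from $\tilde{P_6}F_z=F_z$ on the span to the identity on its closure, and $\tilde{P_6}F=0$ for $F\perp E_6$ is immediate since each $F_x\in E_6$.
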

\begin{proof}
Lemma \ref{lem5.3} shows $\tilde{P_6}F_z=F_z$.  Now we claim that the functions $F_z$ span $E_6$.  Indeed, if $F$ is in $E_6$ and is orthogonal to $F_z$, then we can radialize $F$ about $z$ to obtain a function $\tilde{F}$ that is still in $E_6$ and orthogonal to $F_z$.  Since $\tilde{F}$ must be a multiple of $F_z$ it follows that it is identically zero.  Since $\tilde{F}(z)=F(z)$ it follows that $F(z)=0$.  Since this holds for every $z$, we have shown that the orthogonal complement of the span of $F_z$ is zero.  This shows $\tilde{P_6}$ is the identity on $E_6$.  Also $\tilde{P_6}E^\perp_6=0$ by the orthogonality of different parts of the spectrum.
\end{proof}

Note that $\{F_z\}$ is not an orthonormal basis of $E_6$, since $<F_z,F_y>=\sqrt{3}F_z(y)$ by Lemma \ref{lem5.3}.
The next result shows that it is a tight frame.
\begin{theorem}\label{thm5.2}
For any $F\in E_6$ we have 
\begin{equation}
\displaystyle\sum_z|<F,F_z>|^2=3||F||^2_{\ell^2(\Go)}  \end{equation}
\end{theorem}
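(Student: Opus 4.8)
The plan is to reduce the claimed tight-frame identity to the trivial Parseval identity in $\ell^2(\Go)$, by first showing that the frame coefficients $\langle F, F_z\rangle$ are, up to a universal constant, nothing but the point values of $F$. Concretely, I would first establish the reproducing identity
\[
\langle F, F_z\rangle = \sqrt{3}\,F(z)\qquad\text{for every } F\in E_6,
\]
after which the theorem is immediate, since
\[
\sum_z |\langle F,F_z\rangle|^2 = 3\sum_z |F(z)|^2 = 3\,\|F\|^2_{\ell^2(\Go)}.
\]

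To prove the reproducing identity, I would exploit the kernel description of the projection. By Definition~\ref{defn5.4} the projection kernel satisfies $\tilde{P_6}(z,x)=\tfrac{1}{\sqrt3}F_z(x)$, so that $F_z(x)=\sqrt3\,\tilde{P_6}(z,x)$. Substituting this into $\langle F, F_z\rangle=\sum_x F(x)F_z(x)$ gives
\[
\langle F, F_z\rangle = \sqrt3\sum_x \tilde{P_6}(z,x)F(x)=\sqrt3\,\bigl(\tilde{P_6}F\bigr)(z).
\]
Now I would invoke Theorem~\ref{thm5.1}, which identifies $\tilde{P_6}$ as the orthogonal projection of $\ell^2(\Go)$ onto $E_6$. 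Since $F\in E_6$, this gives $\tilde{P_6}F=F$, hence $\bigl(\tilde{P_6}F\bigr)(z)=F(z)$, and the reproducing identity $\langle F,F_z\rangle=\sqrt3\,F(z)$ follows at once. (Because the functions involved are real-valued, the modulus in the statement causes no difficulty.)

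With the reproducing identity established, the conclusion is just the elementary summation over $z\in\Go$ displayed above. I do not expect any genuine obstacle in this final step: all of the real work has been front-loaded into the earlier results, in particular the computation of $\tilde{P_6}$ as the projection onto $E_6$ (Theorem~\ref{thm5.1}), which in turn rested on the self-reproducing property of the $F_z$ from Lemma~\ref{lem5.3}. The only point worth stating carefully is that the factor $\sqrt3$ appearing here is exactly the redundancy constant of the frame, reflecting that $\{F_z\}$ is tight but not orthonormal (its Gram matrix $\langle F_z,F_y\rangle=\sqrt3\,F_z(y)$ is $\sqrt3$ times the projection kernel).
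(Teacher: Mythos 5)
Your proof is correct, and it takes a genuinely different route from the paper's. The paper proves the identity by expanding $F=\sum_y a(y)F_y$ in the overcomplete family $\{F_y\}$ (using the spanning fact established inside the proof of Theorem~\ref{thm5.1}) and then computing both $\|F\|^2_{\ell^2(\Go)}$ and $\sum_z|\langle F,F_z\rangle|^2$ through the Gram relations of Lemma~\ref{lem5.3}, matching the two triple sums. You instead use Theorem~\ref{thm5.1} as a black box: since $\tilde{P_6}$ is the orthogonal projection with kernel $\tilde{P_6}(z,x)=\frac{1}{\sqrt3}F_z(x)$, the frame coefficient is $\langle F,F_z\rangle=\sqrt3\,(\tilde{P_6}F)(z)=\sqrt3\,F(z)$ for $F\in E_6$, and the tight-frame identity becomes literally the definition of the $\ell^2$ norm. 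Your route is shorter and, arguably, more rigorous: the paper's expansion $F=\sum_y a(y)F_y$ for an arbitrary $F\in E_6$ is only justified for finite combinations (the spanning argument gives density, not a pointwise representation), so its triple-sum manipulation implicitly needs a limiting argument, whereas your reproducing identity holds verbatim for every $F\in E_6$ and makes the reproducing-kernel structure of $E_6$ explicit. What the paper's proof buys in exchange is self-containedness at the level of the frame family: it uses only Lemma~\ref{lem5.3}, not the full projection theorem. One small slip in your closing remark: since $\langle F_z,F_y\rangle=\sqrt3\,F_z(y)$ and $\tilde{P_6}(z,y)=\frac{1}{\sqrt3}F_z(y)$, the Gram matrix is $3$ (not $\sqrt3$) times the projection kernel; this does not affect your argument.
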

\begin{proof}
We may write $F=\displaystyle\sum_ya(y)F_y$.  Then $||F||^2_{\ell^2(\Go)}=\displaystyle\sum_y\displaystyle\sum_z a(y)\bar{a(z)}\sqrt{3}F_z(y)$.  But $<F,F_z>=\displaystyle\sum_y a(y)\sqrt{3}F_z(y)$ and so
\begin{eqnarray}
\displaystyle\sum_z|<F,F_z>|^2&=&3\displaystyle\sum_z\displaystyle\sum_y\displaystyle\sum_{y'} a(y)\overline{a(y')}F_y(z)F_{y'}(z) \notag \\
&=&3\displaystyle\sum_y\displaystyle\sum_{y'}a(y)\overline{a(y')}F_{y'}(y) =3||F||^2_{\ell^2(\Go)}\label{5.2}
\end{eqnarray}
\end{proof}

It follows from polarizing \eqref{5.2} that we may also write $\tilde{P_6}F=\frac{1}{3}\Sum_z<F,F_z>F_z$.

The solution of problem $(a)$ is due to Cartier \cite{Cartier}.  
We outline the solution   following \cite{F-TN}.

\begin{definition}\label{defn5.5}
Let $z\in \mathbb{C}$ with $2^{2z-1}\neq1$.  Let $c(z)=\frac{1}{3}\frac{2^{1-z}-2^{z-1}}{2^{-z}-2^{z-1}}$, $c(1-z)=\frac{1}{3}\frac{2^{-z}-2^z}{2^{-z}-2^{z-1}}$ and $\varphi_z(n)=c(z)2^{-nz}+c(1-z)2^{-n(1-z)}$.
\end{definition}
\begin{remark}
Note that $c(z)$ and $c(1-z)$ are characterized by the identities $c(z)+c(1-z)=1$ and $c(z)2^{-z}+c(1-z)2^{z-1}=c(z)2^z+c(1-z)2^{1-z}$ which imply $\varphi_z(0)=1$ and $\varphi_z(1)=\varphi_z(-1)$.
\end{remark}
\begin{theorem}\label{thm5.3}
For any fixed $y\in\G$, let $f_y(x)=\varphi_z(d(x,y))$.  Then
\begin{equation}
-\D_\G f_y=(3-2^z-2^{1-z})f_y \end{equation}
and $f_y$ may be characterized as the unique $(3-2^z-2^{1-z})$-eigenfunction that is radial about $y$ and satisfying $f_y(y)=1$.  \end{theorem}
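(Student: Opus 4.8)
The plan is to verify the eigenfunction equation pointwise and then obtain the characterization by a recursion, in both steps exploiting the local structure of the $3$-regular tree. Recall that on a $3$-regular graph the Laplacian acts by $-\D_\G f(x)=3f(x)-\sum_{x'\sim x}f(x')$, so everything reduces to understanding how $d(\cdot,y)$ behaves on the three neighbors of a given vertex. The tree geometry is decisive here: if $x\neq y$, then among the three neighbors of $x$ exactly one lies on the geodesic toward $y$ and hence is at distance $d(x,y)-1$, while the other two are at distance $d(x,y)+1$; at the center $x=y$ all three neighbors are at distance $1$. Writing $n=d(x,y)$ and $f_y(x)=\varphi_z(n)$, the eigenvalue equation at a vertex with $n\geq1$ becomes, after cancelling the $3\varphi_z(n)$ terms,
\[
\varphi_z(n-1)+2\varphi_z(n+1)=(2^z+2^{1-z})\varphi_z(n),
\]
while at the center it reads $3\varphi_z(1)=(2^z+2^{1-z})\varphi_z(0)$.

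First I would dispose of the interior vertices. The two exponentials in $\varphi_z$ are built from the bases $t=2^{-z}$ and $t=2^{z-1}$, and each of these satisfies $t^{-1}+2t=2^z+2^{1-z}$; equivalently, $2^{-z}$ and $2^{z-1}$ are precisely the two roots of $2t^2-(2^z+2^{1-z})t+1=0$ (their product is $\tfrac12$ and their sum is $\tfrac12(2^z+2^{1-z})$). Consequently each exponential satisfies the displayed two-step recursion term by term, hence so does any linear combination, in particular $\varphi_z$. Thus the eigenvalue equation holds automatically at every vertex with $n\geq1$, for any choice of the constants $c(z),c(1-z)$.

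The remaining, slightly delicate, point is the center, and this is where the normalization in Definition~\ref{defn5.5} enters. The identity $c(z)+c(1-z)=1$ gives $\varphi_z(0)=1$ (which is also the required value $f_y(y)=1$), while the second identity recorded in the Remark following Definition~\ref{defn5.5} is exactly the symmetry $\varphi_z(-1)=\varphi_z(1)$. Now the interior recursion, read formally at $n=0$, asserts $\varphi_z(-1)+2\varphi_z(1)=(2^z+2^{1-z})\varphi_z(0)$; substituting $\varphi_z(-1)=\varphi_z(1)$ collapses this to $3\varphi_z(1)=(2^z+2^{1-z})\varphi_z(0)$, which is precisely the center equation. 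Hence $-\D_\G f_y=(3-2^z-2^{1-z})f_y$ at $y$ as well. I expect the correct bookkeeping at the center---recognizing that the symmetry $\varphi_z(-1)=\varphi_z(1)$ is exactly what converts the formal $n=0$ recursion into the genuine boundary condition---to be the only real subtlety in the argument.

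Finally, for the characterization I would argue that the eigenvalue equation, radiality, and the normalization $f_y(y)=1$ determine all values uniquely. Any radial $(3-2^z-2^{1-z})$-eigenfunction $g(x)=h(d(x,y))$ must satisfy the center equation, which determines $h(1)$ from $h(0)=1$, and then the interior recursion, which determines $h(n+1)$ from $h(n)$ and $h(n-1)$ for each $n\geq1$. Thus $h$, and hence $g$, is uniquely determined; since $f_y$ is one such function, it is the unique one.
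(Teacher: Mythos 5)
Your proof is correct and takes essentially the same approach as the paper: verify the eigenvalue equation separately for $x\neq y$ (using the tree's one-closer/two-farther neighbor structure) and at the center (using the symmetry $\varphi_z(-1)=\varphi_z(1)$ from the Remark), with uniqueness following from the two-term recursion determined by the eigenvalue equation and radiality. The only difference is one of detail: you spell out the characteristic-root computation showing $2^{-z}$ and $2^{z-1}$ both satisfy $2t^2-(2^z+2^{1-z})t+1=0$, a step the paper's proof calls ``immediate.''
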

\begin{proof}
Uniqueness follows from the eigenvalue equation.  To verify the eigenvalue equation we do the computation separately for $x\neq y$ and $x=y$.  For $x\neq y$ note that $x$ has two neighbors, $x_1$ and $x_2$, with $d(x_1,y)=d(x_2,y)=d(x,y)+1$ and one neighbor, $x_3$, with $d(x_3,y)=d(x,y)-1$ so the eigenvalue equation is immediate.  On the other hand $y$ has three neighbors, $x_1,x_2,x_3$, with $d(x_j,y)=1$, and the eigenvalue equation follows from $\varphi_z(1)=\varphi_z(-1)$.  \end{proof}

Note that there is no choice of $z$ that
will make $f_y$ belong to $\ell^2(\G)$.  However, the choice $z=\frac{1}{2}+i t$ gets close.  Indeed $|\varphi_{\frac{1}{2}+it}(d(x,y))|^2 \approx \displaystyle\sum_n 2^n \cdot 2^{-n}$ just diverges.  So it is natural to conjecture that these eigenfunctions give the spectral resolution of $-\D_\G$ on $\ell^2(\G)$.  In fact the following proposition is the content of Theorem 6.4 on p. 61 of \cite{F-TN}.

\begin{proposition}\label{prop5.7}
By periodicity we may restrict $t$ to $0\leq t \leq \frac{\pi}{\log 2}$.  Write $\lambda(t)=3-2\sqrt{2} \cos(t \log 2)=3-2^{\frac{1}{2}+it}-2^{\frac{1}{2}-it}$ and  $\sum=[3-2\sqrt{2},3+2\sqrt{2}]\approx[0.17,5.83]\subsetneq[0,6]$.  Define
\begin{equation} P_tf(x)=\displaystyle\sum_y \varphi_{\frac{1}{2}+it}(d(x,y))f(y). \end{equation}
Note that $-\D_\G P_t f=\lambda(t)P_t(f)$.  Then
\begin{equation} f(x)=\int_0^{\frac{\pi}{\log 2}}P_tf(x)dm(t) \end{equation}
for the measure
\begin{equation}
dm(t)=\frac{\log 2}{3 \pi} \left|c(\frac{1}{2}+it)\right|^{-2}dt=\frac{(3 \log 2)\sin^2(t \log 2)}{\pi(1+2 \sin^2(t \log 2))} dt. \end{equation}
\end{proposition}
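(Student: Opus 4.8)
The plan is to avoid constructing the spectral resolution from scratch and instead reduce the asserted inversion formula to a family of \emph{scalar} identities that can be checked directly. The key structural observation is that $\G$ is vertex-transitive, that $-\D_\G$ commutes with the isometry group, and that the operator $f\mapsto\int_0^{\pi/\log2}P_tf\,dm(t)$ is linear in $f$. Hence it suffices to prove the resolution for $f=\delta_o$ at one base vertex $o$: transitivity then gives it for every $f=\delta_y$, and linearity gives it for every finitely supported $f$, which is the dense class in which the proposition is asserted. Since $P_t\delta_o(x)=\varphi_{1/2+it}(d(x,o))$, the whole statement collapses to the moment identities
\begin{equation*}
\int_0^{\pi/\log2}\varphi_{1/2+it}(n)\,dm(t)=\delta_{n,0},\qquad n=0,1,2,\dots,
\end{equation*}
where $n=d(x,o)$. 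The pointwise eigenfunction relation $-\D_\G P_tf=\lambda(t)P_tf$ is already furnished by Theorem~\ref{thm5.3}, so once these identities are established we have exhibited the spectral resolution.

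First I would settle the range and periodicity bookkeeping. From Definition~\ref{defn5.5} one checks that $\varphi_{1/2+it}(n)=2\,\Re\big(c(\tfrac12+it)\,2^{-n(1/2+it)}\big)$ is real-valued, even in $t$, and periodic with period $2\pi/\log2$, while $\lambda(t)=3-2\sqrt2\cos(t\log2)$ maps $[0,\pi/\log2]$ monotonically and bijectively onto $\Sigma=[3-2\sqrt2,3+2\sqrt2]$. This justifies restricting $t$ to $[0,\pi/\log2]$, identifies $\Sigma$ as the support, and makes $\Sigma\subsetneq[0,6]$ (no spectrum at $0$ or $6$) a matter of arithmetic, consistently with the general discussion of Section~\ref{sec-general}.

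Next I would verify the moment identities, using the form $dm(t)=\frac{\log2}{3\pi}\,|c(\tfrac12+it)|^{-2}dt$. Substituting $\theta=t\log2\in[0,\pi]$ turns each identity into the integral of a rational function of $e^{i\theta}$ over the circle, which I would evaluate by residues. The case $n=0$ is exactly the assertion that $dm$ is a probability measure, $\int_0^{\pi/\log2}dm(t)=1$, which simultaneously pins the constant $\tfrac{\log2}{3\pi}$; for odd $n$ the integrand is odd about $\theta=\pi/2$ and the integral vanishes by symmetry; for even $n\ge2$ the vanishing emerges from the residue computation. The closed-form density then follows from the elementary evaluation of $|c(\tfrac12+it)|^2$, and the eigenfunction asymptotics $\varphi_z(n)\sim c(z)2^{-nz}$ of Definition~\ref{defn5.5} explain why $|c|^{-2}$ is the correct weight.

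The main obstacle is precisely this normalization bookkeeping: tracking the constant and every factor so that the $n=0$ integral equals exactly $1$ and every higher moment equals exactly $0$. A more conceptual route, which I would keep in reserve to cross-check the constant, is to compute the resolvent $G_\zeta=(-\D_\G-\zeta)^{-1}$ directly: since $-\D_\G=3I-A$ with $A$ the adjacency operator, $G_\zeta(x,y)=\big((3-\zeta)I-A\big)^{-1}(x,y)$ is homogeneous on a tree, $G_\zeta(x,y)=G_\zeta(o,o)\,\rho(\zeta)^{d(x,y)}$, with $\rho$ and $G_\zeta(o,o)$ solving algebraic equations whose branch points $3-\zeta=\pm2\sqrt2$ pin down $\Sigma$. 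Stone's formula then gives the spectral density as the boundary jump $\tfrac{1}{2\pi i}\big(G_{\lambda-i0}-G_{\lambda+i0}\big)(x,y)\,d\lambda$; for fixed $y$ this jump is a radial $\lambda$-eigenfunction normalized at $x=y$, so by the uniqueness clause of Theorem~\ref{thm5.3} it must be a scalar multiple of $\varphi_{1/2+it}(d(x,y))$, and the change of variables $3-\lambda=2\sqrt2\cos(t\log2)$ converts $d\mu$ into $dm(t)$. Either way the constant is the crux, and I would use the $f=\delta_o$ reduction above as the final self-contained check that it is correct.
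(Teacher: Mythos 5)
Your proposal is correct, but it takes a genuinely different route from the paper. The paper does not prove Proposition \ref{prop5.7} internally: it attributes the statement to Cartier and to Theorem 6.4 of \cite{F-TN}, and then, after Theorem \ref{thm5.4}, sketches the underlying argument --- an explicit resolvent formula $(\lambda I+\D_\G)^{-1}=\frac{1}{2^{-z}-2^z}H_z$, where $H_zf(a)=\sum_b 2^{-zd(a,b)}f(b)$, followed by a contour integral around $\Sigma$ whose boundary jump (Stone's formula) produces $P_t$ and $dm(t)$. That is precisely the argument you hold ``in reserve,'' so your cross-check is in fact the paper's proof. Your primary route --- reduce by vertex-transitivity and linearity to $f=\delta_o$, so that the proposition collapses to the moment identities $\int_0^{\pi/\log 2}\varphi_{1/2+it}(n)\,dm(t)=\delta_{n,0}$, then verify these by the symmetry $\theta\mapsto\pi-\theta$ for odd $n$ and by residues for even $n$ --- is sound and more elementary: it needs only Theorem \ref{thm5.3} plus classical integrals, and it establishes the inversion formula on the dense class of finitely supported $f$, which is all that is used downstream. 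What the paper's route buys in exchange: the resolvent computation simultaneously identifies $\mathrm{spect}(-\D_\G)=\Sigma$ and transfers essentially verbatim to the edge graph $\Go$ (Lemma \ref{lem5.9} and the contour argument following Theorem \ref{thm5.4}), whereas your reduction is tailored to this one formula.

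One thing your computation will expose, and should be stated rather than glossed over: the two displayed expressions for $dm(t)$ in the proposition are not actually equal, and your $n=0$ check discriminates between them. From Definition \ref{defn5.5}, with $\theta=t\log 2$,
\begin{equation*}
c(\tfrac12+it)=\tfrac12+\frac{i\cos\theta}{6\sin\theta},\qquad
\bigl|c(\tfrac12+it)\bigr|^2=\frac{1+8\sin^2\theta}{36\sin^2\theta},
\end{equation*}
so the density that actually follows from evaluating $|c|^2$ is
\begin{equation*}
\frac{\log 2}{3\pi}\bigl|c(\tfrac12+it)\bigr|^{-2}dt
=\frac{12\,(\log 2)\sin^2(t\log 2)}{\pi\bigl(1+8\sin^2(t\log 2)\bigr)}\,dt,
\end{equation*}
which has total mass $1$ on $[0,\pi/\log 2]$ --- exactly as your $n=0$ identity forces, since $\varphi_{1/2+it}(0)=1$ --- and agrees with the Kesten spectral measure of the $3$-regular tree. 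The printed closed form $\frac{(3\log2)\sin^2(t\log2)}{\pi(1+2\sin^2(t\log2))}\,dt$ has total mass $\frac32\bigl(1-3^{-1/2}\bigr)\neq1$, so it is a typo, and your claim that the printed closed form ``follows'' from the $|c|^{-2}$ expression would not survive the computation; what follows is the corrected density above. With that emendation your plan goes through: the reduction to $\delta_o$ is legitimate (tree isometries commute with $\D_\G$ and with $f\mapsto\int P_tf\,dm$, since the kernel is radial), the odd-$n$ vanishing by symmetry is correct, and the even-$n$ vanishing is a routine residue or Chebyshev-polynomial computation.
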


It is convenient to change notation so that the eigenvalue $\lambda$ rather than $t$ is the parameter.  We easily compute $t=\frac{1}{\log 2} \cos^{-1}\left(\frac{3-\lambda}{2\sqrt{2}}\right)$.

Note that $$d\lambda=2\sqrt{2}\log2\sin(t \log2)dt, \qquad \sin^2(t\log 2)=({-\lambda^2+6\lambda-1})/{8},$$  and $$ 1+2\sin^2(t\log2)=({-\lambda^2+6\lambda+3})/{4}.$$
If we write $P_\lambda=P_t$ then the spectral resolution is $$f(x)=\int_{3-2\sqrt2}^{3+2\sqrt2}P_\lambda f(x)dm(\lambda)$$ for $$dm(\lambda)=\frac{3\sqrt{-\lambda^2+6\lambda-1}}{\sqrt{2}\pi(-\lambda^2+6\lambda+3)}d\lambda.$$

Now suppose $F\in\ell^2(\Go)$ lies in $E^\perp_6$.  Then we may write $F=S_1f$ for $f=(6I+\D_\G)^{-1}S_2F$ in $\ell^2(\G)$.
Indeed we know that 6 is in the resolvent of $-\D_\G$ so $f$ is well-defined, and then $S_2S_1f=S_2F$ by Lemma~\ref{lem-4-1}.  
Since $S_2$ is injective on $E_6^{\perp}$ and $S_1f\in E_6^\perp$ we conclude $S_1f=F$.

By Proposition \ref{prop5.7} we have
\begin{equation}  S_1f=\int_\Sigma S_1P_\lambda f dm(\lambda),\end{equation}
and of course $-\D_\Go S_1P_\lambda f=\lambda S_1P_\lambda f$ by Lemma~\ref{lem-4-2}, 
so we define $\tilde{P_\lambda}F=S_1P_\lambda f$ and we obtain the spectral resolution of $F$:
\begin{equation} F=\int_\Sigma\tilde{P_\lambda}F dm(\lambda). \end{equation}
Note that $P_\lambda(6I+\D_\G)^{-1}=\frac{1}{6-\lambda}P_\lambda$ so $\tilde{P_\lambda}F=\frac{1}{6-\lambda}S_1P_\lambda S_2F$.

We may write this quite explicitly as follows:
\begin{lemma}\label{lem5.8}
Define
\begin{equation}
\psi_z(n)=\tilde{c}(z)2^{-nz}+\tilde{c}(1-z)2^{-n(1-z)} \end{equation}
for $\tilde{c}(z)=(2+2^{-z}+2^z)c(z)$.  Note that 
 $\psi_z(n)=2\varphi_z(n)+\varphi_z(n+1)+\varphi_z(n-1)$. 
 Then 
\begin{equation}
S_1P_\lambda S_2 F(x)=\frac{1}{3} \displaystyle\sum_y \psi_{\frac{1}{2}+it}(d(x,y))F(y). \end{equation}
\end{lemma}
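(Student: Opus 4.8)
The plan is to compute the integral kernel of the composite operator $S_1P_\lambda S_2$ on $\Go\times\Go$ and recognize it as a single value of $\psi_{\frac12+it}$ at the edge distance. First I would unwind the three factors. By Proposition~\ref{prop5.7} the operator $P_\lambda$ acts on $\ell^2(\G)$ with kernel $\varphi_{\frac12+it}(d(a,b))$, while \eqref{e-4-1} and \eqref{e-4-2} give $S_2F(b)=\sum_{y\ni b}F(y)$, the sum over the three edges of $\G$ incident to the vertex $b$, and $S_1g(x)=g(a_1)+g(a_2)$ for the edge $x=(a_1,a_2)$. Taking $F$ of finite support (so that all sums are finite and may be freely interchanged) and composing, I would obtain
\begin{equation*}
S_1P_\lambda S_2F(x)=\sum_{y\in\Go}K(x,y)\,F(y),\qquad K(x,y)=\sum_{a\in x}\sum_{b\in y}\varphi_{\frac12+it}\big(d(a,b)\big),
\end{equation*}
so that $K(x,y)$ is the four-term sum over the endpoints $a_1,a_2$ of $x$ and $b_1,b_2$ of $y$.

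The heart of the argument is to rewrite these four vertex distances in terms of the single edge-graph distance $n=d(x,y)$ in $\Go$, and this is precisely where the tree structure of $\G$ is essential: any two edges of a tree are joined by a unique geodesic, with no choices to make. I would relabel endpoints so that $a_2$ and $b_1$ are the endpoints of $x$ and $y$ nearest one another along this geodesic. A short case check — the edges coincide, share a vertex, or are disjoint — then shows that for $x\neq y$ the four distances $d(a_i,b_j)$ form the multiset $\{n-1,n,n,n+1\}$, while for $x=y$ they are $\{0,0,1,1\}$. Using the symmetry $\varphi_z(1)=\varphi_z(-1)$ recorded after Definition~\ref{defn5.5} to interpret the term $\varphi_z(-1)$ when $n=0$, in every case this collapses to
\begin{equation*}
K(x,y)=\varphi_{\frac12+it}(n-1)+2\varphi_{\frac12+it}(n)+\varphi_{\frac12+it}(n+1).
\end{equation*}

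Finally I would invoke the identity $\psi_z(n)=2\varphi_z(n)+\varphi_z(n+1)+\varphi_z(n-1)$ (which follows directly from $\tilde c(z)=(2+2^{-z}+2^z)c(z)$ and the corresponding formula for $\tilde c(1-z)$), identifying the right-hand side above with $\psi_{\frac12+it}(d(x,y))$; collecting the normalization constants from the three factors then produces the stated formula. The only genuinely delicate point is the distance bookkeeping in the second paragraph — everything else is a direct, absolutely convergent manipulation — so I expect the main obstacle to be organizing that case analysis cleanly and confirming that the geodesic relabelling yields the multiset $\{n-1,n,n,n+1\}$ uniformly, since it is exactly this uniformity that makes the four contributions telescope into a single value of $\psi_{\frac12+it}$.
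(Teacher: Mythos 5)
Your argument is correct and is essentially the paper's own proof: you expand $S_1P_\lambda S_2$ into the four-term endpoint sum $\sum_{a\in x}\sum_{b\in y}\varphi_{\frac12+it}(d(a,b))$, check on the tree that the four distances form the multiset $\{n-1,n,n,n+1\}$ when $x\neq y$ (and $\{0,0,1,1\}$ when $x=y$, handled by $\varphi_z(-1)=\varphi_z(1)$), and collapse the sum via $\psi_z(n)=2\varphi_z(n)+\varphi_z(n+1)+\varphi_z(n-1)$. This is exactly the computation in the paper, including the same case split and the same use of the symmetry of $\varphi_z$.

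One caveat, which concerns the statement rather than your reasoning. Your closing step, ``collecting the normalization constants from the three factors,'' cannot be carried out: $S_1$, $S_2$ and $P_\lambda$, as defined in \eqref{e-4-1}, \eqref{e-4-2} and Proposition~\ref{prop5.7}, carry no constants, so the computation you perform yields exactly $S_1P_\lambda S_2F(x)=\sum_y\psi_{\frac12+it}(d(x,y))F(y)$, with no factor $\frac13$. The paper's own proof has the same feature: it derives the kernel $\psi(d(x,y))$ and never accounts for the $\frac13$ in the displayed formula. A consistency check indicates that the constant-free version is the correct one: Proposition~\ref{prop5.7} applied to $f=\delta_{x_0}$ and evaluated at $x_0$ forces $m$ to have total mass one; the eigenvalue equation gives $\varphi(1)=1-\frac{\lambda}{3}$, hence $\psi(0)=\frac23(6-\lambda)$; and since $\tilde{P_6}(x_0,x_0)=\frac13$, the resolution of the identity at the diagonal reads $\frac13+\int_\Sigma\frac{\psi(0)}{6-\lambda}\,dm(\lambda)=\frac13+\frac23=1$ without the $\frac13$, whereas inserting it gives $\frac13+\frac29\neq1$. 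So treat the $\frac13$ as a spurious constant in the statement; your argument (like the paper's) proves the formula without it, and this is the version consistent with the rest of the section.
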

\begin{proof}
$S_2F(b)=\displaystyle\sum_{y\sim b}F(y)$.  There are 3 terms in the sum, and $y\sim b$ means the edge $y$ joins $b$ and one of its neighbors in $\G$.  Then we compute
\begin{equation}
P_\lambda S_2 F(a)=\Sum_{b\in \G}\Sum_{y\sim b} \varphi_{\frac{1}{2}+it}(d(a,b))F(y)
\end{equation}
and 
\begin{equation} 
S_1P_\lambda S_2 F(x)=\Sum_{a\sim x}\Sum_{b\in\G}\Sum_{y\sim b} \varphi_{\frac{1}{2}+it}(d(a,b))F(y)
\end{equation}
where $a\sim x$ means that $a$ is one of the vertices in the edge $x$.  Suppose $x\neq y$ and let $n=d(x,y)$ with $n\geq1$, (Figure \ref{fig-5-3} shows the $\Go$ graph for $n=2$).

\begin{figure}[htb]
\begin{center}
\begin{picture}(100,105)(-50,-50)\thicklines
\setlength{\unitlength}{7pt}
\put(-16,0){\line(1,0){32}}
\put(-16,0){\line(1,-1){4}}
\put(-12,-4){\line(1,1){12}}
\put(0,8){\line(-1,0){8}}
\put(-8,8){\line(1,-1){16}}
\put(0,-8){\line(1,0){8}}
\put(0,-8){\line(1,1){12}}
\put(12,4){\line(1,-1){4}}
\put(-13,-2){$a_2$}
\put(-8,-1.2){$x$}
\put(-5,1.5){$a_1$}
\put(3,-2){$b_1$}
\put(11,1.5){$b_2$}
\put(8.5,-1.2){$y$}
\end{picture}
\end{center}
\caption{Graph $\Go$}\label{fig-5-3}\end{figure}

Then $x\sim a_1$ and $x\sim a_2$ while $y\sim b_1$ and $y\sim b_2$ with $d(a_1,b_2)=d(a_2,b_1)=n$, $d(a_1, b_1)=n-1$, and $d(a_2,b_2)=n+1$.  The result follows in this case.  When $x=y$ we have $d(x,y)=0$ and $a_1=b_1$, $a_2=b_2$ so $d(a_1,b_2)=d(a_2,b_1)=1$ and $d(a_1,b_1)=d(a_2,b_2)=0$.  The result follows because $\varphi_{\frac{1}{2}+it}(-1)=\varphi_{\frac{1}{2}+it}(1)$.
\end{proof}
\begin{theorem}\label{thm5.4}
For any $F\in\ell^2(\Go)$ we have the explicit spectral resolution
\begin{equation}
F=\tilde{P_6}F+\int_\Sigma\tilde{P_\lambda}Fdm(\lambda) \end{equation}
for
\begin{equation}
\tilde{P_\lambda}F(x)=\frac{1}{3(6-\lambda)}\Sum_y \psi_{\frac{1}{2}+it}(d(x,y))F(y) .\end{equation}
\end{theorem}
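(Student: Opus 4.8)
The plan is to read Theorem~\ref{thm5.4} as a corollary that assembles three earlier ingredients: the orthogonal splitting $\ell^2(\Go)=E_6\oplus E_6^\perp$ coming from Theorem~\ref{thm5.1}, the spectral resolution on $E_6^\perp$ produced in the computation just before Lemma~\ref{lem5.8}, and the explicit kernel of Lemma~\ref{lem5.8}. No new integration or eigenfunction construction is needed; the work is to glue these together and to verify one compatibility condition.

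First I would fix an arbitrary $F\in\ell^2(\Go)$ and write $F=\tilde{P_6}F+G$ with $G=F-\tilde{P_6}F\in E_6^\perp$, which is legitimate because $\tilde{P_6}$ is the orthogonal projection onto $E_6$ by Theorem~\ref{thm5.1}. For the component $G$, the computation preceding Lemma~\ref{lem5.8} already gives $G=\int_\Sigma \tilde{P_\lambda}G\,dm(\lambda)$ with $\tilde{P_\lambda}G=\frac{1}{6-\lam}S_1P_\lambda S_2G$, and Lemma~\ref{lem5.8} rewrites the right-hand side as $\tilde{P_\lambda}G(x)=\frac{1}{3(6-\lam)}\Sum_y\psi_{\frac12+it}(d(x,y))G(y)$, which is exactly the kernel displayed in the statement, acting on $G$. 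The eigenvalue identity $-\D_\Go\tilde{P_\lambda}G=\lam\tilde{P_\lambda}G$ is inherited from Lemma~\ref{lem-4-2}, and the change of variables from $t$ to $\lam$ has already been performed to obtain $dm(\lambda)$, so there is nothing further to integrate.

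The only point requiring care — and the step I expect to be the main obstacle — is showing that applying the explicit kernel to all of $F$ gives the same result as applying it to $G$, i.e.\ that the point-spectrum part $\tilde{P_6}F$ contributes nothing to the integral. The clean way to see this is that $S_2$ annihilates $E_6$: since $E_6=(\text{Im}\,S_1)^\perp$ by the remark following Lemma~\ref{lem-4-1}, and $S_2=S_1^*$ by a one-line computation from \eqref{e-4-1} and \eqref{e-4-2}, we get $E_6=\ker S_1^*=\ker S_2$, hence $S_2(\tilde{P_6}F)=0$. By Lemma~\ref{lem5.8} the kernel operator equals $\frac{1}{6-\lam}S_1P_\lambda S_2$, so it too annihilates $\tilde{P_6}F$, giving $\tilde{P_\lambda}F=\tilde{P_\lambda}G$ for $m$-a.e.\ $\lam$. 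Substituting back yields $F=\tilde{P_6}F+\int_\Sigma\tilde{P_\lambda}F\,dm(\lambda)$ with the stated kernel, which completes the proof.
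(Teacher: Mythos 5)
Your proof is correct and follows essentially the same route as the paper, which establishes Theorem \ref{thm5.4} precisely by combining Proposition \ref{prop5.7} (through the computation preceding Lemma \ref{lem5.8}, where $F\in E_6^\perp$ is written as $S_1f$ with $f=(6I+\D_\G)^{-1}S_2F$ and $\tilde{P_\lambda}F=\frac{1}{6-\lambda}S_1P_\lambda S_2F$) with the explicit kernel of Lemma \ref{lem5.8}. Your extra verification that the kernel annihilates $E_6$ — via $S_2=S_1^*$ and $E_6=(\mathrm{Im}\,S_1)^\perp=\ker S_2$ — makes explicit a compatibility step the paper leaves implicit (and uses elsewhere, e.g.\ in Section \ref{sec6}), and it is exactly right.
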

The Theorem follows by combining Lemma \ref{lem5.8} and Proposition \ref{prop5.7}.  We note that the proof of Proposition \ref{prop5.7} involves an explicit computation of the resolvent $(\lambda I+\D_\G)^{-1}$ for $\lambda$ outside the spectrum of $-\D_\G$, followed by a contour integral to obtain the spectral resolution from the resolvent.  We sketch some of these ideas and then show how to carry out a similar proof of Theorem \ref{thm5.4}.

On $\ell^2(\G)$ we define
\begin{equation} H_zf(a)=\Sum_b2^{-zd(a,b)}f(b).\end{equation}
A direct computation shows
\begin{equation} (\lambda I+\D_\G)H_zf=(2^{-z}-2^z)f \end{equation}
for $\lambda=3-2^z-2\cdot2^{-z}$.

Note that $\frac{3-\lambda}{2\sqrt{2}}=\cosh((z-\frac{1}{2})\log2)$, and in order to have $H_z$ bounded on $\ell^2(\G)$ we need $\Re z>\frac{1}{2}$.  This shows $spect(-\D_\G)=\Sigma$ and $(\lambda I +\D_\G)^{-1}=\frac{1}{2^{-z}-2^z}H_z$ for $z \notin\Sigma$.

On $\ell^2(\Go)$ we define
\begin{equation}
\tilde{H_z}F(x)=\Sum_y2^{-zd(x,y)}F(y). \end{equation}

\begin{lemma}\label{lem5.9}
$spect(-\D_\Go)^{-1}=\Sigma \cup \{6\}$ and $(\lambda I+\D)^{-1}=\frac{1}{2\cdot2^{-z}-2^z-1}\tilde{H_z}$ for $z\notin spect(-\D_\Go)$.
\end{lemma}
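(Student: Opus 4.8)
The plan is to treat the two assertions separately. The statement about the spectrum is immediate from earlier results: Lemma~\ref{lem-4-2} gives $sp(-\D_\Go)=sp(-\D_\G)\cup\{6\}$, and the boundedness analysis of $H_z$ carried out just above identifies $sp(-\D_\G)=\Sigma$; since $6>3+2\sqrt2$ this yields $sp(-\D_\Go)=\Sigma\cup\{6\}$. (The same conclusion can be read off a posteriori from the resolvent formula, which will exhibit every $\lam\notin\Sigma\cup\{6\}$ as a point of the resolvent set.) So the real work is the resolvent identity, and my plan is to reduce it to a single pointwise kernel identity on $\Go$.

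First I would fix the parametrization $\lam=3-2^z-2\cdot2^{-z}$. This is the same $\lam(z)$ as for $\G$, which is forced because the two operators share the continuous part of their spectrum; note $z\mapsto\lam$ is two-to-one with symmetry $z\leftrightarrow1-z$, so for $\lam\notin\Sigma$ I choose the branch $\Re z>\frac12$. Writing $-\D_\Go F(x)=4F(x)-\Sum_{x'\sim x}F(x')$, the claim $(\lam I+\D_\Go)\tilde{H_z}=q(z)\,I$, with $q(z):=2\cdot2^{-z}-2^z-1$, is equivalent to the statement that for every ordered pair $x,y\in\Go$
\begin{equation*}
(\lam-4)\,2^{-zd(x,y)}+\Sum_{x'\sim x}2^{-zd(x',y)}=q(z)\,\delta_{xy}.
\end{equation*}
For finitely supported $F$ each column $2^{-zd(\cdot,y)}$ lies in $\ell^2(\Go)$ exactly when $\Re z>\frac12$ — the sphere $\{d(\cdot,y)=n\}$ has cardinality $\sim2^n$, since $\Go$ is the line graph of the $3$-regular tree and its balls grow at rate $2$ — so $\tilde{H_z}F\in\ell^2(\Go)$ and the identity above may be verified on this dense class.

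The diagonal term $x=y$ is immediate: all four neighbors are at distance $1$, so the left side is $(\lam-4)+4\cdot2^{-z}$, which equals $q(z)$ by the choice of $\lam$. The off-diagonal identity rests on the combinatorial fact that, when $d(x,y)=n\ge1$, the four neighbors of $x$ in $\Go$ lie at distances $\{n-1,\,n,\,n+1,\,n+1\}$ from $y$; I expect establishing this neighbor count to be the main obstacle. I would prove it from the line-graph structure: realizing the vertex $x$ of $\Go$ as an edge $(a_1,a_2)$ of $\G$, deletion of this edge splits $\G$ into subtrees $T_1\ni a_1$ and $T_2\ni a_2$, and $y$ lies entirely in one of them, say $T_2$. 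Then the two edges of $\G$ at the far endpoint $a_1$ are neighbors at distance $n+1$, while at $a_2$ exactly one incident edge lies on the geodesic toward $y$ (distance $n-1$) and the remaining sibling edge is off it (distance $n$). Substituting these distances, the left side becomes $2^{-zn}\big[(\lam-4)+2^z+1+2\cdot2^{-z}\big]$, which vanishes identically once $\lam-4=-1-2\cdot2^{-z}-2^z$, i.e. for our $\lam(z)$.

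Finally, for $\lam\notin\Sigma\cup\{6\}$ the operator $-\D_\Go$ has a bounded resolvent, so the unique $\ell^2$ solution of $(\lam I+\D_\Go)g=F$ is $g=(\lam I+\D_\Go)^{-1}F$; the kernel identity shows $q(z)^{-1}\tilde{H_z}F$ is such a solution for finitely supported $F$, whence $q(z)^{-1}\tilde{H_z}$ and $(\lam I+\D_\Go)^{-1}$ agree on a dense set and therefore coincide. A short computation confirms that in the region $\Re z>\frac12$ the factor $q(z)=2\cdot2^{-z}-2^z-1$ vanishes only at $z=1+\tfrac{i\pi}{\log2}$, precisely where $\lam=6$, so the formula $(\lam I+\D_\Go)^{-1}=\frac{1}{2\cdot2^{-z}-2^z-1}\tilde{H_z}$ holds exactly on the claimed range.
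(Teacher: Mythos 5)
Your proof is correct and takes essentially the same route as the paper: both reduce the resolvent formula to the kernel identity $(\lambda I+\Delta_{\Gamma_0})\tilde{H_z}=(2\cdot 2^{-z}-2^z-1)I$, verified by sorting the four neighbors of $x$ into distances $d(x,y)-1$, $d(x,y)$, $d(x,y)+1$, $d(x,y)+1$ from $y$, together with boundedness of $\tilde{H_z}$ for $\Re z>\tfrac12$ and the observation that the prefactor vanishes exactly at $z=1+\pi i/\log 2$, i.e.\ at $\lambda=6$. The only difference is one of completeness: you actually prove the neighbor-distance count (via the line-graph/subtree argument) and spell out the dense-subspace step, both of which the paper simply asserts.
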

\begin{proof}
Note that $\tilde{H_z}$ is bounded on $\ell^2(\Go)$ for $\Re z>\frac{1}{2}$.  Also $\lambda=6$ corresponds to $z=1+\frac{\pi i}{\log 2}$ for which $2\cdot2^{-z}-2^z-1=2(-\frac{1}{2})-(2)-1=0$.  Now fix $x$ and consider its four neighbors, $x_1,x_2,x_3,x_4$ (so $d(x,x_j)=1$).  For any fixed $y\neq x$ we may order them so that $d(x_1,y)=d(x_2,y)=d(x,y)+1$, $d(x_3,y)=d(x,y)$, $d(x_4,y)=d(x,y)-1$.  It follows that 
\begin{eqnarray}
&(\lambda I+\D_\Go)\tilde{H_z}F(x)=(\lambda-4)\tilde{H_z}F(x)+\Sum_j\tilde{H_z}F(x_j) \notag \\&
=(\lambda-4)F(x)+\Sum_j2^{-z}F(x) \notag +(\lambda-4)\Sum_{y\neq x}2^{-zd(x,y)}F(y)+\Sum_j \Sum_{y\neq x}2^{-zd(x,y)}F(y)\notag \\&
=(2\cdot2^{-z}-2^z-1)F(x)
\end{eqnarray}
and the result follows.
\end{proof}

For $f\in \ell^2(\G)$, we have
\begin{equation}\label{e-gamma} f=\frac{1}{2\pi i} \int_\gamma (\lambda I+\D_\G)^{-1} f d\lambda \end{equation}
for any contour $\gamma$ that circles $\Sigma$ once in the counterclockwise direction.  We choose $\gamma$ as shown and take the limit as $\delta \to 0^+$.  The contribution from the vertical 
segments goes to zero so
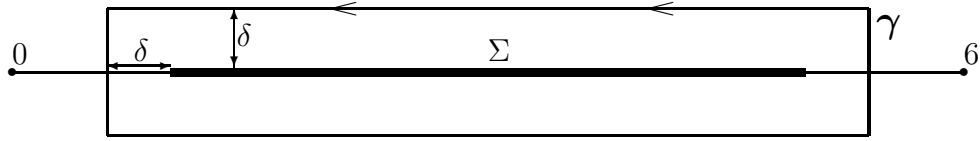
\begin{figure}[htb]\centering
\begin{picture}(360, 80)(0, -40)
\thicklines
\setlength{\unitlength}{60pt}
\put(0,0){\line(1,0){6}}
\put(.6,-.4){\line(1,0){4.8}\line(0,1){0.8}}
\put(5.4,.4){\line(-1,0){4.8}\line(0,-1){0.8}}
\linethickness{3pt}
\put(1,0){\line(1,0){4}}
\thinlines
\put(.6,.04){\vector(1,0){.4}}\put(1,.04){\vector(-1,0){.4}}
\put(1.4,.03){\vector(0,1){.37}}\put(1.4,.4){\vector(0,-1){.37}}
\multiput(0,0)(6,0)2{\circle*{.05}}
\put(4,.4){\setlength{\unitlength}{1pt}\line(4,1){10}}
\put(4,.4){\setlength{\unitlength}{1pt}\line(4,-1){10}}
\put(2,.4){\setlength{\unitlength}{1pt}\line(4,1){10}}
\put(2,.4){\setlength{\unitlength}{1pt}\line(4,-1){10}}
\setlength{\unitlength}{1pt}
\put(0,3){$0$}
\put(360,3){$6$}
\put(180,4){$\Sigma$}
\put(327,15){\large\mathversion{bold}$\gamma$}
\put(46,4){$\delta$}
\put(85,10){$\delta$}
\end{picture}
\caption{The contour $\gamma$ for integration in \eqref{e-gamma}. }\label{fig-contour}
\end{figure}
\begin{equation}
f=\lim_{\delta \to 0^+} \frac{1}{2 \pi i} \int_\Sigma \left( (\lambda - i \delta +\D_\G)^{-1}-(\lambda+ i \delta +\D_\G)^{-1}\right)f d\lambda. \end{equation}
If $z=\frac{1}{2} + \epsilon + i t$ for $\epsilon >0$ then $\frac{3-\lambda}{2\sqrt{2}}=\cos(t \log2 - i \epsilon \log2)$ and
\begin{equation}
\lambda=3-2\sqrt{2} \cos(t \log2) \cosh(\epsilon \log2)-i2\sqrt{2} \sinh(\epsilon \log2)\sin(t \log2). \end{equation}
For $t>0$ we have $\lambda \approx 3-2\sqrt{2} \cos(t \log2)-i\delta$, while for $t<0$ we have $\lambda \approx 3-2\sqrt{2} \cos(t \log2)+i\delta$ with $\delta>0$.  Thus
\begin{equation}
\lim_{\delta \to 0^+}\left( (\lambda - i \delta +\D_\G)^{-1}-(\lambda + i \delta +\D_\G)^{-1}\right) f = \frac{1}{2^{-\frac{1}{2}- i t}-2^{\frac{1}{2}+i t}}H_{\frac{1}{2}+i t} f -\frac{1}{2^{-\frac{1}{2}+ i t}-2^{\frac{1}{2}-i t}}
H_{\frac{1}{2}-i t}f
\end{equation}
so we obtain
\begin{equation}
f=\frac{1}{2 \pi i} \int_0^{\frac{\pi}{\log2}} \left( \frac{1}{2^{-\frac{1}{2}-it}-2^{\frac{1}{2}+it}}H_{\frac{1}{2}+i t} f - \frac{1}{2^{-\frac{1}{2}+it}-2^{\frac{1}{2}-it}}H_{\frac{1}{2}-i t} f\right) 2\sqrt{2} \log2 \sin(t\log2)dt.
\end{equation}
This is the same as $f=\int_0^{\frac{\pi}{\log2}} P_t f dm(t)$.

For $F\in \ell^2(\Go)$ we have
\begin{equation}
F=\frac{1}{2\pi i} \int_\gamma (\lambda I+\D_\Go)^{-1}Fd\lambda + \frac{1}{2\pi i} \int_{\gamma'} (\lambda I+\D_\Go)^{-1}Fd\lambda
\end{equation}
where $\gamma$ is as before and $\gamma'$ is a small circle about 6.  Taking the limit we obtain
\begin{eqnarray}
F&=&\lim_{\delta\to0^+}\frac{1}{2\pi i} \int_\Sigma \left((\lambda-i \delta +\D_\Go)^{-1} F-(\lambda+i \delta +\D_\Go)^{-1} F\right)d\lambda\notag \\
&& +\lim_{\delta\to0^+}\frac{1}{2\pi i} \int_0^{2\pi}(6+\delta e^{i \theta} +\D_\Go)^{-1} F i \delta e^{i \theta} d\theta. 
\end{eqnarray}
As before we can write the first term as
\begin{equation}
\frac{\sqrt{2} \log2}{ \pi i} \int_\Sigma \left( \frac{1}{2^{\frac{1}{2}-it}-2^{\frac{1}{2}+it}-1}\tilde{H}_{\frac{1}{2}+i t} F - \frac{1}{2^{\frac{1}{2}+it}-2^{\frac{1}{2}-it}-1}\tilde{H}_{\frac{1}{2}-i t} F\right)  \sin(t\log2)dt.
\end{equation}
which we identify with $\int_\Sigma \tilde{P_\lambda} F dm(\lambda)$, while the second term is $\tilde{P_6}F$.


Next we discuss an explicit Plancherel formula on $\G$, given in terms of the modified mean inner product
\begin{equation}\label{5.28}
<f,g>_M=\displaystyle\lim_{N\to\infty} \frac{1}{N} \Sum_{d(x,x_0)\leq N}f(x)\overline{g(x)}. 
\end{equation}
We will deal with eigenspaces for which the limit exists and is independent of the point $x_0$.  Note that this is not the usual mean on $\G$, since the cardinality of the ball $\{x:d(x,x_0)\leq N\}$ is $O(2^n)$, but it is tailor made for functions of growth rate $O(2^{-d(x,x_0)/2})$, which is exactly the growth rate of our eigenfunctions.

We expect that analogous results are valid for $k$-regular trees 
for all $k$, but to keep the discussion simple we only deal with 
the case $k=3$ that we need for our applications. 

\begin{lemma}\label{lemV1}
For all $n$ and $t$
\begin{equation}\label{5.29}
\varphi_{\frac{1}{2}+it}(n)=\frac{1}{3}\left(3\cos(nt\log2)+\frac{\sin(nt\log2)}{\tan(t\log2)}\right)2^{-n/2} \end{equation}
\end{lemma}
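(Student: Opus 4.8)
The plan is to prove \eqref{5.29} by a direct computation, specializing the definitions in Definition~\ref{defn5.5} to $z=\tfrac12+it$ and simplifying. The guiding structural observation is that at $z=\tfrac12+it$ the two terms defining $\varphi_z(n)$ are complex conjugates of one another: since $\overline{z}=1-z$ and $\log 2$ is real we have $\overline{2^{-nz}}=2^{-n(1-z)}$, and I will verify below that $\overline{c(z)}=c(1-z)$. Consequently $\varphi_{\frac12+it}(n)=c(z)2^{-nz}+\overline{c(z)2^{-nz}}=2\,\mathrm{Re}\bigl(c(z)2^{-nz}\bigr)$, which already explains why the right-hand side of \eqref{5.29} is real and reduces the whole lemma to evaluating a single real part.

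First I would record the elementary identities $2^{-nz}=2^{-n/2}e^{-int\log 2}$ and, abbreviating $\theta=t\log2$, the two arithmetic facts $2^{1/2}-2^{-1/2}=2^{-1/2}$ and $2^{1/2}+2^{-1/2}=3\cdot2^{-1/2}$, which are exactly what make the constant $\tfrac13$ and the factor $3$ appear in the final answer. Using these, the common denominator in Definition~\ref{defn5.5} simplifies to $2^{-z}-2^{z-1}=2^{-1/2}(e^{-i\theta}-e^{i\theta})=-2i\,2^{-1/2}\sin\theta$, while the numerator of $c(z)$ becomes $2^{1-z}-2^{z-1}=2^{1/2}e^{-i\theta}-2^{-1/2}e^{i\theta}=2^{-1/2}(\cos\theta-3i\sin\theta)$. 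Dividing and cancelling the factors $2^{-1/2}$ then gives $c(z)=\tfrac13\cdot\frac{\cos\theta-3i\sin\theta}{-2i\sin\theta}=\tfrac12+\tfrac{i}{6}\cot\theta$. An entirely parallel computation, using $2^{-z}-2^z=-2^{-1/2}(\cos\theta+3i\sin\theta)$, yields $c(1-z)=\tfrac12-\tfrac{i}{6}\cot\theta=\overline{c(z)}$, confirming the conjugacy claimed above (and incidentally $c(z)+c(1-z)=1$, consistent with the Remark following Definition~\ref{defn5.5}).

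It then remains only to extract the real part. Writing $2^{-nz}=2^{-n/2}\bigl(\cos(n\theta)-i\sin(n\theta)\bigr)$ together with $c(z)=\tfrac12+\tfrac{i}{6}\cot\theta$, I would compute $\mathrm{Re}\bigl(c(z)2^{-nz}\bigr)=2^{-n/2}\bigl(\tfrac12\cos(n\theta)+\tfrac16\cot\theta\,\sin(n\theta)\bigr)$, so that $\varphi_{\frac12+it}(n)=2\,\mathrm{Re}\bigl(c(z)2^{-nz}\bigr)=2^{-n/2}\bigl(\cos(n\theta)+\tfrac13\cot\theta\,\sin(n\theta)\bigr)$; factoring out $\tfrac13$ and substituting back $\theta=t\log2$ and $\cot\theta=1/\tan(t\log2)$ produces \eqref{5.29} exactly. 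There is no genuine obstacle here—the lemma is pure bookkeeping—so the only real risk is a sign or factor-of-$i$ slip in handling the complex exponentials, and the conjugacy reformulation is precisely the device that minimizes this risk by cutting the work down to a single term.
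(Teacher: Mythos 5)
Your proof is correct and takes essentially the same approach as the paper: the paper's own (very terse) proof likewise writes $\varphi_{\frac12+it}(n)=2\,\mathrm{Re}\bigl(c(\tfrac12+it)\,2^{-itn}\bigr)2^{-n/2}$ and then appeals to the explicit formula for $c(\tfrac12+it)$ and trigonometric identities. You have merely filled in the details the paper leaves implicit, namely the computation $c(\tfrac12+it)=\tfrac12+\tfrac{i}{6}\cot(t\log 2)$ and the extraction of the real part.
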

\begin{proof}
From the definition,
\begin{equation*}
\varphi_{\frac{1}{2}+it}(n)=\left(2\Re(c(\frac{1}{2}+it)2^{-itn}\right)2^{-n/2}. \end{equation*}
The result follows from the explicit formula for $c(\frac{1}{2}+it)$ and some trigonometric identities. 
\end{proof}

In what follows we write $\varphi$ for $\varphi_{\frac{1}{2}+it}$ to simplify the notation.
\begin{lemma} \label{lemV2}
Let
\begin{equation}\label{5.30}
b(\lambda)=8+\frac{1}{\sin^2(t\log2)}=8\left(\frac{-\lambda^2+6\lambda}{-\lambda^2+6\lambda-1}\right).
\end{equation}
Then for any integers $k$ and $j$,
\begin{eqnarray}\label{5.31}
\displaystyle\lim_{N\to\infty}\frac{1}{N}\Sum_{n=1}^N 2^{n+\frac{k}{2}}\varphi(n)\varphi(n+k)&
=\displaystyle\lim_{N\to\infty}\frac{1}{N}\Sum_{n=1}^N 2^{n+j+\frac{k}{2}}\varphi(n+j+k)\varphi(n+j)\\
&=\frac{1}{18}b(\lambda)\cos(kt\log2). \notag
\end{eqnarray}
\end{lemma}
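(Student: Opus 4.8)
The plan is to prove \eqref{5.31} by first disposing of the translation-invariance claim, then computing the remaining Ces\`aro limit explicitly via Lemma~\ref{lemV1}. The equality of the two limits in \eqref{5.31} (the statement that shifting the summation index by $j$ does not change the value) is the easier half: since $\varphi=\varphi_{\frac12+it}$ grows like $2^{-n/2}$, each summand $2^{n+k/2}\varphi(n)\varphi(n+k)$ is \emph{bounded} in $n$, so shifting the index $n\mapsto n+j$ changes the partial sum $\sum_{n=1}^N$ only by finitely many bounded boundary terms, whose contribution is $O(1)$ and hence vanishes after dividing by $N$ and letting $N\to\infty$. Thus I would first record that the Ces\`aro mean is unaffected by a finite index shift, reducing both sides to the single limit $\lim_N \frac1N\sum_{n=1}^N 2^{n+k/2}\varphi(n)\varphi(n+k)$.

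The main computation is then to evaluate this limit. First I would substitute the closed form from Lemma~\ref{lemV1}, writing $\varphi(n)=\frac13 g(n)2^{-n/2}$ with $g(n)=3\cos(nt\log2)+\frac{\sin(nt\log2)}{\tan(t\log2)}$. The crucial cancellation is that $2^{n+k/2}\varphi(n)\varphi(n+k)=\frac19\, 2^{n+k/2}\cdot 2^{-n/2}2^{-(n+k)/2}g(n)g(n+k)=\frac19 g(n)g(n+k)$, so the exponential factors cancel \emph{exactly} and the summand is a bounded trigonometric expression in $n$, namely $\frac19 g(n)g(n+k)$. Writing $\theta=t\log2$ and expanding $g(n)g(n+k)$ as a product of the two trigonometric brackets produces terms of the form $\cos(n\theta)\cos((n+k)\theta)$, $\cos(n\theta)\sin((n+k)\theta)$, and so on. Each of these, via product-to-sum identities, splits into a term depending only on $k$ (constant in $n$) plus a term oscillating in $n$ like $\cos((2n+k)\theta)$ or $\sin((2n+k)\theta)$.

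The Ces\`aro average then kills every genuinely oscillating term: $\frac1N\sum_{n=1}^N\cos((2n+k)\theta)\to0$ and likewise for the sine, provided $2\theta=2t\log2\notin 2\pi\mathbb{Z}$, which holds on the relevant parameter range $0<t<\pi/\log2$. So only the $n$-independent pieces survive, and after averaging they multiply by $1$. Collecting these surviving constants and simplifying with the identities relating $\sin^2\theta$, $\tan\theta$ to $\lambda$ recorded before Lemma~\ref{lemV1} should yield $\frac{1}{18}b(\lambda)\cos(k\theta)=\frac1{18}b(\lambda)\cos(kt\log2)$, matching \eqref{5.30}. I would verify the coefficient by checking the special case $k=0$, where the surviving part reduces to the mean of $\frac19 g(n)^2=\frac19\bigl(3\cos n\theta+\frac{\sin n\theta}{\tan\theta}\bigr)^2$, whose average over $n$ is $\frac19\cdot\frac12\bigl(9+\frac{1}{\tan^2\theta}\bigr)=\frac1{18}\bigl(9+\cot^2\theta\bigr)$; reconciling this with $\frac1{18}b(\lambda)=\frac1{18}\bigl(8+\frac1{\sin^2\theta}\bigr)$ uses $9+\cot^2\theta=8+(1+\cot^2\theta)=8+\frac1{\sin^2\theta}$, confirming the normalization.

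The step I expect to be the main obstacle is the cross terms in $g(n)g(n+k)$ that mix a cosine with a sine, i.e. terms like $\cos(n\theta)\sin((n+k)\theta)$, which under product-to-sum produce an $n$-independent \emph{sine} contribution $\propto\sin(k\theta)$ alongside an oscillating part. The delicate point is showing these surviving $\sin(k\theta)$ pieces cancel between the two cross terms (by the symmetry $n\leftrightarrow n+k$ in the product), leaving only the even $\cos(k\theta)$ dependence asserted in \eqref{5.31}; I would track the signs carefully here, since this is where the answer could acquire a spurious imaginary/odd component if a sign were mishandled. Everything else is a bounded, routine application of Ces\`aro averaging of trigonometric polynomials.
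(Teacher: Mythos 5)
Your proposal is correct and follows essentially the same route as the paper's proof: substitute the closed form from Lemma~\ref{lemV1}, observe that the powers of $2$ cancel exactly so the summand is a bounded trigonometric expression, and let the Ces\`aro average kill the oscillating terms (the paper phrases this step via angle-addition and the limits of $\frac1N\sum\cos^2 n\alpha$, $\frac1N\sum\sin^2 n\alpha$, $\frac1N\sum\cos n\alpha\sin n\alpha$, which is the same computation as your product-to-sum splitting). The $\sin(kt\log 2)$ cross terms you flag as the delicate point do cancel exactly as you predict---in the paper's displayed expansion they appear as $\frac{3\sin(kt\log2)}{\tan(t\log2)}-\frac{3\sin(kt\log2)}{\tan(t\log2)}$---and your boundary-term argument for the $j$-shift correctly fills in what the paper dismisses as ``easy to see.''
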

\begin{proof}
It is easy to see that \eqref{5.31} is independent of $j$, so we take $j=0$.  Then by \eqref{5.29} 
\begin{eqnarray}
2^{n+\frac{k}{2}}\varphi(n)\varphi(n{+}k)=\frac{1}{9}\left(3\cos(nt\log2){+}\frac{\sin(nt\log2)}{\tan(t\log2)}\right)\left(3\cos(nt\log2)\cos(kt\log2)\right.\hspace{1cm}\notag\\
\left.-3\sin(nt\log2)\sin(kt\log2)+\frac{\sin(nt\log2)\cos(kt\log2)}{\tan(t\log2)}+\frac{\cos(nt\log2)\sin(kt\log2)}{\tan(t\log2)}\right).\notag
\end{eqnarray}
Now use the following identities
\begin{equation*}
\displaystyle\lim_{N\to\infty}\frac{1}{N}\Sum_{n=1}^N\cos^2n\alpha=\displaystyle\lim_{N\to\infty}\frac{1}{N}\Sum_{n=1}^N\sin^2 n\alpha=\frac{1}{2} \end{equation*}
and
\begin{equation*}
\displaystyle\lim_{N\to\infty}\frac{1}{N}\Sum_{n=1}^N\cos n\alpha \sin n\alpha=0 \end{equation*}
to see that the limit in \eqref{5.31} equals
\begin{equation*}
\frac{1}{18}\left(9\cos(kt\log2)+\frac{3\sin(kt\log2)}{\tan(t\log2)}-\frac{3\sin(kt\log2)}{\tan(t\log2)}+\frac{\cos(kt\log2)}{\tan^2(t\log2)}\right)=\frac{1}{18}b(\lambda)\cos(kt\log2).
\end{equation*}
\end{proof}
\begin{lemma}\label{lemV3}
For any $\lambda$ in the interior of $\Sigma$ and $x_1\in\G$, $<P_\lambda \delta_{x_1},P_\lambda \delta_{x_1}>_M$ exists and is independent of the base point $x_0$, and
\begin{equation}\label{5.32}
<P_\lambda \delta_{x_1},P_\lambda \delta_{x_1}>_M=\frac{1}{12}b(\lambda).\end{equation}
\end{lemma}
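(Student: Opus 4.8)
The plan is to reduce the whole computation to the Cesàro average already established in Lemma~\ref{lemV2}. First I would note that $P_\lambda\delta_{x_1}(x)=\varphi(d(x,x_1))$ (with $\varphi=\varphi_{\frac12+it}$), and that by the explicit formula in Lemma~\ref{lemV1} this $\varphi$ is real-valued. Consequently the integrand in the modified mean \eqref{5.28} is simply $\varphi(d(x,x_1))^2$, and the quantity to evaluate is $\lim_{N\to\infty}\frac1N\sum_{d(x,x_0)\le N}\varphi(d(x,x_1))^2$.

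Next I would compute the limit for the special choice $x_0=x_1$. In the $3$-regular tree the sphere $\{x:d(x,x_1)=m\}$ contains exactly $3\cdot2^{m-1}$ vertices for $m\ge1$ (and one vertex for $m=0$), and the ball $\{x:d(x,x_1)\le N\}$ is the disjoint union of these spheres for $0\le m\le N$. Grouping the sum by the value $m=d(x,x_1)$ therefore turns it into $\frac1N\varphi(0)^2+\frac32\cdot\frac1N\Sum_{m=1}^N 2^m\varphi(m)^2$. The first term is $O(1/N)$ and drops out, while the remaining sum is precisely the $k=0$ instance of Lemma~\ref{lemV2}; together with the bookkeeping factor $\frac32\cdot\frac1{18}=\frac1{12}$ this yields the claimed value $\frac1{12}b(\lambda)$, and in particular shows the limit exists.

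It then remains to show the value is unchanged for an arbitrary base point $x_0$ with $d(x_0,x_1)=D$. Here I would compare the ball $B_N(x_0)$ with $B_N(x_1)$: since $|d(x,x_0)-d(x,x_1)|\le D$, every vertex of the symmetric difference $B_N(x_0)\triangle B_N(x_1)$ lies in one of the spheres $\{d(\cdot,x_1)=m\}$ with $N-D<m\le N+D$, so at most $2D$ spheres are affected. The per-sphere contribution is $\varphi(m)^2\cdot(3\cdot2^{m-1})=\frac16\bigl(3\cos(mt\log2)+\tfrac{\sin(mt\log2)}{\tan(t\log2)}\bigr)^2$, which is uniformly bounded in $m$ because $\lambda$ lies in the interior of $\Sigma$ (so that $\sin(t\log2)\neq0$ and $b(\lambda)$ is finite). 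Hence the discrepancy between the two sums is $O(D)$ uniformly in $N$, and it vanishes after dividing by $N$; the limit is therefore independent of $x_0$ and equals $\frac1{12}b(\lambda)$.

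I expect the only delicate point to be this last step. The growth of the spheres ($\sim2^m$) exactly cancels the decay $\varphi(m)\sim2^{-m/2}$ of the eigenfunction, so each sphere contributes an order-one amount and the sum over $N$ spheres is $O(N)$; it is precisely the nonstandard $\frac1N$ normalization of \eqref{5.28} that makes the boundary spheres negligible and the interior average finite. Everything else is a direct appeal to Lemmas~\ref{lemV1} and \ref{lemV2}.
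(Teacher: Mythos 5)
Your proposal is correct and follows essentially the same route as the paper: compute the limit with $x_0=x_1$ using the sphere count $3\cdot 2^{n-1}$ and the $k=0$ case of Lemma~\ref{lemV2} (with the factor $\tfrac32\cdot\tfrac1{18}=\tfrac1{12}$), and establish base-point independence from the decay $\varphi(n)=O(2^{-n/2})$ together with the $1/N$ normalization. Your symmetric-difference bookkeeping is just a slightly more explicit version of the paper's nested-ball comparison $B_{N-k}(x_0')\subseteq B_N(x_0)\subseteq B_{N+k}(x_0')$, so there is no substantive difference.
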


\begin{proof}
$P_\lambda \delta_{x_1}(x)=\varphi(d(x,x_1))$ and $\varphi(n)=O(2^{-n/2})$ by \eqref{5.29}.  It follows easily that the limit, if it exists, is independent of the choice of $x_0$, since if $d(x_0,x_0')=k$ then $B_{n-k}(x_0')\subseteq B_n(x_0)\subseteq B_{n+k}(x_0')$, and the division by $N$ in \eqref{5.28} makes the difference go to zero as $N\to\infty$.  We will prove the existence of the limit by computing \eqref{5.32} with $x_0=x_1$.

Note that there are exactly $3\cdot2^{n-1}$ points $x$ with $d(x,x_0)=n$ for $n\geq1$, and we can ignore the point $x=x_1$ in computing the limit.  Thus
\begin{equation*}
<P_\lambda \delta_{x_1},P_\lambda \delta_{x_1}>_M=\displaystyle\lim_{N\to\infty}\frac{3}{2N}\Sum_{n=1}^N2^n\varphi(n)^2=\frac{1}{12}b(\lambda) \end{equation*}
by Lemma \ref{lemV2}.
\end{proof}

\begin{lemma}\label{lemV4}
Suppose $d(x_1,x_2)=k$ and $\lambda$ is in the interior of $\Sigma$.  Then $<P_\lambda \delta_{x_1},P_\lambda \delta_{x_2}>_M$ exists and is independent of the base point $x_0$, and 
\begin{equation}\label{5.33}
<P_\lambda \delta_{x_1},P_\lambda \delta_{x_2}>_M=\frac{1}{12}b(\lambda)\varphi(k).
\end{equation}
\end{lemma}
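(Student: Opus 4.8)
The plan is to follow the template of Lemma~\ref{lemV3}, reducing the claim to a weighted vertex count on the tree and then feeding that count into Lemma~\ref{lemV2}. Since $P_\lam \delta_{x_1}(x)=\varphi(d(x,x_1))$ and $P_\lam\delta_{x_2}(x)=\varphi(d(x,x_2))$ with $\varphi(n)=O(2^{-n/2})$ by \eqref{5.29}, the same ball-comparison argument as in Lemma~\ref{lemV3} shows that, once the limit exists, it is independent of $x_0$; so I would fix $x_0=x_1$ and study
$$\frac1N\Sum_{n=0}^N \varphi(n)\Sum_{d(x,x_1)=n}\varphi(d(x,x_2)).$$
Choosing $x_0=x_1$ makes the outer weight simply $\varphi(n)$, and the finitely many vertices on the geodesic itself (those with $d(x,x_1)\le k$) contribute an $O(1)$ total that drops out after division by $N$.

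The combinatorial heart is to classify the $3\cdot2^{n-1}$ vertices $x$ with $d(x,x_1)=n$ by where the geodesic $[x_1,x]$ first leaves the geodesic $x_1=p_0,p_1,\dots,p_k=x_2$. A vertex branching off at $p_j$ has $d(x,x_2)=n+k-2j$, while a vertex whose geodesic passes through $x_2$ has $d(x,x_2)=n-k$. I would then count the families: $2^n$ vertices branch at $p_0$ (distance $n+k$), there are $2^{n-j-1}$ branching at each interior $p_j$ for $1\le j\le k-1$ (distance $n+k-2j$), and $2^{n-k}$ pass through $x_2$ (distance $n-k$). The identity $2^n+\Sum_{j=1}^{k-1}2^{n-j-1}+2^{n-k}=3\cdot2^{n-1}$ confirms that no vertex is omitted or double counted.

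Substituting and factoring out powers of $2$ to match the normalization $2^{n+\kappa/2}\varphi(n)\varphi(n+\kappa)$ of Lemma~\ref{lemV2} (with $\kappa=k-2j$ for the interior family), the decisive cancellation is that the interior exponent $-j-1-\tfrac12(k-2j)=-1-\tfrac{k}{2}$ is independent of $j$. Each family therefore contributes a clean multiple of $\tfrac{1}{18}b(\lam)\cos((k-2j)t\log2)$, with the shift-invariance (``independent of $j$'') built into Lemma~\ref{lemV2} being exactly what handles the through-$x_2$ term. Writing $\theta=t\log2$ and summing the three contributions, the claim reduces to
$$2\cos(k\theta)+\tfrac12\Sum_{j=1}^{k-1}\cos((k-2j)\theta)=\tfrac32\cos(k\theta)+\tfrac12\frac{\sin(k\theta)}{\tan\theta},$$
which I would settle by evaluating the telescoped Dirichlet sum $\Sum_{j=1}^{k-1}\cos((k-2j)\theta)=\sin((k-1)\theta)/\sin\theta$ and then applying $\sin((k-1)\theta)=\sin(k\theta)\cos\theta-\cos(k\theta)\sin\theta$; this yields precisely $\tfrac1{12}b(\lam)\varphi(k)$ in view of \eqref{5.29}.

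I expect the main obstacle to be the bookkeeping: pinning down the branch multiplicities and the distances $d(x,x_2)$ correctly, and tracking the powers of $2$ so that the $j$-dependence cancels as above. A cleaner but less self-contained alternative bypasses the count entirely: the function $x_2\mapsto\langle P_\lam\delta_{x_1},P_\lam\delta_{x_2}\rangle_M$ is radial about $x_1$ by homogeneity of the tree and, after interchanging $-\D_\G$ with the mean limit, is a $\lam$-eigenfunction, so by the uniqueness in Theorem~\ref{thm5.3} it equals its value at $x_1$ times $\varphi(d(x_1,x_2))$; that value is $\tfrac1{12}b(\lam)$ by Lemma~\ref{lemV3}. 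The only delicate point in this shortcut is justifying that $-\D_\G$, a local finite-difference operator, commutes with the Ces\`aro-type limit in \eqref{5.28}, which follows from base-point independence applied at $x_2$ and its neighbors.
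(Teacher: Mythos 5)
Your proof is correct and follows essentially the same route as the paper: the identical partition of the sphere $\{x : d(x,x_1)=n\}$ into the families of sizes $2^n$, $2^{n-j-1}$ ($1\le j\le k-1$), and $2^{n-k}$, the same reduction via Lemma~\ref{lemV2} using the $j$-independence of the exponent $-1-\tfrac{k}{2}$, and a trigonometric identity equivalent to the paper's $\sin(a)\sum_{j=0}^{k-1}\cos((k-2j)a)=\sin(ka)\cos(a)$. Your Dirichlet-sum formulation $\sum_{j=1}^{k-1}\cos((k-2j)\theta)=\sin((k-1)\theta)/\sin\theta$ closes the computation just as the paper's version does.
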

\begin{proof}
The proof of independence of the base point is the same as in Lemma \ref{lemV3}, so we compute the limit for $x_0=x_1$.  Except for a few points when $n$ is small that don't enter into the limit, we may partition the points with $d(x,x_1)=n$ as follows:

$2^n$ points with $d(x,x_2)=n+k$,

$2^{n-j-1}$ points with $d(x,x_2)=n+k-2j$ for $1\leq j \leq k-1$,

$2^{n-k}$ points with $d(x,x_k)=n-k$.

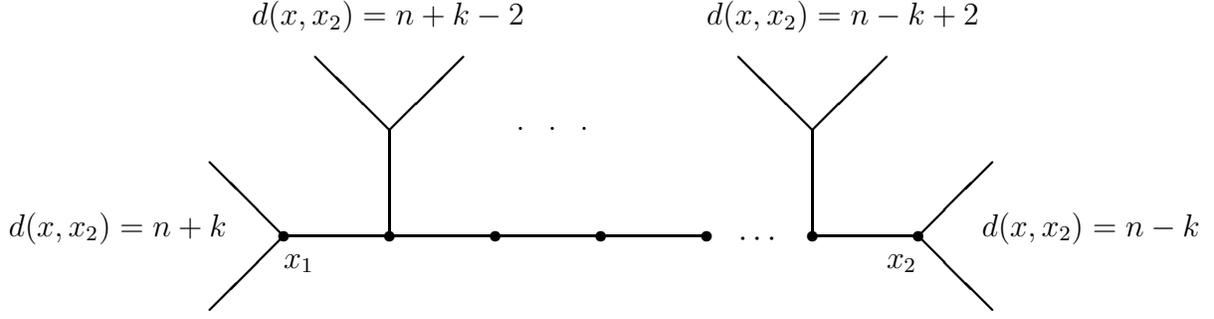
\begin{figure}[htb]
\begin{center}
\begin{picture}(150,120)(-75,-20)\thicklines
\setlength{\unitlength}{40pt}
\put(-3,0){\line(1,0){4}}
\put(2,0){\line(1,0){1}}
\put(1.3,-.1){$\cdots$}
\multiput(-.8,1)(.3,0){3}{$.$}
\put(3,0){\line(1,1){.7}}
\put(3,0){\line(1,-1){.7}}
\put(2,0){\line(0,1){1}}
\put(-2,0){\line(0,1){1}}
\put(-3,0){\line(-1,1){.7}}
\put(-3,0){\line(-1,-1){.7}}
\put(-2,1){\line(-1,1){.7}}
\put(-2,1){\line(1,1){.7}}
\put(2,1){\line(-1,1){.7}}
\put(2,1){\line(1,1){.7}}
\multiput(-3,0)(1,0){5}{\circle*{.1}}
\multiput(2,0)(1,0){2}{\circle*{.1}}
\put(3.6,0){$d(x,x_2)=n-k$}
\put(-5.6,0){$d(x,x_2)=n+k$}
\put(-3.3,2){$d(x,x_2)=n+k-2$}
\put(1,2){$d(x,x_2)=n-k+2$}
\put(-3,-.3){$x_1$}
\put(2.7,-.3){$x_2$}
\end{picture}
\end{center}
\caption{Partition of points $x$ with $d(x,x_1)=n$.}
\end{figure}

This implies
\begin{eqnarray}
&<P_\lambda \delta_{x_1},P_\lambda \delta_{x_2}>_M\hspace{12cm}\notag\\
&=\displaystyle\lim_{N\to\infty}\frac{1}{N}\Sum_{n=1}^N \left(2^n\varphi(n)\varphi(n+k)+\frac{1}{2}\Sum_{j=1}^{k-1} 2^{n-j}\varphi(n)\varphi(n+k-2j)+2^{n-k}\varphi(n)\varphi(n-k)\right)\notag\\
&=\frac{1}{18}b(\lambda)2^{-k/2}\left(\cos(kt\log2)+\frac{1}{2}\Sum_{j=1}^{k-1}\cos(k-2j)t\log2+\cos(kt\log2)\right)\hspace{2cm}\notag
\end{eqnarray}
by Lemma \ref{lemV2}.

However, the trigonometric identity $\sin(a)\Sum_{j=0}^{k-1}\cos(k-2j)a=\sin(ka)\cos(a)$ implies
\begin{eqnarray}
&2\cos(kt\log2)+\frac{1}{2}\Sum_{j=1}^{k-1}\cos(k-2j)t\log2  \notag \\
&=\frac{3}{2}\cos(kt\log2)+\frac{1}{2}\Sum_{j=0}^{k-1}\cos(k-2j)t\log2 \notag \\
&=\frac{3}{2}\left( \cos(kt\log2)+\frac{1}{3} \frac{\sin(kt\log2)}{\tan(t\log2)}\right)=\frac{3}{2} \varphi(k)2^{k/2} \notag
\end{eqnarray}
by Lemma \ref{lemV1}, which implies \eqref{5.33}.
\end{proof}

\begin{theorem}\label{thmV5}
Suppose $f$ has finite support.  Then
\begin{equation} \label{5.34}
<P_\lambda f, f>=12b(\lambda)^{-1}<P_\lambda f, P_\lambda f>_M. \end{equation}
\end{theorem}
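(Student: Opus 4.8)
The plan is to exploit sesquilinearity of both pairings and thereby reduce everything to the rank-one matrix elements already computed in Lemmas~\ref{lemV3} and~\ref{lemV4}. Since $f$ has finite support, write $f=\sum_{x_1}f(x_1)\delta_{x_1}$ (a finite sum); then $P_\lambda f=\sum_{x_1}f(x_1)P_\lambda\delta_{x_1}$ as well, and each $P_\lambda\delta_{x_1}(x)=\varphi(d(x,x_1))$ with $\varphi=\varphi_{\frac12+it}$ real-valued by Lemma~\ref{lemV1}.

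First I would expand the left-hand side. Because $f$ is finitely supported, the $\ell^2(\G)$ pairing of $P_\lambda f$ against $f$ is an honest finite sum, and using $\langle P_\lambda\delta_{x_1},\delta_{x_2}\rangle=P_\lambda\delta_{x_1}(x_2)=\varphi(d(x_1,x_2))$ I get
$$\langle P_\lambda f,f\rangle=\sum_{x_2}P_\lambda f(x_2)\,\overline{f(x_2)}=\sum_{x_1,x_2}f(x_1)\overline{f(x_2)}\,\varphi(d(x_1,x_2)).$$
For the right-hand side, $P_\lambda f$ is a finite linear combination of the functions $P_\lambda\delta_{x_1}$ (each of growth $O(2^{-d(\cdot,x_1)/2})$), so the modified mean $\langle\cdot,\cdot\rangle_M$ of \eqref{5.28} distributes over this finite sum; the interchange of the finite double sum with the limit in \eqref{5.28} is immediate. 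Hence
$$\langle P_\lambda f,P_\lambda f\rangle_M=\sum_{x_1,x_2}f(x_1)\overline{f(x_2)}\,\langle P_\lambda\delta_{x_1},P_\lambda\delta_{x_2}\rangle_M,$$
and substituting the value $\langle P_\lambda\delta_{x_1},P_\lambda\delta_{x_2}\rangle_M=\frac1{12}b(\lambda)\varphi(d(x_1,x_2))$ from Lemma~\ref{lemV4} (with Lemma~\ref{lemV3} covering the diagonal $x_1=x_2$) yields
$$\langle P_\lambda f,P_\lambda f\rangle_M=\frac1{12}b(\lambda)\sum_{x_1,x_2}f(x_1)\overline{f(x_2)}\,\varphi(d(x_1,x_2))=\frac1{12}b(\lambda)\,\langle P_\lambda f,f\rangle.$$
Rearranging gives exactly \eqref{5.34}.

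I do not expect a genuine obstacle here: all of the analytic work is already contained in Lemma~\ref{lemV4}, and the remaining argument is purely formal. The only points I would be careful to verify are that $\varphi$ is truly real-valued (so that no complex conjugate intrudes into the kernel $\langle P_\lambda\delta_{x_1},\delta_{x_2}\rangle$), and that pulling the finite double sum outside the limit defining $\langle\cdot,\cdot\rangle_M$ is legitimate --- both of which follow at once from Lemma~\ref{lemV1} and the finiteness of $\mathrm{supp}\,f$. As a consistency check, the formula of Lemma~\ref{lemV4} is real and symmetric in $x_1,x_2$, matching the Hermitian symmetry of both pairings.
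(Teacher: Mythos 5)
Your proof is correct and is essentially the paper's own argument: the paper likewise notes $\langle P_\lambda\delta_{x_1},\delta_{x_2}\rangle=\varphi(d(x_1,x_2))$, rewrites the identity of Lemma~\ref{lemV4} as $\langle P_\lambda\delta_{x_1},\delta_{x_2}\rangle=12b(\lambda)^{-1}\langle P_\lambda\delta_{x_1},P_\lambda\delta_{x_2}\rangle_M$, and concludes by linearity. Your write-up merely makes explicit the sesquilinear expansion and the (legitimate) interchange of the finite double sum with the limit defining $\langle\cdot,\cdot\rangle_M$, which the paper leaves implicit.
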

\begin{proof}
Since $<P_\lambda \delta_{x_1}, \delta_{x_2}>=\varphi(d(x_1,x_2))$ we can rewrite \eqref{5.33} as $$<P_\lambda \delta_{x_1},\delta_{x_2}>=12b(\lambda)^{-1}<P_\lambda \delta_{x_1},P_\lambda \delta_{x_1}>_M$$ and \eqref{5.34} follows by linearity. \end{proof}
\begin{corollary}\label{corV6}
For $f\in\ell^2(\G)$, for $\mu$ a.e. $\lambda$, $<P_\lambda f,P_\lambda f>_M$ exists, and 
\begin{equation}\label{5.35}
||f||^2_{\ell^2(\G)}=\int_\Sigma<P_\lambda f, P_\lambda f>_M 12b(\lambda)^{-1}d\mu(\lambda).
\end{equation}
\end{corollary}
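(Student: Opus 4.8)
The plan is to reduce \eqref{5.35} to the abstract Plancherel identity \eqref{4.16} together with Theorem~\ref{thmV5}, and then to remove the finite-support hypothesis by density. For $f$ of finite support the corollary is immediate: the integrand of \eqref{4.16} is $\langle P_\lambda f,f\rangle$, and substituting the pointwise identity \eqref{5.34} turns it into $12\,b(\lambda)^{-1}\langle P_\lambda f,P_\lambda f\rangle_M$, which is exactly the integrand of \eqref{5.35}. Here the mean $\langle P_\lambda f,P_\lambda f\rangle_M$ exists by bilinearity, since for finitely supported $f$ the function $P_\lambda f=\Sum_{b}\varphi(d(\cdot,b))f(b)$ is a finite linear combination of radial eigenfunctions, for which Lemmas~\ref{lemV3} and \ref{lemV4} compute the mean explicitly.

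To reach a general $f\in\ell^2(\G)$ I would fix finitely supported $f_n\to f$ in $\ell^2(\G)$ and apply the finite-support case to the difference $f_n-f_m$. Since $g\mapsto P_\lambda g$ is linear and $\langle\cdot,\cdot\rangle_M$ is a nonnegative sesquilinear form on the span of the radial eigenfunctions, this gives
\[
\int_\Sigma\langle P_\lambda(f_n-f_m),P_\lambda(f_n-f_m)\rangle_M\,12\,b(\lambda)^{-1}\,d\mu(\lambda)=\|f_n-f_m\|_{\ell^2(\G)}^2\to0.
\]
Thus $\lambda\mapsto P_\lambda f_n$ is Cauchy in the direct-integral space with fibre seminorm $\langle\cdot,\cdot\rangle_M^{1/2}$ and scalar weight $12\,b(\lambda)^{-1}d\mu$, so after passing to a subsequence the means $\langle P_\lambda f_n,P_\lambda f_n\rangle_M$ converge for $\mu$-a.e.\ $\lambda$. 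On the other hand, for each fixed $a$ the Besicovitch derivative \eqref{4.3} defines $P_\lambda f(a)$ for $\mu$-a.e.\ $\lambda$, and since $\G$ is countable $P_\lambda f$ is a genuine pointwise $\lambda$-eigenfunction for $\mu$-a.e.\ $\lambda$; this is the function whose ball-average \eqref{5.28} the corollary asserts to exist.

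The crux is to match these two limits, i.e.\ to show that the a.e.\ limit of $\langle P_\lambda f_n,P_\lambda f_n\rangle_M$ really is the ball-average \eqref{5.28} of the pointwise function $P_\lambda f$. This is delicate because $\varphi(n)=O(2^{-n/2})$ while the sphere of radius $n$ carries $\sim2^n$ points, so the series \eqref{4.5} for $P_\lambda f(a)$ is not absolutely convergent and one cannot naively interchange the $N\to\infty$ averaging with the $n\to\infty$ truncation. I would resolve this by first noting that $\langle P_\lambda f_n,f_n\rangle\to\langle P_\lambda f,f\rangle$ in $L^1(d\mu)$ — the $\langle P_\lambda f_n,f_n\rangle$ are nonnegative spectral densities whose total masses $\|f_n\|^2$ converge and which converge setwise, so a Scheffé/Vitali-type argument applies — whence on a common subsequence Theorem~\ref{thmV5} forces the a.e.\ limit of $\langle P_\lambda f_n,P_\lambda f_n\rangle_M$ to equal $12^{-1}b(\lambda)\langle P_\lambda f,f\rangle$. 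A uniform-in-$n$ tail estimate coming from the same $O(2^{-n/2})$ decay is then used to control the discrepancy between the ball-average of $P_\lambda f_n$ and that of $P_\lambda f$, identifying the limit with $\langle P_\lambda f,P_\lambda f\rangle_M$ and establishing its existence. Granting $\langle P_\lambda f,P_\lambda f\rangle_M=12^{-1}b(\lambda)\langle P_\lambda f,f\rangle$ for $\mu$-a.e.\ $\lambda$, integrating against $d\mu$ and invoking \eqref{4.16} once more yields \eqref{5.35} for every $f\in\ell^2(\G)$. I expect this interchange-of-limits step — controlling the mean of a non-absolutely-convergent eigenfunction — to be the main obstacle.
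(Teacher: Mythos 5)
Your proposal follows essentially the same route as the paper: for finitely supported $f$ the identity \eqref{5.35} is obtained by substituting \eqref{5.34} into \eqref{4.16}, and the general case $f\in\ell^2(\G)$ is then reached by approximation, which the paper dismisses in one phrase as ``routine limiting arguments.'' The only difference is one of exposition --- you spell out (correctly, and with appropriate caution about the non-absolutely-convergent sums and the identification of the a.e.\ limit with the ball-average of $P_\lambda f$) the density argument that the paper leaves implicit.
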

\begin{proof}
For $f$ of finite support, \eqref{5.35} follows from \eqref{5.34} and \eqref{4.16}.  It then follows for $f\in \ell^2(\G)$ by routine limiting arguments.
\end{proof}

To complete the solution of problem $(c)$ for this example we need to transfer the result from $\G$ to $\Go$.  Define the modified mean inner product on $\Go$ by \eqref{5.28} again, where $f$ and $g$ are functions on $\Go$ and $x$ and $x_0$ vary in $\Go$.

\begin{lemma}\label{lemV7}
For any integers $k$ and $j$,
\begin{eqnarray} \label{5.36}
\displaystyle\lim_{N\to\infty} \frac{1}{N} \Sum_{n=1}^N 2^{n+\frac{k}{2}}\psi(n)\psi(n+k)&=\displaystyle\lim_{N\to\infty} \frac{1}{N} \Sum_{n=1}^N 2^{n+j+\frac{k}{2}}\psi(n+j)\psi(n+j+k)\notag \\
&=\frac{(6-\lambda)^2}{36}b(\lambda)\cos(kt\log2).
\end{eqnarray}
\end{lemma}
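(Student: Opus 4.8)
The plan is to reduce \eqref{5.36} entirely to Lemma~\ref{lemV2} via the relation $\psi(n)=2\varphi(n)+\varphi(n+1)+\varphi(n-1)$ recorded in Lemma~\ref{lem5.8}. First I would dispose of the $j$-independence: the two limits in \eqref{5.36} differ only by the substitution $n\mapsto n+j$ in the summand (which also accounts for the extra factor $2^{j}$ in the weight), and shifting the summation range of a Cesàro mean by the fixed amount $j$ does not change its value. This is the same observation used in Lemma~\ref{lemV2}, so it remains only to evaluate the left-hand limit.

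Writing $\psi(n)=\Sum_{p}c_p\varphi(n+p)$ with $p$ ranging over $\{-1,0,1\}$, $c_0=2$, and $c_{\pm1}=1$, I would expand
$$2^{n+k/2}\psi(n)\psi(n+k)=\Sum_{p,q}c_pc_q\,2^{n+k/2}\varphi(n+p)\varphi(n+k+q),$$
a sum of nine terms. Each term has the shape $2^{m+g/2}\varphi(m)\varphi(m+g)$ with $m=n+p$ and gap $g=k+q-p$, once one pulls out the scalar $2^{-(p+q)/2}$ via the identity $2^{n+k/2}=2^{-(p+q)/2}\,2^{(n+p)+g/2}$. Applying Lemma~\ref{lemV2} to each term, using its $j$-independence to absorb the shift by $p$, gives
$$\lim_{N\to\infty}\frac1N\Sum_{n=1}^N 2^{n+k/2}\psi(n)\psi(n+k)=\frac{1}{18}b(\lam)\Sum_{p,q}w_pw_q\cos\big((k+q-p)\theta\big),$$
where $w_p=c_p2^{-p/2}$ and $\theta=t\log2$.

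It then remains to evaluate the finite trigonometric sum $\Sum_{p,q}w_pw_q\cos((k+q-p)\theta)$. Expanding $\cos((k+q-p)\theta)$ by the addition formula, the $\sin(k\theta)$-part carries the antisymmetric factor $\sin((q-p)\theta)$ and vanishes upon summing against the symmetric weights $w_pw_q$; the $\cos(k\theta)$-part factors as $\cos(k\theta)\big[(\Sum_p w_p\cos p\theta)^2+(\Sum_p w_p\sin p\theta)^2\big]$. A direct computation gives $\Sum_p w_p\cos p\theta=2+\tfrac{3}{\sqrt2}\cos\theta$ and $\Sum_p w_p\sin p\theta=-\tfrac{1}{\sqrt2}\sin\theta$, so the bracket equals $\tfrac92+6\sqrt2\cos\theta+4\cos^2\theta$. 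The main point, and the only genuine identity involved, is that this bracket equals $\tfrac12(6-\lam)^2$: since $\lam=3-2\sqrt2\cos(t\log2)$ (Proposition~\ref{prop5.7}) we have $6-\lam=3+2\sqrt2\cos\theta$, hence $(6-\lam)^2=9+12\sqrt2\cos\theta+8\cos^2\theta$ and $\tfrac12(6-\lam)^2=\tfrac92+6\sqrt2\cos\theta+4\cos^2\theta$. Substituting yields $\frac{1}{18}b(\lam)\cdot\tfrac12(6-\lam)^2\cos(k\theta)=\frac{(6-\lam)^2}{36}b(\lam)\cos(kt\log2)$, as claimed. I expect the only real obstacle to be this last algebraic identification of the prefactor with $\tfrac12(6-\lam)^2$; the rest is bookkeeping around Lemma~\ref{lemV2} and the symmetric/antisymmetric splitting.
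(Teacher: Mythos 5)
Your proof is correct and takes essentially the same route as the paper's: $j$-independence by shifting the Ces\`aro mean, expansion of $\psi(n)\psi(n+k)$ into the nine $\varphi\varphi$ cross terms, termwise application of Lemma~\ref{lemV2}, and the identity $6-\lambda=3+2\sqrt{2}\cos(t\log 2)$ to recognize the prefactor $\tfrac12(6-\lambda)^2$. The only difference is cosmetic: you factor the resulting trigonometric sum as $\bigl(\Sum_p w_p\cos p\theta\bigr)^2+\bigl(\Sum_p w_p\sin p\theta\bigr)^2$, which makes the perfect square automatic, whereas the paper groups terms by gap and uses sum-to-product identities to arrive at the same square $\bigl(\tfrac{3}{\sqrt{2}}+2\cos t\log 2\bigr)^2$.
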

\begin{proof}
As in the proof of Lemma \ref{lemV2}, it is clear that \eqref{5.36} is independent of $j$, so we may take $j=0$.  Since $\psi(k)=2\varphi(k)+\varphi(n-1)+\varphi(n+1)$ we may reduce \eqref{5.36} to \eqref{5.31} as follows:
\begin{eqnarray}
\displaystyle\lim_{N\to\infty}&\frac{1}{N}\Sum_{n=1}^N 2^{n+\frac{k}{2}}\psi(n)\psi(n+k)\hspace{9cm}& \notag \\
=\displaystyle\lim_{N\to\infty}&\hspace{-13cm}\frac{1}{N}\Sum_{n=1}^N & \hspace{-13cm} 2^{n+\frac{k}{2}} \left(2\varphi(n)+\varphi(n{-}1)+\varphi(n{+}1)\right)\left(2\varphi(n{+}k)+\varphi(n{+}k{-}1)+\varphi(n{+}k{+}1)\right) \notag\\
&\hspace{-4cm}=\frac{b(\lambda)}{18}[(4+2+\frac{1}{2})\cos(kt\log2)+2(\sqrt{2}+\frac{1}{\sqrt{2}})(\log(k+1)t\log2&\hspace{-3cm}+\log(k-1)t\log2)\notag\\
&&\hspace{-6cm}+\cos(k+2)t\log2+\cos(k-2)t\log2]\notag\\
=&\frac{b(\lambda)}{18} \cos kt\log2[(4+2+\frac{1}{2})+\psi(\sqrt{2}+\frac{1}{\sqrt{2}})\cos t\log2+2\cos2t\log2]&\notag\\
&\hspace{-4cm}=\frac{b(\lambda)}{18}\cos kt\log2(\frac{3}{\sqrt{2}}+2\cos t\log2)^2& \notag
\end{eqnarray}
and \eqref{5.36} follows since $\frac{3}{\sqrt{2}}+2\cos t\log2 =\frac{(6-\lambda)}{\sqrt{2}}$.
\end{proof}  

\begin{lemma}\label{lemV8}
For any $\lambda$ in the interior of $\Sigma$ and $x_1\in \Go$, $<\tilde{P_\lambda}\delta_{x_1}, \tilde{P_\lambda}\delta_{x_1}>_M$ exists and is independent of the base point $x_0$, and
\begin{equation} \label{5.37}
<\tilde{P_\lambda}\delta_{x_1},\tilde{P_\lambda}\delta_{x_1}>_M=\frac{b(\lambda)}{162}.
\end{equation}
\end{lemma}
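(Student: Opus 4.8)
The plan is to follow the template of Lemma~\ref{lemV3}, replacing $\G$ by $\Go$ and $\varphi$ by $\psi=\psi_{\frac12+it}$, and invoking Lemma~\ref{lemV7} with $k=0$ in place of Lemma~\ref{lemV2}. By Theorem~\ref{thm5.4} we have $\tilde{P_\lambda}\delta_{x_1}(x)=\frac{1}{3(6-\lam)}\psi(d(x,x_1))$. Since $\varphi(n)=O(2^{-n/2})$ by \eqref{5.29} and $\psi(n)=2\varphi(n)+\varphi(n-1)+\varphi(n+1)$, we also get $\psi(n)=O(2^{-n/2})$, hence $|\tilde{P_\lambda}\delta_{x_1}(x)|^2=O(2^{-d(x,x_1)})$.

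First I would dispose of the base-point independence exactly as in Lemma~\ref{lemV3}: if $d(x_0,x_0')=k$ then $B_{n-k}(x_0')\subseteq B_n(x_0)\subseteq B_{n+k}(x_0')$ in $\Go$, and because the summand decays like $2^{-d(x,x_1)}$ while each distance shell has $O(2^n)$ points, the difference between the two ball-sums stays bounded uniformly in $N$; dividing by $N$ sends it to $0$. This justifies evaluating the limit with the convenient choice $x_0=x_1$, and shows simultaneously that the limit exists once the explicit value is computed.

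The one genuinely new ingredient is the count of vertices of $\Go$ at distance $n$ from a fixed vertex. Here $\Go$ is the edge (line) graph of the $3$-regular tree $\G$, so it is $4$-regular but has triangles (one per vertex of $\G$), and I expect this count to be the step requiring the most care. A direct BFS argument does the job: the base vertex $e_0=\{a_0,b_0\}$ has $4$ neighbors, and for $n\ge2$ any vertex $e=\{u,w\}$ at distance $n-1$ (with $u$ closer to $e_0$) has its four neighbors split as one parent edge (level $n-2$) and one sibling edge (level $n-1$) sharing the vertex $u$, together with exactly two new ``child'' edges at $w$ (level $n$). The tree structure of $\G$ prevents any coincidences among the children of distinct level-$(n-1)$ vertices, so the level counts double from $4$ onward, giving $2^{n+1}$ vertices at distance $n$ for $n\ge1$.

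With this count, taking $x_0=x_1$ and discarding the irrelevant $n=0$ term,
\begin{equation*}
\langle \tilde{P_\lambda}\delta_{x_1},\tilde{P_\lambda}\delta_{x_1}\rangle_M=\lim_{N\to\infty}\frac{1}{N}\Sum_{n=1}^N 2^{n+1}\cdot\frac{1}{9(6-\lam)^2}\psi(n)^2=\frac{2}{9(6-\lam)^2}\lim_{N\to\infty}\frac{1}{N}\Sum_{n=1}^N 2^n\psi(n)^2.
\end{equation*}
Lemma~\ref{lemV7} with $k=0$ gives $\lim_{N\to\infty}\frac1N\Sum_{n=1}^N 2^n\psi(n)^2=\frac{(6-\lam)^2}{36}b(\lam)$, so the $(6-\lam)^2$ factors cancel and the expression collapses to $\frac{2}{9\cdot36}b(\lam)=\frac{b(\lam)}{162}$, which is \eqref{5.37}. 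Thus the main obstacle is purely combinatorial (the distance count in the line graph), after which the analytic content is supplied verbatim by Lemma~\ref{lemV7}.
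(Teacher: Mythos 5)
Your proof is correct and takes essentially the same route as the paper's: base-point independence exactly as in Lemma~\ref{lemV3}, the count of $4\cdot2^{n-1}=2^{n+1}$ vertices of $\Go$ at distance $n\geq1$ from $x_1$, the formula $\tilde{P_\lambda}\delta_{x_1}(x)=\frac{1}{3(6-\lambda)}\psi(d(x,x_1))$, and Lemma~\ref{lemV7} with $k=0$ to evaluate the limit, with the $(6-\lambda)^2$ factors cancelling to yield $b(\lambda)/162$. The only (harmless) difference is that you supply an explicit BFS justification of the shell count in the line graph, which the paper simply asserts.
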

\begin{proof}
The proof that the limit is independent of the base point is the same as in Lemma \ref{lemV3}, so we compute \eqref{5.36} with $x_0=x_1$.  Note that for $n\geq1$ there are exactly $4\cdot 2^{n-1}$ points $x$ in $V_0$ with $d(x,x_1)=n$.  
For such points $\tilde{P_\lambda}\delta_{x_1}(x)=\frac{1}{6-\lambda}\frac{1}{3}\psi(n)=\frac{1}{6-\lambda}\frac{1}{3}\left(2\varphi(n)+\varphi(n{-}1)+\varphi(n{+}1)\right)$
and so 
\begin{equation*}
<\tilde{P_\lambda}\delta_{x_1},\tilde{P_\lambda}\delta_{x_1}>_M=\frac{1}{(6-\lambda)^2}\cdot\frac{2}{9}\displaystyle\lim_{N\to\infty} \frac{1}{N} \Sum_{n=1}^\infty 2^n\left(2\varphi(n)+\varphi(n-1)+\varphi(n+1)\right)^2  \end{equation*}
and \eqref{5.37} follows from \eqref{5.36}.
\end{proof}

\begin{lemma}\label{lemV9}
Suppose $d(x_1,x_2)=k$ and $\lambda$ is in the interior of $\Sigma$.  Then $<\tilde{P_\lambda}\delta_{x_1},\tilde{P_\lambda}\delta_{x_2}>_M$ exists and is independent of the base point, and 
\begin{equation}
<\tilde{P_\lambda}\delta_{x_1},\tilde{P_\lambda}\delta_{x_2}>_M=\frac{b(\lambda)}{36}\cdot\frac{1}{3(6-\lambda)} \psi
(k). \end{equation}
\end{lemma}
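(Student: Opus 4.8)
The plan is to reproduce the strategy of Lemma~\ref{lemV4}, reducing the two–variable Cesàro sum to the one–dimensional sums already evaluated in Lemma~\ref{lemV7}, and then to pin the value by the ``radial eigenfunction'' shortcut of Lemma~\ref{lem5.3}. Write $g(x_2)=\langle\tilde{P_\lambda}\delta_{x_1},\tilde{P_\lambda}\delta_{x_2}\rangle_M$ and recall from Theorem~\ref{thm5.4} that $\tilde{P_\lambda}\delta_{x_i}(x)=\frac{1}{3(6-\lambda)}\psi(d(x,x_i))$, where $\psi=\psi_{\frac12+it}$ obeys $\psi(n)=2\varphi(n)+\varphi(n{+}1)+\varphi(n{-}1)$ and, by the analogue of Lemma~\ref{lemV1}, $\psi(n)=O(2^{-n/2})$.

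First I would dispose of base-point independence exactly as in Lemma~\ref{lemV3}: the decay $\psi(n)=O(2^{-n/2})$ means that changing $x_0$ perturbs each ball $B_N(x_0)$ only in an $O(1)$-collar, and the division by $N$ in \eqref{5.28} kills the difference. Fixing $x_0=x_1$ then leaves $g(x_2)=\frac{1}{9(6-\lambda)^2}\,\lim_{N}\frac1N\sum_{d(x,x_1)\le N}\psi(d(x,x_1))\,\psi(d(x,x_2))$, with $k=d(x_1,x_2)$.

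The technical heart, and the main obstacle, is the combinatorial partition of the sphere $\{x:d(x,x_1)=n\}$ of $\Go$ by the value $d(x,x_2)$. This is genuinely different from the $3$-regular tree of Lemma~\ref{lemV4}: $\Go$ is the line graph of $\G$, i.e. a $4$-regular ``tree of triangles'' with $2^{n+1}$ points on the $n$-sphere, and it is \emph{not} bipartite, so the values $d(x,x_2)$ fill in both parities $n+k,\,n+k-1,\dots,n-k$ rather than one. Using the incidence dictionary (triangles $\leftrightarrow$ vertices of $\G$, shared vertices $\leftrightarrow$ edges of $\G$, and $d_{\Go}(x,y)=1+\min$ endpoint $\G$-distance, as in Lemma~\ref{lem5.8}), I would record the multiplicity of each value $d(x,x_2)=n+k-m$ as an explicit power of $2$ summing to $2^{n+1}$. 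Substituting and grouping converts the double sum into a finite combination of the one-dimensional sums $\frac1N\sum_n 2^{n+m/2}\psi(n)\psi(n+m)$, each of which converges by Lemma~\ref{lemV7} to $\frac{(6-\lambda)^2}{36}b(\lambda)\cos(mt\log2)$. This simultaneously establishes existence of the limit and reduces the value to a finite trigonometric sum; the Chebyshev-type identity $\sin a\sum_{j}\cos((k{-}2j)a)=\sin(ka)\cos a$ from Lemma~\ref{lemV4}, together with the explicit form of $\psi$, then collapses it to a multiple of $\psi(k)$.

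Once existence is secured, I would actually prefer to get the constant without the collapse, by the argument of Lemma~\ref{lem5.3}. Interchanging the local $5$-term operator $-\D_\Go$ with the \emph{linear} Cesàro mean and using \eqref{4.10} with the symmetry of the kernel $\tilde{P_\lambda}$ shows that $x_2\mapsto g(x_2)$ is a $\lambda$-eigenfunction of $-\D_\Go$; base-point independence plus the transitivity of $\mathrm{Aut}(\G)$ on each $\Go$-sphere about $x_1$ shows it is radial about $x_1$. By uniqueness of radial $\lambda$-eigenfunctions (as in Theorem~\ref{thm5.3}), $g$ is then a scalar multiple of $\tilde{P_\lambda}\delta_{x_1}(x_2)=\frac{1}{3(6-\lambda)}\psi(k)$; setting $k=0$ and using $g(x_1)=\frac{b(\lambda)}{162}$ from \eqref{5.37} together with $\psi(0)=2+2\varphi(1)=\frac{2(6-\lambda)}{3}$ (since $\varphi(1)=1-\lambda/3$ by the eigenvalue equation of Theorem~\ref{thm5.3}) forces the scalar to be $\frac{b(\lambda)}{36}$, giving the asserted identity. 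The one point needing care here is that this interchange presupposes existence of $g$ at the four neighbors of $x_2$, so the sphere-partition estimate of the preceding paragraph (via Lemma~\ref{lemV7}) must be invoked first to guarantee existence; the sphere partition in the non-bipartite tree-of-triangles $\Go$ is thus the real obstacle, while the value then follows either by direct summation or by this soft eigenfunction argument.
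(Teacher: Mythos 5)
Your outline reproduces the paper's own proof in its overall architecture: the paper also fixes the base point $x_0=x_1$ (by the argument of Lemma~\ref{lemV3}), partitions the sphere $\{x:d(x,x_1)=n\}$ in $\Go$ according to the value of $d(x,x_2)$, evaluates each resulting Ces\`aro sum by Lemma~\ref{lemV7}, and finishes with a trigonometric collapse to $\psi(k)$ using the expression for $\varphi(k)$ from the proof of Lemma~\ref{lemV4}. The genuine gap is the step you yourself call the ``real obstacle'' and then defer: the explicit sphere partition. The paper's count (for $n>k$) is: $2^n$ points with $d(x,x_2)=n+k$; $2^{n-j}$ points with $d(x,x_2)=n+k-2j+1$ for $1\le j\le k$; and $2^{n-k}$ points with $d(x,x_2)=n-k$. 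Note that this is \emph{not} the shape you anticipated (``$n+k,\,n+k-1,\dots,n-k$'', filling in all intermediate values): only the two extreme values $n\pm k$ carry the parity of $n+k$, every intermediate value that occurs has odd offset $1,3,\dots,2k-1$ from $n+k$, and for $k\ge2$ the even-offset values such as $n+k-2$ do not occur at all. Since both the existence of the limit and its value rest entirely on these multiplicities, the proposal as written is a correct strategy but not a proof, and the discrepancy between your guessed partition and the true one shows this computation cannot simply be waved through.

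Your alternative ending for identifying the constant is, however, sound and genuinely different from the paper's (the paper carries out the full trigonometric identity instead). Granting existence of the limits at every point (which again requires the partition), the $5$-term operator $-\D_\Go$ in the variable $x_2$ passes through the Ces\`aro limit, so $g(x_2)=\langle\tilde{P_\lambda}\delta_{x_1},\tilde{P_\lambda}\delta_{x_2}\rangle_M$ is a $\lambda$-eigenfunction; invariance under the stabilizer of $x_1$ makes it radial; and radial $\lambda$-eigenfunctions on $\Go$ form a one-dimensional space. This last point needs the $\Go$-analogue of Theorem~\ref{thm5.3}, not that theorem itself: a point of $\Go$ at distance $n\ge1$ from $x_1$ has one neighbor at distance $n-1$, one at distance $n$ and two at distance $n+1$, giving the recursion $2h(n+1)=(3-\lambda)h(n)-h(n-1)$ with initial condition $4h(0)-4h(1)=\lambda h(0)$, so $h$ is determined by $h(0)$. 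Your normalization at $k=0$ is also correct: $\psi(0)=2+2\varphi(1)=\tfrac{2}{3}(6-\lambda)$, so the asserted formula gives $b(\lambda)/162$ there, matching Lemma~\ref{lemV8}. This route buys you a way to skip the paper's final (and messiest) trigonometric step, but it cannot substitute for the combinatorial heart of the lemma, which remains missing from your write-up.
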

\begin{proof}
As before we can take the base point $x_0=x_1$.  For $n>k$ we can sort the $2^{n+1}$ points $x$ with $d(x,x_1)=n$ as follows:

$2^n$ points with $d(x,x_2)=n+k$,

$2^{n-j}$ points with $d(x,x_2)=n+k-2j+1$ for $1\leq j \leq k$, and

$2^{n-k}$ points with $d(x,x_2)=n-k$.

\begin{figure}[htb]
\begin{center}
\begin{picture}(150,150)(-92,-70)\thicklines
\setlength{\unitlength}{40pt}
\put(0,0){\line(-1,0){4}}
\put(0,0){\line(-1,1){1}}
\put(-2,0){\line(1,1){1}}
\put(-2,0){\line(-1,-1){1}}
\put(-4,0){\line(1,-1){1}}
\multiput(.2,0)(.3,0){3}{$.$}
\put(-6.2,-.2){$d(x,x_2)=n+k$}
\put(-2.8,1.3){$d(x,x_2)=n+k-3$}
\put(-4.3,-1.4){$d(x,x_2)=n+k-1$}
\put(-4.1,.2){$x_1$}
\qbezier(-2,1)(-1,.5)(0,1)
\qbezier(-4,-1)(-3,-.5)(-2,-1)
\qbezier(-4.3,1)(-3.1,0)(-4.3,-1)
\multiput(-4,0)(2,0){3}{\circle*{.1}}
\put(.7,-1.4){$d(x,x_2)=n-k+1$}
\put(3.3,-.2){$d(x,x_2)=n-k$}
\put(3.2,.2){$x_2$}
\put(1.2,0){\line(1,0){2}}
\multiput(1.2,0)(2,0){2}{\circle*{.1}}
\put(3.2,0){\line(-1,-1){1}}
\put(1.2,0){\line(1,-1){1}}
\qbezier(1.2,-1)(2.2,-.5)(3.2,-1)
\qbezier(3.4,1)(2.5,0)(3.4,-1)
\end{picture}
\end{center}
\caption{Partition of points $x$ with $d(x,x_1)=n$.}
\end{figure}

Thus we have
\begin{eqnarray}
<\tilde{P_\lambda}\delta_{x_1},\tilde{P_\lambda}\delta_{x_2}>_M=\frac{1}{(6-\lambda)^2}\cdot\frac{1}{9} \displaystyle\lim_{N\to\infty} \frac{1}{N} \Sum_{n=1}^N & \notag \\
&\hspace{-6cm}\psi(n) \left(2^n \psi(n+k)+\Sum_{j=1}^k 2^{n-j}\psi(n+k-2j+1)+2^{n-k}\psi(n-k)\right) \notag \\
&\hspace{-6cm}=\frac{b(\lambda)}{9\cdot36}2^{-k/2}\left[\cos(kt\log2)+\frac{1}{\sqrt{2}}\Sum_{j=1}^k \cos(k-2j+1) t\log2 +\cos(kt\log2)\right] \notag
\end{eqnarray}
by \eqref{5.36}.

To complete the proof we need to show
\begin{eqnarray}
\frac{2^{-k/2}}{9}&\left[2\cos(kt\log2)+\frac{1}{\sqrt{2}}\Sum_{j=1}^k\cos(k-2j+1)t\log2 \right]\notag \\
&=\frac{1}{3(6-\lambda)}(2\varphi(k)+\varphi(k-1)+\varphi(k+1)). \notag
\end{eqnarray}
As we saw in the proof of Lemma \ref{lemV4}, $\varphi(k)=\frac{2}{3} 2^{-k/2}(2\cos(kt\log2)+\frac{1}{2}\Sum_{j=1}^{k-1}\cos(k-2j)t\log2)$ so
\begin{eqnarray}
2\varphi(k)&\hspace{-1cm}+\varphi(k-1)+\varphi(k+1)=\frac{2}{3} 2^{-k/2}\left(4\cos(kt\log{2}+\Sum_{j=1}^{k-1} \cos(k-2j)\log2\right. \notag \\
&+ 2 \sqrt{2} \cos(k-1)t\log2 +\frac{\sqrt{2}}{2} \Sum_{j=1}^{k-2}\cos(k-2j-1)t\log2+\sqrt{2}\cos(k+1)t\log2 \notag \\
&\left.+\frac{1}{2\sqrt{2}} \Sum_{j=1}^k \cos(k-2j+1)t\log2 \right) \notag
\end{eqnarray}
and the result follows by standard trigonometric identities. 
\end{proof}

\begin{theorem}\label{thmV8}
Suppose $F$ has finite support on $\Go$.  Then
\begin{equation}\label{5.39}
<\tilde{P_\lambda}F,F>=36b(\lambda)^{-1}<\tilde{P_\lambda}F,\tilde{P_\lambda}F>_M. 
\end{equation}
\end{theorem}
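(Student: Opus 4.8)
The plan is to mirror the proof of Theorem~\ref{thmV5}: reduce the quadratic identity \eqref{5.39} to a pairing identity on the basis $\{\delta_x\}$ and then extend by sesquilinearity. Both sides of \eqref{5.39} are sesquilinear forms in $F$ (the left side is the standard $\ell^2(\Go)$ pairing, which is a genuine finite sum since $F$ has finite support; the right side is sesquilinear because $\tilde{P_\lambda}$ is linear and $<\cdot,\cdot>_M$ is sesquilinear), so it suffices to verify
$$<\tilde{P_\lambda}\delta_{x_1},\delta_{x_2}> = 36\,b(\lambda)^{-1}<\tilde{P_\lambda}\delta_{x_1},\tilde{P_\lambda}\delta_{x_2}>_M$$
for every pair $x_1,x_2\in\Go$, with $k=d(x_1,x_2)$.

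First I would compute the left-hand pairing directly from the explicit kernel in Theorem~\ref{thm5.4}. Since $\tilde{P_\lambda}\delta_{x_1}(x)=\frac{1}{3(6-\lambda)}\psi(d(x,x_1))$, evaluating at $x_2$ gives
$$<\tilde{P_\lambda}\delta_{x_1},\delta_{x_2}> = \tilde{P_\lambda}\delta_{x_1}(x_2)=\frac{1}{3(6-\lambda)}\psi(k).$$
Next I would invoke Lemma~\ref{lemV9}, which already supplies the matching modified-mean pairing
$$<\tilde{P_\lambda}\delta_{x_1},\tilde{P_\lambda}\delta_{x_2}>_M=\frac{b(\lambda)}{36}\cdot\frac{1}{3(6-\lambda)}\psi(k).$$
Dividing the first display by the second eliminates the common factor $\frac{1}{3(6-\lambda)}\psi(k)$ and produces exactly $36\,b(\lambda)^{-1}$, which is the basis identity. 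Finally I would write $F=\sum_i c_i\delta_{x_i}$ as a finite sum and expand both sesquilinear forms, so that \eqref{5.39} follows term by term, precisely as \eqref{5.34} followed in Theorem~\ref{thmV5}.

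The substantive analytic content is already contained in Lemma~\ref{lemV9} (and Lemma~\ref{lemV8} for the diagonal case), so the only genuine obstacle is the bookkeeping in the final linear extension: one must confirm that the modified mean of a finite linear combination $\tilde{P_\lambda}F=\sum_i c_i\tilde{P_\lambda}\delta_{x_i}$ equals the corresponding finite double sum $\sum_{i,j}c_i\overline{c_j}<\tilde{P_\lambda}\delta_{x_i},\tilde{P_\lambda}\delta_{x_j}>_M$. This is exactly where the base-point independence established in Lemmas~\ref{lemV8}--\ref{lemV9} is essential, since it guarantees that the limits defining $<\cdot,\cdot>_M$ distribute over the finite sum without interference from the differing ball centers $x_i$. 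With this observation the proof is complete.
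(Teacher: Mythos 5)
Your proposal is correct and is essentially the paper's own proof: the paper likewise notes $<\tilde{P_\lambda}\delta_{x_1},\delta_{x_2}>=\frac{1}{3(6-\lambda)}\psi(d(x_1,x_2))$, rewrites the modified-mean pairing computed in Lemma~\ref{lemV9} as the basis identity with the factor $36b(\lambda)^{-1}$, and concludes by linearity. Your closing observation---that the finitely many pairwise modified-mean limits exist for a common base point, so the limit distributes over the finite double sum---is precisely the detail the paper leaves implicit in the phrase ``follows by linearity.''
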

\begin{proof}
Since $<\tilde{P_\lambda}\delta_{x_1},\delta_{x_2}> =\frac{1}{3(6-\lambda)}\psi(d(x_1,x_2))$ we can rewrite \eqref{5.37} as $<\tilde{P_\lambda}\delta_{x_1},\delta_{x_2}> =36b(\lambda)^{-1}<\tilde{P_\lambda}\delta_{x_1},\tilde{P_\lambda}\delta_{x_1}>_M$ and \eqref{5.39} follows by linearity.
\end{proof}

\begin{corollary} \label{corV9}
For $F\in\ell^2(\Go)$, for $\mu$-a.e. $\lambda$ in $\Sigma$, $<\tilde{P_\lambda}F,\tilde{P_\lambda}F>_M$ exists, and
\begin{equation*}
||F||^2_{\ell^2(\Go)}=||\tilde{P_6}F||^2_2+\int_\Sigma <\tilde{P_\lambda}F,\tilde{P_\lambda}F>_M 36 b(\lambda)^{-1} d\mu(\lambda).
\end{equation*}
\end{corollary}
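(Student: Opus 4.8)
The plan is to follow the same two-step pattern used for Corollary~\ref{corV6} on $\G$: first establish the identity for finitely supported $F$, where every term reduces to quantities already computed above, and then pass to general $F\in\ell^2(\Go)$ by a density argument. The first step is essentially bookkeeping, and the only genuine work lies in the passage to the limit.

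For the finite-support case, recall that \eqref{4.17} already supplies the decomposition
\[
\|F\|^2_{\ell^2(\Go)}=\|\tilde{P_6}F\|^2_2+\int_\Sigma\langle\tilde{P_\lambda}F,F\rangle\,d\mu(\lambda),
\]
which is valid for all $F\in\ell^2(\Go)$ by the spectral theorem. Theorem~\ref{thmV8} then gives, for finitely supported $F$ and for $\mu$-a.e.\ $\lambda$ in the interior of $\Sigma$, the pointwise-in-$\lambda$ identity $\langle\tilde{P_\lambda}F,F\rangle=36\,b(\lambda)^{-1}\langle\tilde{P_\lambda}F,\tilde{P_\lambda}F\rangle_M$, where existence of the modified mean for finite-support $F$ follows from Lemmas~\ref{lemV8} and~\ref{lemV9} by bilinearity. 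Substituting this into the integrand of \eqref{4.17} yields the stated formula for all finitely supported $F$; this step is purely formal once Theorem~\ref{thmV8} is available.

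For the extension to arbitrary $F\in\ell^2(\Go)$, I would choose finitely supported $F_n\to F$ in $\ell^2(\Go)$. Continuity of the orthogonal projection $\tilde{P_6}$ gives $\|\tilde{P_6}F_n\|_2\to\|\tilde{P_6}F\|_2$ and $\|F_n\|_2\to\|F\|_2$, so by the finite-support case the integrals $\int_\Sigma 36\,b(\lambda)^{-1}\langle\tilde{P_\lambda}F_n,\tilde{P_\lambda}F_n\rangle_M\,d\mu(\lambda)$ converge to $\|F\|^2_{\ell^2(\Go)}-\|\tilde{P_6}F\|^2_2$, which by \eqref{4.17} applied to $F$ equals $\int_\Sigma\langle\tilde{P_\lambda}F,F\rangle\,d\mu(\lambda)$. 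Here it is helpful that the weight $36\,b(\lambda)^{-1}$ is bounded on $\Sigma$: by \eqref{5.30} it vanishes at the two endpoints, where $-\lambda^2+6\lambda-1=0$, and is positive and finite in the interior, so no singular behavior at $\partial\Sigma$ enters the estimates.

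The main obstacle is exactly the point glossed over as ``routine limiting arguments'' for Corollary~\ref{corV6}: one must show that $\langle\tilde{P_\lambda}F,\tilde{P_\lambda}F\rangle_M$ exists for $\mu$-a.e.\ $\lambda$ for \emph{general} $F$, not merely for finitely supported $F$, and that it equals $\tfrac{1}{36}b(\lambda)\langle\tilde{P_\lambda}F,F\rangle$. The delicacy is that the modified mean in \eqref{5.28} is itself a Ces\`aro-type limit in $N$ rather than a norm on a fixed Hilbert space, so one is interchanging the $N\to\infty$ averaging limit with the $n\to\infty$ limit in $\ell^2(\Go)$. I would resolve this by using the finite-support identity $\langle\tilde{P_\lambda}F,F\rangle=\tfrac{1}{36}b(\lambda)\langle\tilde{P_\lambda}F,\tilde{P_\lambda}F\rangle_M$ on the dense finite-support subspace to \emph{define} the correct value of $\langle\tilde{P_\lambda}F,\tilde{P_\lambda}F\rangle_M$ for general $F$ as an $L^2(\mu)$-limit of the corresponding quantities for $F_n$ (the convergence being controlled by \eqref{4.17} and the boundedness of $36\,b(\lambda)^{-1}$), and then extract $\mu$-a.e.\ existence along a subsequence, paralleling the differentiation characterization \eqref{4.18} of $\langle\tilde{P_\lambda}F,F\rangle$ as an $L^2$ limit of normalized spectral averages.
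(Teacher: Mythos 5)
Your proposal is correct and follows essentially the same route as the paper: the paper's proof of Corollary~\ref{corV9} is simply ``Same as for Corollary~\ref{corV6},'' which in turn combines the finite-support identity (Theorem~\ref{thmV8} together with \eqref{4.17}) with ``routine limiting arguments,'' exactly your two steps. Your elaboration of the density/limiting step (using boundedness of $36\,b(\lambda)^{-1}$ on $\Sigma$ and extracting $\mu$-a.e.\ existence along a subsequence) is a legitimate filling-in of what the paper leaves implicit, not a different approach.
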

\begin{proof}
Same as for Corollary \ref{corV6}. \end{proof}

We end this section with a description of 5-series eigenfunctions on the graph $\G_1$ (note there are no 5-eigenfunctions on the graph $\G_0$). One can easily see that on  $\G_1$ there are no finitely supported 5-eigenfunction, there are no radially symmetric 5-eigenfunctions, and that 5-eigenfunctions do not correspond to cycles. 
\begin{figure}[htb]
\def\trigup{\put(0, 0){\line(1, 0){6}}\put(0, 0){\line(3, 5){3}}\put(6, 0){\line(-3, 5){3}}}
\def\trigdown{\put(0, 0){\line(1, 0){6}}\put(0, 0){\line(3, -5){3}}\put(6, 0){\line(-3, -5){3}}}
\def\trigright{\put(0, 0){\line(5, 3){5}}\put(0, 0){\line(5, -3){5}}\put(5, 3){\line(0, -1){6}}}
\def\trigleft{\put(0, 0){\line(-5, 3){5}}\put(0, 0){\line(-5, -3){5}}\put(-5, 3){\line(0, -1){6}}}
\def\tu#1#2#3{\setlength{\unitlength}{2.5pt}\thinlines\put(-3,-10){\trigup}\put(0,-5){\trigup}\put(3,-10){\trigup}\put(0,-5){\llap{\SMALL{#1}}}\put(6,-5){\rlap{\SMALL{#2}}}\put(2,-13){\rlap{\SMALL{#3}}}}
\def\td#1#2#3{\setlength{\unitlength}{2.5pt}\thinlines\put(-3,10){\trigdown}\put(0,5){\trigdown}\put(3,10){\trigdown}\put(0,3){\llap{\SMALL{#1}}}\put(6,3){\rlap{\SMALL{#2}}}\put(2,11){\rlap{\SMALL{#3}}}}
\def\tr#1#2#3{\setlength{\unitlength}{2.5pt}\thinlines\put(0,0){\trigright}\put(5,3){\trigright}\put(5,-3){\trigright}\put(5,3){\llap{\SMALL{#1}}}\put(5,-6){\llap{\SMALL{#2}}}\put(11,0){\rlap{\SMALL{#3}}}}
\def\tl#1#2#3{\setlength{\unitlength}{2.5pt}\thinlines\put(0,0){\trigleft}\put(-5,3){\trigleft}\put(-5,-3){\trigleft}\put(-5,3){\rlap{\SMALL{#1}}}\put(-5,-6){\rlap{\SMALL{#2}}}\put(-11,0){\llap{\SMALL{#3}}}}
\centering
\begin{picture}(120, 165)(-60, -80)
\setlength{\unitlength}{2.5pt}
\put(0,0){\tu{4}{4}{-8}\td{-4}{-4}{8}}
\put(12,20){\tu{-2}{4}{-2}\td{-1}{-1}{2}}
\put(-12,20){\tu{4}{-2}{-2}\td{-1}{-1}{2}}
\put(12,-20){\tu{1}{1}{-2}\td{2}{-4}{2}}
\put(-12,-20){\tu{1}{1}{-2}\td{-4}{2}{2}}
\put(-15,10){\tl{-1}{-1}{2}}
\put(-15,-10){\tl{1}{1}{-2}}
\put(21,-10){\tr{1}{1}{-2}}
\put(21,10){\tr{-1}{-1}{2}}
\end{picture}\caption{A part of $\G_1$ with a 5-eigenfunction (values not shown are equal to zero).}\label{fig-5}
\end{figure}
By by an argument similar to Theorem~\ref{thm5.1} one can show that eigenfunctions in Figure~\ref{fig-5} (with their translations, rotations and reflections), are complete in the eigenspace $E_5$  on  $\G_1$. We do not give an explicit formula for the  5-eigenprojections on $\G_n$. One can see that for each $n>1$ there are eigenfunctions on $\G_n$ that resemble those in  Figure~\ref{fig-5}, 
 and also
finitely supported 5-eigenfunctions (see Remark~\ref{rem-K}).  

\section{Periodic Fractafolds}\label{sec6}

\begin{remark}\label{rem-K} Note  that  on a periodic graph, linear combinations of compactly
supported eigenfunctions are dense in an eigenspace  (see  \cite[Theorem~8]{Kuchment05}, \cite{Kuchment93} and \cite[Lemma~3.5]{KuchmentPost}). 

The computation of compactly 
supported 5- and 6- series eigenfunctions is discussed in detail in \cite{St03,T98}, and the eigenfunctions with compact support are complete in the corresponding eigenspaces. 

In particular, \cite{St03,T98} show that any   6-series finitely supported eigenfunction on $\G_{n+1}$ is the  continuation of any finitely supported function on $\G_n$, and the corresponding continuous eigenfunction on the \Sif\ $\fF$ can be computed using the eigenfunction extension map on fractafolds (see Subsection~\ref{subsec-eem}). 
Similarly, any   5-series finitely supported eigenfunction on $\G_{n+1}$ can be described by a cycle of triangles (homology) in  $\G_n$, and the corresponding continuous eigenfunction on the \Sif\ $\fF$ is computed using the eigenfunction extension map on fractafolds. 
 \end{remark}

\begin{example}{The Ladder Fractafold.}  {\normalfont Here $\G$ is the ladder graph consisting of two copies of $\mathbb{Z}$, $\{a_k\}$ and $\{b_k\}$ with $a_k \sim b_k$
\begin{figure}[htb]
\centering\def\q{\includegraphics[height=52pt,width=30pt]{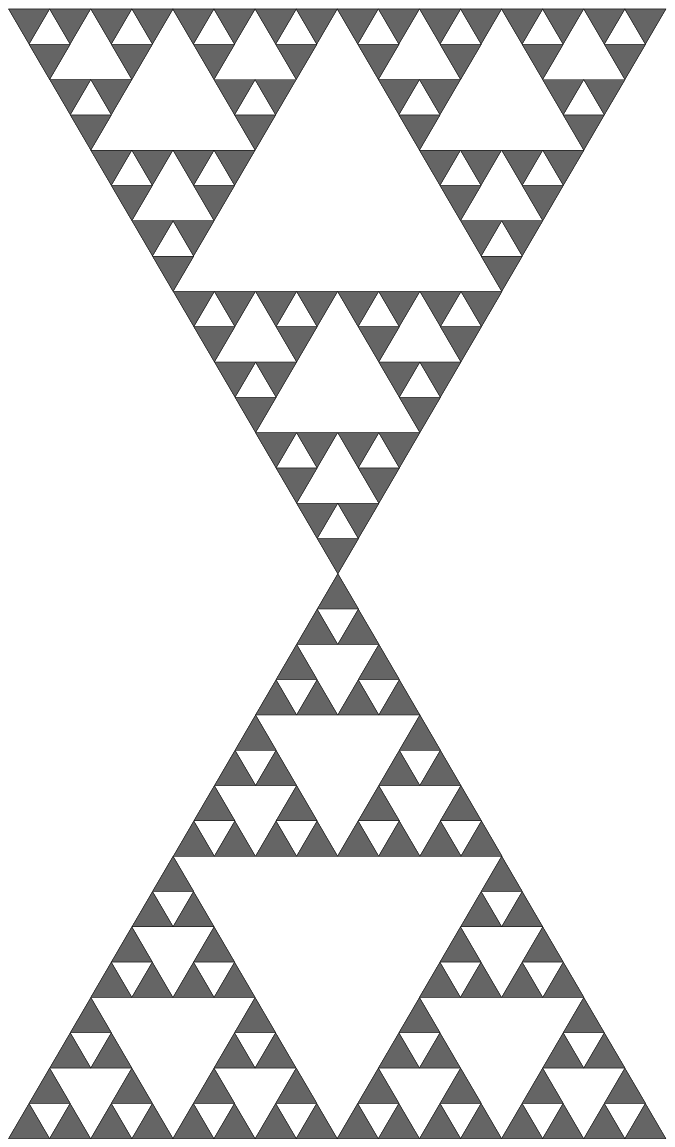}}
\begin{picture}(240, 60)(0, 0)
\put(0,0){\multiput(0, 0)(29.55,0){8}{\q}}
\end{picture}\caption{A part of the infinite Ladder  \Sif.}\label{fig-h-ladder}
\end{figure}
\begin{figure}[hbt]
\begin{center}
\begin{picture}(120,45)(0,25)\thicklines
\setlength{\unitlength}{12pt}
\put(0,3){\line(1,0){12}}
\put(0,6){\line(1,0){12}}
\multiput(3,3)(3,0){3}{\line(0,1){3}}
\put(3,2){$b_{-1}$}
\put(6,2){$b_{0}$}
\put(9,2){$b_{1}$}
\put(3,6.5){$a_{-1}$}
\put(6,6.5){$a_{0}$}
\put(9,6.5){$a_{1}$}
\put(12,4.5){$\dots$}
\put(-1,4.5){$\dots$}
\end{picture}
\end{center}
\caption{$\G$ graph for the Ladder Fractafold}\label{fig1}\end{figure}
and $\Go$ consisting of three copies of $\mathbb{Z}$, $\{x_{k+\frac{1}{2}}\}, \{w_k\}, \{y_{k+\frac{1}{2}}\}$ with $w_k$ joined to $x_{k-\frac{1}{2}},x_{k+\frac{1}{2}},y_{k-\frac{1}{2}}$, and $y_{k+\frac{1}{2}}$, 
\begin{figure}[hbt]
\begin{center}
\begin{picture}(120,90)(5,40)\thicklines
\setlength{\unitlength}{15pt}
\put(0,3){\line(1,0){9}}
\put(0,7.5){\line(1,0){9}}
\multiput(0,3)(3,0){3}{\line(2,3){3}}
\multiput(3,3)(3,0){3}{\line(-2,3){3}}
\put(0,2.3){$y_{-\frac32}$}
\put(3,2.3){$y_{-\frac12}$}
\put(6,2.3){$y_{\frac12}$}
\put(9,2.3){$y_{\frac32}$}
\put(0,8.03){$x_{-\frac32}$}
\put(3,8.03){$x_{-\frac12}$}
\put(6,8.03){$x_{\frac12}$}
\put(9,8.03){$x_{\frac32}$}
\put(1.7,5){$w_{-1}$}
\put(4.7,5){$w_{0}$}
\put(7.7,5){$w_{1}$}
\put(9,5.5){$\dots$}
\put(-1,5.5){$\dots$}
\end{picture}
\end{center}
\caption{$\Go$ graph for the Ladder Fractafold}\label{fig2}\end{figure}
where $x_{k+\frac{1}{2}}$ is the edge $[a_k, a_{k+1}]$, $x_{y+\frac{1}{2}}$ is the edge $[b_k,b_{k+1}]$ and $w_k$ is the edge $[a_k,b_k]$.

It is easy to see that the spectrum of $-\D_\G$ is $[0,6]$, with the even functions $\varphi_\theta(a_k)=\varphi_\theta(b_k)=\cos k\theta$ or $\sin k\theta$, $0\leq\theta\leq\pi$ corresponding to $\lambda=2-2\cos\theta$ in $[0,4]$ and the odd functions $\psi_\theta(a_k)=-\psi_\theta(b_k)=\cos k\theta$ or $\sin k\theta$, $0\leq\theta\leq\pi$ corresponding to $\lambda=4-2\cos\theta$ in $[2,6]$.

These transfer to eigenfunctions of $-\D_\Go$
\begin{eqnarray*}
&\tilde{\varphi_\theta}(x_{k+\frac{1}{2}})=\tilde{\varphi_\theta}(y_{k+\frac{1}{2}})=\cos(k+\frac{1}{2})\theta\cos\frac{1}{2}\theta \mbox{ or }  \sin(k+\frac{1}{2})\theta\cos\frac{1}{2}\theta\\
&\tilde{\varphi_\theta}(w_k)=\cos k\theta \mbox{ or } \sin k\theta \mbox{ and}\\
&\tilde{\psi_\theta}(x_{k+\frac{1}{2}})=-\tilde{\psi_\theta}(y_{k+\frac{1}{2}})=\cos(k+\frac{1}{2})\theta \mbox{ or } \sin(k+\frac{1}{2})\theta\\
&\tilde{\psi_\theta}(w_k)=0
\end{eqnarray*}
with the same eigenvalues.  It is also easy to see that there are no $\ell^2(\Go)$ eigenfunctions corresponding to $\lambda=6$ (or for any $\lambda$ value whatsoever).  Thus $-\D_\Go$ has absolutely continuous spectrum $[0,6]$ with multiplicity $2$ in $[0,2]$ and $[4,6]$ and multiplicity $4$ in $[2,4]$.
}\end{example}

\begin{example}{The Honeycomb Fractafold.} {\normalfont Here $\G$ is the hexagonal graph consisting of the triangular lattice $\mathcal{L}$ generated by $(1,0)$ and $(\frac{1}{2},\frac{\sqrt{3}}{2})$ and the displaced lattice $\mathcal{L}+(\frac{1}{2},\frac{\sqrt{3}}{6})$.  We denote by $a(j,k)$ the points $j(1,0)+k(\frac{1}{2},\frac{\sqrt{3}}{2})$ of $\mathcal{L}$ and by $ b(j,k)$ 
the points $a(j,k)+(\frac{1}{2},\frac{\sqrt{3}}{6})$ of the displaced lattice, with edges $a(j,k)\sim b(j,k)$, $a(j,k)\sim b(j-1,k)$ and $a(j,k)\sim b(j,k-1)$.
\begin{figure}[htb]
\centering\def\q{\includegraphics[height=52pt,width=30pt]{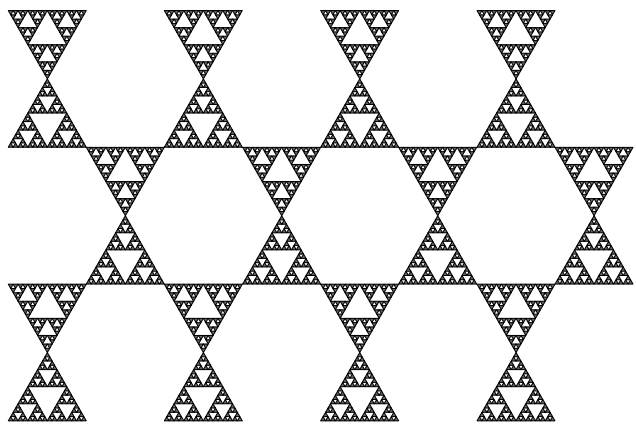}}
\begin{picture}(240, 160)(0, 0)
\put(29.55,51.8){\multiput(0, 0)(59.1,0){4}{\q}}
\multiput(0, 0)(0,103.6){2}{\multiput(0, 0)(59.1,0){4}{\q}}
\end{picture}\caption{A part of the infinite periodic \Sif\ based on the hexagonal (honeycomb) lattice.}\label{fig-h-lattice}
\end{figure}
\begin{figure}[hbt]
\begin{center}
\begin{picture}(200,120)(-100,-65)\thicklines
\setlength{\unitlength}{9pt}
\put(-8,4){\line(2,1){4}}
\put(-4,-2){\line(2,1){4}}
\put(0,4){\line(2,1){4}}
\put(4,-2){\line(2,1){4}}
\put(0,-8){\line(2,1){4}}
\put(0,4){\line(-2,1){4}}
\put(8,4){\line(-2,1){4}}
\put(4,-2){\line(-2,1){4}}
\put(-4,-2){\line(-2,1){4}}
\put(0,-8){\line(-2,1){4}}
\put(0,0){\line(0,1){4}}
\put(-8,0){\line(0,1){4}}
\put(8,0){\line(0,1){4}}
\put(4,-6){\line(0,1){4}}
\put(-4,-6){\line(0,1){4}}
\put(-1,5){$a(0,1)$}
\put(8.5,0){$b(1,0)$}
\put(4.5,-3){$a(1,0)$}
\put(4.5,-7){$b(1,-1)$}
\put(-8.5,-7){$b(0,-1)$}
\put(-5.5,-.9){$a(0,0)$}
\put(-13,0){$b(-1,0)$}
\put(.5,0){$b(0,0)$}
\end{picture}
\end{center}
\caption{A part of the Hexagonal graph}\label{fig3}\end{figure}
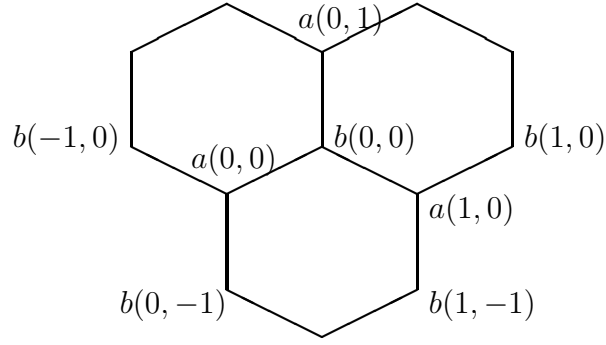
The eigenfunctions of $-\D_\G$ will have the form 
\begin{eqnarray*}
& \varphi_{u,v}(a(j,k))=e^{2\pi i (ju+kv)}\\
& \varphi_{u,v}(b(j,k))=\gamma e^{2\pi i (ju+kv)}
\end{eqnarray*}
where $(u,v)\in[0,1]\times[0,1]$ and $\gamma$ depends on $u,v$.  Let $1 + e^{2\pi i u}+e^{2\pi i v}=re^{i \theta}$ in polar coordinates (so $r$ and $\theta$ are functions of $u,v$).  Note that $0\leq r\leq3$.  Then the eigenvalue equation requires $\gamma^2=e^{2 i \theta}$ or $\gamma=\pm e^{i\theta}$ with corresponding eigenvalues $\lambda=3\mp r$ (so the choice $\pm$ yields the intervals $[0,3]$ and $[3,6]$ in $spect(-\D_\G)$).

We can write the explicit spectral resolution as follows.  For $f\in\ell^2(\G)$ define $$\hat{f_a}(u,v)=\displaystyle\sum_j\displaystyle\sum_k e^{-2\pi i(ju+kv)}f(a(j,u))$$ and $$\hat{f_b}(u,v)=\displaystyle\sum_j\displaystyle\sum_k e^{-2\pi i(ju+kv)}f(b(j,u)).$$

We can invert these so that
\begin{eqnarray*}
& 
\displaystyle{{f(a(j,k)) \brace f(b(j,k))}} =&\int_0^1\int_0^1 {1 \brace e^{i \theta}} e^{2\pi i(ju+kv)}\frac{1}{2}(\hat{f_a}(u,v)+e^{-i \theta} \hat{f_b}(u,v))du dv\\
&&+\int_0^1\int_0^1 {1 \brace -e^{i \theta}} 
e^{2\pi i(ju+kv)}
\frac{1}{2}(\hat{f_a}(u,v)-e^{-i \theta} \hat{f_b}(u,v))du dv.
\end{eqnarray*}
Define $\lambda_\pm(u,v)$ by $$\lambda_\pm(u,v)=3\mp \sqrt{3+2 \cos 2\pi u + 2\cos2\pi v + 2\cos 2 \pi(u-v)}.$$
For $0\leq\lambda\leq3$ we define $u_\theta$ and $v_\theta$ by solving 
$\lambda_+(u,v)=\lambda$, and similarly for $3\leq\lambda\leq6$ we solve $\lambda_-(u,v)=\lambda$.  We then define
\begin{equation}
{P_\lambda f(a(j,k)) \brace P_\lambda f(b(j,k))} = \int_0^{2\pi} {1 \brace \pm e^{i \theta}}e^{2\pi i (j u_\theta +k v_\theta)}\frac{1}{2}(\hat{f_a}(u_\theta,v_\theta) \pm e^{-i \theta} \hat{f_b}(u_\theta,v_\theta))\left|\frac{\partial(u_\theta,v_\theta)}{\partial(\lambda,\theta)}\right|d\theta
\end{equation}
to obtain $f=\int_0^6 P_\lambda f d\lambda$ with $-\D_\G P_\lambda f= \lambda P_\lambda f$.  This solves problem $(a)$.

To solve problem $(b)$ we identify the space $E_6$ in $\ell^2(\Go)$.

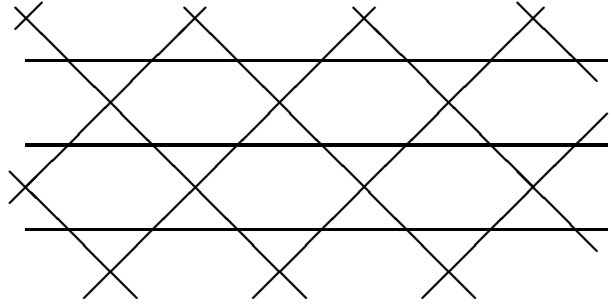
\begin{figure}[hbt]
\begin{center}
\begin{picture}(120,110)(-30,-40)\thicklines
\setlength{\unitlength}{4pt}
\put(-24,0){\line(1,1){17}}
\put(-16,-8){\line(1,1){25}}
\put(0,-8){\line(1,1){25}}
\put(16,-8){\line(1,1){15}}
\put(0,-8){\line(-1,1){25}}
\put(16,-8){\line(-1,1){25}}
\put(24,0){\line(-1,1){17}}
\put(-24,4){\line(1,0){55}}
\put(-24,-4){\line(1,0){55}}
\put(-24,12){\line(1,0){55}}
\put(30,10){\line(-1,1){7.5}}
\put(30,-6){\line(-1,1){7.5}}
\put(-16,-8){\line(-1,1){9.5}}
\put(-23,1){\line(-1,-1){2.5}}
\put(-25,15){\line(1,1){2.5}}
\multiput(-16,-8)(16,0){3}{\put(0,0){\line(-1,-1){2.5}}\put(0,0){\line(1,-1){2.5}}}
\end{picture}
\end{center}
\caption{A part of the graph $\Go$ for the Honeycomb Fractafold}\label{fig4}\end{figure}

We may regard $\Go$ as an infinite union of hexagons, each vertex belonging to exactly two hexagons.  For any fixed hexagon $H$, define $\psi_H$ to alternate values $\pm 1$ around the verticies of $H$, and to vanish elsewhere.  It is easy to see that $\psi_H$ in is $E_6$.  If $\{H_j\}$ is an enumeration of all the hexagons in $\Go$ then $\sum c_j \psi_{H_j}$ (finite sum) is in $E_6$.
}\end{example}  

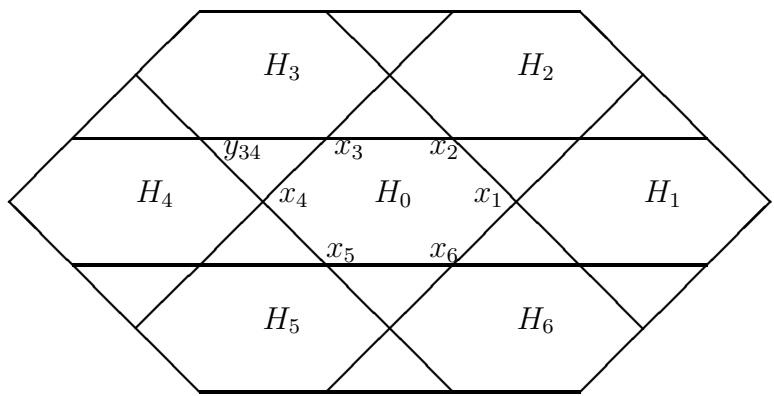
\begin{figure}[hbt]
\begin{center}
\begin{picture}(150,150)(-70,-70)\thicklines
\setlength{\unitlength}{6pt}
\put(-24,0){\line(1,1){12}}
\put(-16,-8){\line(1,1){20}}
\put(-4,-12){\line(1,1){20}}
\put(12,-12){\line(1,1){12}}
\put(-12,-12){\line(-1,1){12}}
\put(4,-12){\line(-1,1){20}}
\put(16,-8){\line(-1,1){20}}
\put(24,0){\line(-1,1){12}}
\put(-20,4){\line(1,0){40}}
\put(-20,-4){\line(1,0){40}}
\put(-12,12){\line(1,0){24}}
\put(-12,-12){\line(1,0){24}}
\put(-1,0){$H_0$}
\put(-8,8){$H_3$}
\put(8,8){$H_2$}
\put(16,0){$H_1$}
\put(-16,0){$H_4$}
\put(-8,-8){$H_5$}
\put(8,-8){$H_6$}
\put(-3.5,3){$x_3$}
\put(2.5,3){$x_2$}
\put(5.3,0){$x_1$}
\put(-7,0){$x_4$}
\put(-4,-3.5){$x_5$}
\put(2.5,-3.5){$x_6$}
\put(-10.5,3){$y_{34}$}
\end{picture}
\end{center}
\caption{Labels of hexagons and points}\label{fig5}\end{figure}

\begin{lemma}
Suppose $u\in E_6$ has compact support.  Then $u=\sum c_j \psi_{H_j}$ (finite sum). \end{lemma}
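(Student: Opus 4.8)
The plan is to convert the $6$-eigenfunction condition into a purely combinatorial (homological) one and then invoke planarity. First I would observe that for a compactly supported function the condition $u\in E_6$ is equivalent to $S_2u=0$. Indeed $-\D_\Go=6I-S_1S_2$, so the eigenvalue equation $-\D_\Go u=6u$ is the same as $S_1S_2u=0$; since $u$ has compact support so does $S_2u\in\ell^2(\G)$, and $S_1$ is one-to-one by Lemma~\ref{lem-4-1}, whence $S_2u=0$. Written out, this says $S_2u(a)=\sum_{e\ni a}u(e)=0$ for every vertex $a$ of $\G$; that is, the values of $u$ on the three vertices of each triangle of $\Go$ (the three edges of $\G$ meeting at a common vertex $a$) sum to zero. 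A direct check shows each generator $\psi_H$ satisfies this: at a vertex $a$ lying on the hexagonal face $H$ exactly two of its three edges lie on $H$ and, being consecutive along $H$, carry opposite values $\pm1$, while at a vertex off $H$ all three values vanish.

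Next I would reinterpret this condition homologically. Because $\G$ is bipartite, with classes $\{a(j,k)\}$ and $\{b(j,k)\}$, orienting every edge from its $a$-endpoint to its $b$-endpoint converts the unsigned incidence sum $\sum_{e\ni a}u(e)$ into $\pm$ the oriented boundary $\partial u$ at $a$. Thus the condition $S_2u=0$ says precisely that $u$ is a finitely supported $1$-cycle of the planar graph $\G$, and $\psi_H$ is, up to sign, the boundary $\partial H$ of the hexagonal face $H$: traversing $H$ one alternates between edges oriented with and against the cycle, which reproduces the alternating pattern $\pm1$. So the lemma is equivalent to the assertion that the finitely supported $1$-cycles of $\G$ are spanned by the boundaries of its bounded (hexagonal) faces.

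The main step, and the place where the real work lies, is this planar fact. I would prove it by the standard ``filling'' argument: since $H_1(\bR^2)=0$, the cycle $u$ bounds a $2$-chain, i.e.\ there is a function $g$ on the faces of $\G$ with $g(F)-g(F')=\pm u(e)$ whenever an edge $e$ separates the faces $F$ and $F'$. Normalizing $g$ to vanish on the unbounded face determines it uniquely via a crossing (winding) count along any path from that face, and I must then verify that $g$ has finite support: because $u$ vanishes outside a compact set, $g$ is locally constant there and hence equals its boundary value $0$ on every face outside a large disk. One then reads off $u=\sum_H g(H)\,\psi_H$, the sum running over the bounded hexagonal faces, which is finite. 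The two delicate points to verify carefully are the well-definedness of $g$ (independence of the chosen crossing path, which is exactly where $\partial u=0$ is used) and the finiteness of its support; the remaining identity $u=\sum_H g(H)\psi_H$ is then immediate bookkeeping over the edges in $\operatorname{supp}(u)$.
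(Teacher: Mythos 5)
Your proof is correct, but it takes a genuinely different route from the paper's. The paper argues by an explicit peeling induction: it picks an extremal hexagon of $\operatorname{supp}(u)$ (in the top row and right-most down-right diagonal), uses the triangle condition $E_6=\ker S_2$ together with the vanishing of $u$ on the three already-empty neighboring hexagons to force $u$ to vanish at five of the six vertices of that hexagon, subtracts the appropriate multiple of $\psi_H$ to clear the sixth, and iterates, with a separate argument to kill the last remaining row. Your approach instead translates the problem into homology: the identification of compactly supported elements of $E_6$ with finitely supported $1$-cycles of the planar graph $\G$ (via the bipartite orientation) is correct -- at each $a$-vertex all three edges point outward and at each $b$-vertex inward, so $\partial u=\mp S_2u$ on the two classes -- and $\psi_H=\pm\partial H$ because the edge orientations alternate around each hexagon. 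The remaining content is the planar filling fact that finitely supported $1$-cycles are spanned by face boundaries, which you correctly reduce to path-independence of the dual crossing count (a discrete Green's theorem, using $\partial u=0$ around each dual face) and finiteness of the support of the filling $2$-chain. What each approach buys: the paper's argument is elementary, self-contained and algorithmic, producing the coefficients $c_j$ recursively; yours isolates the structural reason the lemma is true and would apply verbatim to any planar periodic graph of this type, at the cost of carrying out the filling argument carefully.

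One minor slip to fix: the honeycomb lattice tiles the plane, so $\G$ has \emph{no} unbounded face -- every face is a bounded hexagon. Instead of normalizing $g$ on the unbounded face, normalize by requiring $g$ to vanish on the faces lying outside a large disk containing $\operatorname{supp}(u)$: these faces form a single dual-connected region across whose internal edges $u$ vanishes, so your own argument shows $g$ is constant there, and setting that constant to zero both makes $g$ well defined and gives it finite support. With that adjustment the proof is complete.
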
\begin{proof}
Suppose $supp  (u) \subseteq \displaystyle\bigcup_{j\in A} H_j$  
We will show that there exists $j_0\in A$ and $c_{j_0}$ such that $supp (u-c_{j_0} \psi_{H_{j_0}})\subseteq \displaystyle\bigcup_{j\in A\setminus \{j_0\}}H_j$.  The proof is then completed by induction.

We choose $j$ so that $H_j$ lies in the top row and right-most down-right slanting diagonal of $\cup_{j\in A}H_j$.  In Figure \ref{fig5} above, $j'=0$ and $u$ vanishes on $H_1$, $H_2$, and $H_3$.  So $u(x_1)=0$, $u(x_2)=0$, $u(x_3)=0$.  But $u(x_3)+u(x_4)+u(y_{34})=0$ because $E_6=ker (S_2)$ and $u(y_{34})=0$ since $y_{34}\in H_3$.  
So $u(x_4)=0$.  A similar argument shows $u(x_6)=0$.  The only vertex left in $H_0$ is $x_5$.  By subtracting off $u(x_5) \psi_{H_5}$
we can make $u$ vanish on $H_0$.

We can systematically go across the top row in $supp(u)$ from right to left and remove each hexagon, only changing $u$ on the row below it.  
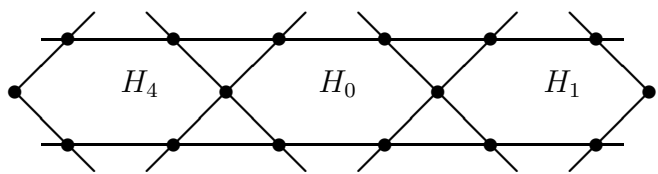
\begin{figure}[hbt]
\begin{center}
\begin{picture}(60,60)(-30,-25)\thicklines
\setlength{\unitlength}{5pt}
\put(-22,4){\line(1,0){44}}
\put(-22,-4){\line(1,0){44}}
\multiput(-24,0)(16,0){3}{\put(0,0){\line(1,1){6}}\put(0,0){\line(1,-1){6}}}
\multiput(24,0)(-16,0){3}{\put(0,0){\line(-1,1){6}}\put(0,0){\line(-1,-1){6}}}
\put(-1,0){$H_0$}
\put(16,0){$H_1$}
\put(-16,0){$H_4$}
\multiput(-20, -4)(8,0)6{\circle*{1}}
\multiput(-20, 4)(8,0)6{\circle*{1}}
\multiput(-24, 0)(16,0)4{\circle*{1}}
\end{picture}
\end{center}
\caption{A row of hexagons}\label{fig6}\end{figure}
Eventually $u$ will be supported on just one row, and $u(x)=0$ unless $x$ is one of the dotted points in Figure \ref{fig6}.

Let $H_0$ be the right most hexagon.  The $u \mid_{H_1}=0$ implies $u(x_1)=0$ and $u(x_6)=0$.  Considering the triangle below the row we get $u(x_5)=0$.  Considering the triangle above $x_4$ we get $u(x_4)=0$.  So $u\mid_{H_0}=0$.
\end{proof}

\begin{corollary}
A function of compact support is in $E_6$ if and only if $u(x_1)+u(x_2)+u(x_3)=0$ for every triangle $\{x_1,x_2,x_3\}$ in $\Go$.
\end{corollary}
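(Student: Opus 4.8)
The plan is to reduce the stated equivalence to the single identity $E_6=\ker S_2$, which is already implicit in the earlier lemmas, and then to observe that for the hexagonal graph $\G$ the triangles of the edge graph $\Go$ are exactly the stars of vertices of $\G$. First I would recall from Lemma~\ref{lem-4-1} that $-\D_\Go=6I-S_1S_2$ and that $S_1$ is injective. Hence, for any $u\in\ell^2(\Go)$, the relation $-\D_\Go u=6u$ holds if and only if $S_1S_2u=0$, and by injectivity of $S_1$ this is equivalent to $S_2u=0$; thus $E_6=\ker S_2$. Since a compactly supported function automatically belongs to $\ell^2(\Go)$, membership of such a $u$ in $E_6$ is equivalent to the pointwise condition $S_2u=0$.

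Next I would unravel $S_2u=0$ using the definition \eqref{e-4-2}: we have $S_2u(a)=u(x_1)+u(x_2)+u(x_3)$, where $x_1,x_2,x_3$ are the three edges of $\G$ meeting at the vertex $a$. In $\Go$ these three edges are pairwise adjacent, so $\{x_1,x_2,x_3\}$ is a triangle of $\Go$. Therefore the condition $S_2u=0$ says precisely that the vanishing-sum condition $u(x_1)+u(x_2)+u(x_3)=0$ holds on every triangle of $\Go$ that arises as such a vertex star.

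The key remaining step, and the only one using the specific geometry, is to show that these vertex-star triangles exhaust all triangles of $\Go$. Suppose $x_1,x_2,x_3$ are pairwise adjacent in $\Go$, i.e.\ any two of them share a vertex of $\G$. If they do not all share one common vertex, then writing $x_1=ab$ and $x_2=bc$ with $a\neq c$, the edge $x_3$ must meet $x_1$ inside $\{a,b\}$ and $x_2$ inside $\{b,c\}$ while avoiding $b$, which forces $x_3=ac$; but then $a,b,c$ span a $3$-cycle in $\G$. Since $\G$ is the hexagonal lattice, which is bipartite and so contains no odd cycle, this is impossible. Hence every triangle of $\Go$ is the star of a unique vertex of $\G$, so the vanishing-sum condition over \emph{all} triangles of $\Go$ is literally the same as $S_2u=0$.

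Combining the three steps gives the equivalence: for compactly supported $u$, $u\in E_6$ iff $S_2u=0$ iff $u(x_1)+u(x_2)+u(x_3)=0$ for every triangle of $\Go$. The main obstacle is the last geometric step: in a graph $\G$ containing $3$-cycles the edge graph $\Go$ would have additional ``facial'' triangles, on which $S_2$ imposes no constraint, and the clean characterization would break down; it is precisely the bipartiteness (girth $6$) of the honeycomb graph that removes these. Everything else is a direct reading of the definitions of $S_2$ and $E_6$.
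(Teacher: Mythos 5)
Your proof is correct, and it takes a genuinely different route from the paper's. The paper proves the forward direction by citing the preceding Lemma (every compactly supported element of $E_6$ is a finite sum $\sum_j c_j\psi_{H_j}$) and checking the vanishing-sum identity on each generator $\psi_H$; it proves the converse by summing the two triangle identities at each point $x\in\Go$, which reproduces the $6$-eigenvalue equation at $x$. You instead run both directions through the single identity $E_6=\ker S_2$, obtained from Lemma~\ref{lem-4-1} ($S_1S_2=6I+\D_\Go$ and $S_1$ one-to-one), combined with the combinatorial fact that every triangle of the line graph $\Go$ is the star of a vertex of $\G$, because the honeycomb graph is bipartite and hence triangle-free. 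Three remarks on the comparison. First, your use of injectivity of $S_1$ is safe: it requires only the absence of $\ell^2(\G)$ $6$-eigenfunctions, not bounded invertibility of $S_2S_1$, which in fact fails here since $6$ lies in the continuous spectrum of $-\D_\G$. Second, your argument is independent of the classification Lemma --- whose own proof already invokes $E_6=\ker(S_2)$ --- and it proves slightly more: the equivalence holds for every $u\in\ell^2(\Go)$, compact support being used only to guarantee membership in $\ell^2(\Go)$. Third, you make explicit a point the paper leaves tacit, namely that ``triangle in $\Go$'' coincides with ``cell of $\Go$'' (a vertex star); this is exactly where triangle-freeness of $\G$ enters, and as you note, for a cell graph $\G$ containing $3$-cycles the corollary as stated would need modification. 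What the paper's route buys is a converse that is a one-line computation --- which, after unwinding, is the same computation as yours, since summing the star identities at the two endpoints of an edge $x$ is precisely evaluating $S_1S_2u(x)=0$ --- and a forward direction that exhibits concretely how the generators $\psi_H$ interact with the cells.
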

\begin{proof}
The identity clearly holds for each $\psi_H$, hence for all compactly supported functions in $E_6$.  Conversely, every point $x$ in $\Go$ lies in exactly two triangles.  Summing the identity for those two triangles yields the 6-eigenvalue equation at the point $x$.  \end{proof}

The functions $\{\psi_{H_j}\}$ do not form a tight frame, and it seems unlikely that they even form a frame (the 
lower frame bound is doubtful), so they do not seem well suited for describing $\tilde{P_6}$.  We can, however, find an orthonormal basis of $E_6$ that consists of translates of a single function, but we pay the price that the function is not compactly supported.

We change notation to index the hexagons in Figure~\ref{fig4} 
by the lattice $[j,k]=j {0 \brace 1} + k {\frac{1}{2} \brace \frac{\sqrt{3}}{2}}$.  Note that hexagon $H_{[j,k]}$ has six neighbors $H_{[j',k']}$ for $$[j',k']=[j,k]+\{[1,0],[-1,0],[0,1],[0,-1],[1,-1],[-1,1]\}.  $$

To describe a function
\begin{equation}\label{6.2}
F=\Sum_{\mathbb{Z}^2} f([j,k]) \psi_{H_{[j,k]}} \end{equation} 
it suffices to give the discrete Fourier transform $\hat{f}(a,b)$ for $(a,b)\in [0,1]\times [0,1]$ given by
\begin{equation} \label{6.3}
\hat{f}(a,b)=\Sum_{\mathbb{Z}^2} f([j,k]) e^{-2\pi i (aj +bk)}, \end{equation}
for then
\begin{equation}\label{6.4}
f([j,k])=\int_0^1 \int_0^1 e^{2 \pi i (aj+bk)}\hat{f}(a,b) da db. \end{equation}
In fact we will construct $\hat{f}(a,b)$ directly, and then substitute this in \eqref{6.4} and then in \eqref{6.2} to obtain our function in $E_6$.

The basic observation is that each point in $\Go$ lies in exactly two neighboring hexagons, and the values of $\psi_H$ for those two hexagons will be $\pm 1$.  Thus
\begin{equation*}
<F,F>_{\ell^2(\Go)}=\sum |f([j,k])-f([j',k'])|^2 \end{equation*}
for $f$ of the form \eqref{6.2}, where the sum is over all neighboring pairs, and by polarization
\begin{equation}\label{6.5}
<F,G>_{\ell^2(\Go)}=\sum(f([j,k])-f([j',k']) (\overline{g([j,k])-g([j',k'])}) \end{equation}
if $F$ and $G$ are of the form \eqref{6.2}.  Now we substitute \eqref{6.4} in \eqref{6.5} to obtain
\begin{eqnarray}\label{6.6}%
&
<F,G>_{\ell^2(\Go)}=
\hspace{9cm} 
\\
& 
\displaystyle 
\int_0^1 \int_0^1 \Sum_{\mathbb{Z}^2} e^{2\pi i(aj+bk)}\hat{f}(a,b)
[6{-}e^{2 \pi i a}{-}e^{-2 \pi i a}{-}e^{2 \pi i b}{-}e^{-2\pi i b}{-}e^{2\pi i (a-b)}{-}e^{2\pi i (b-a)}] \overline{g([j,k])} da db 
\notag\end{eqnarray}
because of the form of the neighboring relation between $[j,k]$ and $[j',k']$.  But then we can evaluate the sum in \eqref{6.6} using \eqref{6.3} to obtain
\begin{equation}\label{e6.7}
<F,G>_{\ell^2(\Go)}=
\int_0^1\int_0^1 2
\left(3{-}\cos(2\pi a){-}\cos(2\pi b){-}\cos(2\pi(a-b))\right)\hat{f}(a,b)\overline{\hat{f}(a,b)} da db. \end{equation}
\begin{lemma}\label{lem6.5}
The functions $\tau_{p,q}F=\Sum_{\mathbb{Z}^2}f([j,k]+[p,q]) \psi_{H_{[j,k]}}$ form an orthonormal basis of $E_6$ for $[p,q]\in \mathbb{Z}^2$ if and only if
\begin{equation}\label{6.8}
|\hat{f}(a,b)|=\frac{1}{\sqrt{2\left(3-\cos(2\pi a)-\cos(2\pi b)-\cos(2\pi(a-b))\right)}}. \end{equation}
\end{lemma}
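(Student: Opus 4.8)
The plan is to pass to the Fourier (symbol) side already set up in \eqref{6.2}--\eqref{e6.7} and recognize the assertion as the classical characterization of when the integer translates of a single generator form an orthonormal basis of a weighted $L^2$ space. Write $w(a,b)=3-\cos(2\pi a)-\cos(2\pi b)-\cos(2\pi(a-b))$, so that \eqref{e6.7}, in its polarized form, reads $\langle F,G\rangle_{\ell^2(\Go)}=\int_0^1\int_0^1 2w\,\hat f\,\overline{\hat g}\,da\,db$ for $F,G$ of the form \eqref{6.2}. First I would record that, by the lemma preceding the Corollary above, $E_6$ is the closed linear span of the $\psi_H$, i.e. the closure of the set of $F$ whose coefficient sequence $f$ is finitely supported; for such $F$ the symbol $\hat f$ is a trigonometric polynomial, and since trigonometric polynomials are dense in $L^2$ of any finite measure on the torus, the map $F\mapsto\hat f$ extends to a unitary identification $E_6\cong L^2(\mathbb{T}^2,\,2w\,da\,db)$. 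Note $w>0$ a.e. (it vanishes only where $(a,b)\equiv(0,0)$), so this measure is equivalent to Lebesgue measure away from the origin; the non--integrability of $1/w$ at the origin is harmless, because membership of $F$ in $\ell^2(\Go)$ is governed by the finite weighted norm $\int 2w|\hat f|^2$, not by $\int|\hat f|^2$.

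Next I would translate the translation operator into the symbol picture. A direct reindexing of \eqref{6.2} shows that the coefficient sequence of $\tau_{p,q}F$ is $[j,k]\mapsto f([j,k]+[p,q])$, whose symbol is $e^{2\pi i(pa+qb)}\hat f(a,b)$; thus under the identification above $\tau_{p,q}$ becomes multiplication by $e^{2\pi i(pa+qb)}$. Inserting this into the polarized \eqref{e6.7} gives
$$\langle\tau_{p,q}F,\tau_{p',q'}F\rangle_{\ell^2(\Go)}=\int_0^1\int_0^1 2w(a,b)\,|\hat f(a,b)|^2\,e^{2\pi i((p-p')a+(q-q')b)}\,da\,db,$$
which is precisely the Fourier coefficient, indexed by $(p-p',q-q')$, of the nonnegative $L^1$ function $2w|\hat f|^2$.

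Consequently the family $\{\tau_{p,q}F\}$ is orthonormal if and only if every Fourier coefficient of $2w|\hat f|^2$ vanishes except the zeroth, which must equal $1$; by uniqueness of Fourier coefficients this holds if and only if $2w|\hat f|^2\equiv1$ a.e., i.e. if and only if \eqref{6.8}. This already yields the forward implication, since an orthonormal basis is in particular an orthonormal system. For the converse it remains to verify completeness, and here I would invoke the standard fact that for a finite positive measure $\mu$ on $\mathbb{T}^2$ and $g\in L^2(\mu)$, the modulates $\{e^{2\pi i(pa+qb)}g\}$ are total in $L^2(\mu)$ if and only if $g\neq0$ $\mu$-a.e. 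When \eqref{6.8} holds we have $|\hat f|^2=1/(2w)$, finite and nonzero a.e., so $\hat f\neq0$ a.e.\ and the modulates are total; orthonormal plus total gives an orthonormal basis of $E_6\cong L^2(\mathbb{T}^2,\,2w\,da\,db)$.

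The step requiring the most care is the unitary identification $E_6\cong L^2(\mathbb{T}^2,\,2w\,da\,db)$: one must justify both that the coefficient-to-function map is isometric (which is exactly \eqref{e6.7}) and that it is onto, the latter resting on the earlier lemma identifying $E_6$ as the closed span of the $\psi_H$ together with density of trigonometric polynomials. The only genuinely delicate feature is the second-order vanishing of $w$ at the origin, which forces the generator $\hat f$ to have a non--square-integrable symbol; once one recognizes that the ambient Hilbert space is the weighted $L^2$ rather than $\ell^2(\mathbb{Z}^2)$, every remaining step---including the totality criterion, which needs only $w>0$ a.e.---goes through routinely.
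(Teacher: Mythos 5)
Your proposal is correct, and its computational core coincides with the paper's own proof: translation becomes modulation on the symbol side (the paper's \eqref{6.9}), so the inner products $\langle \tau_{p,q}F,\tau_{p',q'}F\rangle_{\ell^2(\Go)}$ are Fourier coefficients of $2w|\hat f|^2$ (the paper's \eqref{6.10}, writing $w(a,b)=3-\cos(2\pi a)-\cos(2\pi b)-\cos(2\pi(a-b))$ as you do), and orthonormality of the translates is equivalent to $2w|\hat f|^2\equiv 1$, i.e.\ to \eqref{6.8}. Where you go beyond the paper is in the surrounding structure, and one of your additions fills a genuine gap: the paper's proof establishes only that $\{\tau_{p,q}F\}$ is an orthonormal \emph{system} iff \eqref{6.8} holds, and never addresses completeness, which is needed for the ``if'' direction of the lemma as stated (orthonormal \emph{basis}). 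Your argument closes this via the unitary identification $E_6\cong L^2(\mathbb{T}^2,2w\,da\,db)$ together with the totality criterion for modulates: if $h$ is orthogonal to every $e^{2\pi i(pa+qb)}\hat f$, then all Fourier--Stieltjes coefficients of the measure $\hat f\,\overline{h}\,d\mu$ vanish, so $\hat f\,\overline{h}=0$ a.e., and since $|\hat f|^2=1/(2w)$ is nonzero a.e.\ ($w$ vanishes only at the origin) this forces $h=0$. Two small precisions are worth recording. First, surjectivity of the identification $E_6\cong L^2(\mathbb{T}^2,2w\,da\,db)$ needs not only the compact-support lemma (compactly supported elements of $E_6$ are finite sums $\sum c_j\psi_{H_j}$) but also the density of compactly supported eigenfunctions in $E_6$, which is Remark~\ref{rem-K} (Kuchment's theorem for periodic graphs), so you should cite both. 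Second, well-definedness and injectivity of $F\mapsto\hat f$ on finite sums rest exactly on $w>0$ a.e., which you note. With those citations in place, your proof is a complete argument --- strictly more complete than the paper's, which proves the forward implication in full but only half of the converse.
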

\begin{proof}
We note that for $\tau_{p,q} f([j,k])=f([j,k]+[p,q])$ we have
\begin{equation}\label{6.9}
(\tau_{p,q}f)\hat{}(a,b)=e^{2\pi i (ap+bq)}\hat{f}(a,b)
\end{equation}
from \eqref{6.3}, so
\begin{eqnarray}\label{6.10}
&<F,\tau_{p,q}F>_{\ell^2(\Go)}= \hspace{9cm} \\
&\displaystyle\int_0^1\int_0^1 e^{-2 \pi i (ap+bq)}2(3{-}\cos(2\pi a){-}\cos(2\pi b){-}\cos(2\pi(a-b))) |\hat{f}(a,b)|^2 da db \notag
\end{eqnarray}
by \eqref{6.9} and \eqref{e6.7}.  But the right side of \eqref{6.10} is $\delta(p,q)$ if and only if 
$$2\left(3-\cos(2\pi a)-\cos(2\pi b)-\cos(2\pi(a-b))\right)|\hat{f}(a,b)|^2$$ is identically one, and this is equivalent to \eqref{6.8}
\end{proof}

We are free to choose any phase in \eqref{6.8}, but it is not clear what is to be gained, so we will simply choose $\hat{f}(a,b)$ to be positive.  Note that the only singularity of $\hat{f}$ is near $(0,0)$, where it behaves like $(a^2+b^2)^{-1/2}$, so it is an integrable singularity, but not square integrable.  Thus \eqref{6.4} is everywhere finite and decays like $O \left( (j^2 + k^2)^{-1/2}\right)$.  
Although $f$ is not in $\ell^2(\mathbb{Z}^2)$, we do have $F\in \ell^2(\Go)$.

\begin{theorem}\label{thm6.6}
Let
\begin{equation}\label{6.11}
\tilde{f}([j,k])=\int_0^1\int_0^1 \frac{e^{2\pi i (aj+bk)}}{\sqrt{2\left(3-\cos(2\pi a)-\cos(2\pi b)-\cos(2\pi(a-b))\right)}} da db. \end{equation}
Then $\left\{\Sum_{\mathbb{Z}^2}\tau_{p,q} \tilde{f}([j,k])\psi_{H_{[j,k]}}\right\}$ is an orthonormal basis of $E_6$, and 
\begin{equation} \label{6.12}
\tilde{P_6}F(x)=\Sum_{[p,q]\in\mathbb{Z}^2}\left(\Sum_{y\in\Go}\Sum_{[j,k]\in\mathbb{Z}^2} \tau_{p,q}\tilde{f}([j,k])\psi_{H_{[j,k]}}(y)F(y)\right) \Sum_{[j',k']\in\mathbb{Z}^2} \tau_{p,q}\tilde{f}([j',k']) \psi_{H_{[j',k']}}(x).
\end{equation}
\end{theorem}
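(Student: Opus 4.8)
The plan is to recognize that the theorem is essentially the packaging of Lemma~\ref{lem6.5} together with the standard expansion of an orthogonal projection in an orthonormal basis; the only genuine work is to confirm that the particular $\tilde f$ of \eqref{6.11} yields a \emph{complete} orthonormal system, not merely an orthonormal one.

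First I would verify that $\tilde f$ has the correct Fourier transform. Comparing \eqref{6.11} with the inversion formula \eqref{6.4}, one sees that \eqref{6.11} is precisely \eqref{6.4} applied to the function
$$\widehat{\tilde f}(a,b)=\frac{1}{\sqrt{2\left(3-\cos(2\pi a)-\cos(2\pi b)-\cos(2\pi(a-b))\right)}},$$
namely to the positive square root of the reciprocal weight. Because this weight is even under $(a,b)\mapsto(-a,-b)$, the choice is real, so $\tilde f$ is real-valued and $\widehat{\tilde f}$ satisfies \eqref{6.8}. By Lemma~\ref{lem6.5} the functions $\tau_{p,q}\tilde f$, assembled into $E_6$ via $e_{p,q}=\Sum_{\mathbb{Z}^2}\tau_{p,q}\tilde f([j,k])\psi_{H_{[j,k]}}$, are then orthonormal. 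That each $e_{p,q}$ genuinely lies in $\ell^2(\Go)$ (even though $\tilde f\notin\ell^2(\mathbb{Z}^2)$) follows from \eqref{e6.7}: with $|\widehat{\tilde f}|^2$ equal to the reciprocal of the weight, the integrand in \eqref{e6.7} is identically $1$, so $\|e_{p,q}\|^2_{\ell^2(\Go)}=1$.

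The substantive point is completeness, and here I would package the map $f\mapsto F=\Sum_{\mathbb{Z}^2}f([j,k])\psi_{H_{[j,k]}}$ as a unitary identification. Formula \eqref{e6.7} says that, after the discrete Fourier transform \eqref{6.3}, this map identifies the span of the $\psi_H$ with the weighted space $L^2\big([0,1]^2,\,w\,da\,db\big)$, where $w(a,b)=2(3-\cos(2\pi a)-\cos(2\pi b)-\cos(2\pi(a-b)))$; the closure of this span is all of $E_6$, since every compactly supported $6$-eigenfunction is a finite combination of the $\psi_H$ (the structural lemma above) and such functions are dense in $E_6$ on the periodic graph $\Go$ (Remark~\ref{rem-K}). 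Under \eqref{6.9}, $e_{p,q}$ corresponds to $w^{-1/2}e^{2\pi i(ap+bq)}$. Since $w>0$ almost everywhere (it vanishes only at the lattice points, a null set), the unitary $g\mapsto w^{1/2}g$ carries $L^2(w)$ onto $L^2([0,1]^2)$ and sends $w^{-1/2}e^{2\pi i(ap+bq)}$ to the ordinary exponential $e^{2\pi i(ap+bq)}$. As the exponentials form an orthonormal basis of $L^2([0,1]^2)$, their preimages $\{e_{p,q}\}$ form an orthonormal basis of $E_6$.

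Finally, \eqref{6.12} is just the resolution of the orthogonal projection onto a closed subspace in terms of one of its orthonormal bases: $\tilde P_6F=\Sum_{[p,q]}\langle F,e_{p,q}\rangle_{\ell^2(\Go)}\,e_{p,q}$, and expanding $\langle F,e_{p,q}\rangle=\Sum_{y\in\Go}\big(\Sum_{[j,k]}\tau_{p,q}\tilde f([j,k])\psi_{H_{[j,k]}}(y)\big)F(y)$ together with $e_{p,q}(x)$ reproduces \eqref{6.12} verbatim (the $e_{p,q}$ being real, no conjugation is needed). The main obstacle I anticipate is exactly the completeness step: one must check that the singularity of $\widehat{\tilde f}$ at the origin is only an $L^2(w)$-integrability phenomenon and obstructs neither the membership of $F$ in $\ell^2(\Go)$ nor the density argument, which is why the weighted-$L^2$ reformulation---which turns the vanishing of $w$ at $(0,0)$ into a harmless null set---is the cleanest route.
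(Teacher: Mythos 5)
Your proposal is correct, and its skeleton is the same as the paper's: the paper's entire proof of Theorem~\ref{thm6.6} is the single sentence ``This is an immediate consequence of Lemma \ref{lem6.5}.'' What you add, however, is real content. The paper's proof of Lemma~\ref{lem6.5} only verifies the orthonormality relations $\langle F,\tau_{p,q}F\rangle_{\ell^2(\Go)}=\delta(p,q)$; it never addresses completeness, so the ``basis'' half of ``orthonormal basis'' is left unproven in the paper. Your weighted-$L^2$ argument closes exactly that gap: you use the structural lemma (compactly supported elements of $E_6$ are finite sums of the $\psi_H$) together with Remark~\ref{rem-K} (density of compactly supported eigenfunctions on a periodic graph) to identify the closed span of $\{\psi_{H}\}$ with all of $E_6$, then read \eqref{e6.7} as saying that the discrete Fourier transform is a unitary from $L^2\bigl([0,1]^2, w\,da\,db\bigr)$ onto $E_6$ with $w(a,b)=2\bigl(3-\cos(2\pi a)-\cos(2\pi b)-\cos(2\pi(a-b))\bigr)$, and finally observe that multiplication by $w^{1/2}$ carries the system $w^{-1/2}e^{2\pi i(ap+bq)}$ (i.e.\ your $e_{p,q}$) onto the standard exponential basis of $L^2([0,1]^2)$, using that $w>0$ off a null set. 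Your remaining steps --- checking that the positive square root in \eqref{6.11} satisfies \eqref{6.8}, that $\tilde f$ and hence each $e_{p,q}$ is real (so no conjugation is needed in \eqref{6.12}), and that \eqref{6.12} is the standard expansion $\tilde P_6F=\sum_{p,q}\langle F,e_{p,q}\rangle e_{p,q}$ of a projection in an orthonormal basis --- match what the paper implicitly relies on. In short: same route, but your version is the one that actually proves the theorem as stated.
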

\begin{proof}
This is an immediate consequence of Lemma \ref{lem6.5}
\end{proof}


\section{Non-fractafold examples}\label{sec-frSiff}
Theorem \ref{thm-main} an be applied for examples that are not fractafolds. 
We assume that  $\Go=(V_0,E)$ is a finite or  infinite graph which is a union of complete graphs of  3 vertices (it can be said that $\Go$ is a 3-hyper-graph). In principle, we can allow $\Go$ to have  unbounded degrees, as well as loops and multiple edges, but in this section we will keep everything simple and  assume that $\Go$ is a regular graph.   As before, each of these complete 3-graphs we call a cell, or a 0-cell, of $\Go$. We denote the discrete \Lp\ on $\Go$ by $\D_\Go$. 
\begin{figure}[htb]\centering
\begin{picture}(120, 102)(57, 0)
\multiput(0, 0)(0,50.4){2}{\multiput(0, 0)(28.6,0){8}{\includegraphics[height=26pt,width=30pt]{s333-s.eps}}}
\multiput(-14.3, 25.2)(0,50.4){2}{\multiput(0, 0)(28.6,0){8}{\includegraphics[height=26pt,width=30pt]{s333-s.eps}}}
\end{picture}
\caption{A part of the  periodic triangular lattice \frSiff. This fractal field is not a fractafold. }\label{fig-t-lattice}
\end{figure}
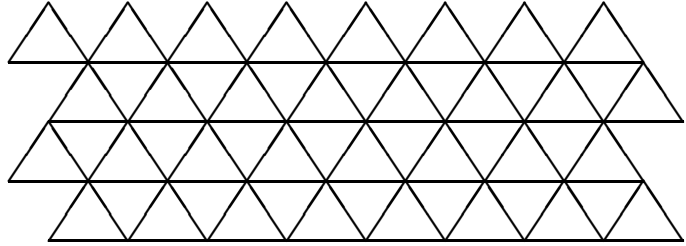
\begin{figure}[htb]\centering
\begin{picture}(120, 90)(57, 0)\thicklines
\multiput(0, 0)(0,45){2}{\multiput(0, 0)(30,0){8}{
\setlength{\unitlength}{1pt}
\put(0,0){\line(1,0){30}}
\put(0,0){\line(2,3){15}}
\put(30,0){\line(-2,3){15}}
}}
\multiput(-15, 22.5)(0,45){2}{\multiput(0, 0)(30,0){8}{
\setlength{\unitlength}{1pt}
\put(0,0){\line(1,0){30}}
\put(0,0){\line(2,3){15}}
\put(30,0){\line(-2,3){15}}
}}
\end{picture}
\caption{A part of the infinite  triangular lattice, the $\Go$ graph for the fractal field in Figure~\ref{fig-t-lattice}. }\label{fig-t-lattice-Go}
\end{figure}
We define a \emph{\frSiff}\ $\fF$ by replacing each cell of $\Go$ by a copy of ${SG}$. These copies we call  cells, or 0-cells, of $\fF$. 
Naturally, the corners of  the copies of the \Sig\ ${SG}$ are identified with the vertices of $\Go$. See \cite{HK2} for fractal fields, not necessarily finitely ramified. 
Since the pairwise intersections of the 
cells of $\fF$ are finite, we can consider the natural measure on $\fF$, which we also  denote $\mu$. Furthermore, since $\DS$ is a local operator, we can define a local  \Lp\ $\D$ on $\fF$, in the same way as explained in \cite{St03} (this means that the sum of normal derivatives is zero at every junction points). 
One can see that most of our results can be easily generalized for the \frSiff s. For instance, Theorem~\ref{thm-main} is essentially still valid. 
One change to be made is that on the graph $\G$ we have to consider the 
probabilistic \Lp\ (which is explained in \cite{MT03,Sh}), and multiply it by 4 to 
align with the normalization of the \Lp\ on the \Sig. 

In the example shown in Figure~\ref{fig-t-lattice-Go}, the spectrum  on $\Go$ is $[0,8]$ for the adjacency matrix \Lp, and  the spectrum is 
$[0,4/3]$ for the probabilistic \Lp. Thus $\Sigma_0=[0,\frac{16}3]$. In this particular case the spectrum is  absolutely continuous by the classical theory (see 
\cite[\arti]{Kuchment91, Kuchment93,  Kuchment05, KuchmentPost, KuchmentVainberg} 
for a sample of relevant recent results on periodic \Lp s). Combining the methods described in this paper, we obtain the following proposition (see also Figure~\ref{fig-frSiff-s}).

\begin{figure}[hbt]\centering
\def\q{\thicklines\qbezier(-2,-1)(0,0)(2,1)\qbezier(-2,1)(0,0)(2,-1)}
\def\qq{\put(0,0){\q}\put(0,2){\q}\put(0,4){\q}\put(0,6){\q}}
\def\qqq{\put(0,2){\qq}\put(0,15){\q}\put(0,20){\qq}}
\begin{picture}(400, 110)(0, 0)
\put(00,00){\linethickness{1.75pt}\line(0,1){80}}
\def\q{\setlength{\unitlength}{1pt}\thicklines\qbezier(-1,-3)(0,0)(1,3)\qbezier(-1,3)(0,0)(1,-3)}
\put(-7,0){\SMALL0}\put(-15,45){$\Sigma_0$}\put(-11,77.5){\SMALL$\frac{16}3$}\put(-7,90){\SMALL6}
\put(0,0){\vector(1,0){400}}\put(0,0){\vector(0,1){100}}
\multiput(0,90)(2.5,0){160}{\line(1,0){1}}
\multiput(90,0)(0,2.5){36}{\line(0,1){1}}\put(90,0){\q}
\put(150,0){\q}\put(200,0){\q}\put(380,0){\q}
\multiput(308,0)(0,2.5){36}{\line(0,1){1}}\put(308,0){\q}
\put(0,0){\setlength{\unitlength}{15pt}\qbezier(0,0)(5,12.5)(10,0)}
\put(200,0){\setlength{\unitlength}{15pt}\qbezier(0,0)(6,12.5)(12,0)}
\put(00,00){\linethickness{1.75pt}\line(1,0){52}}
\put(150,00){\linethickness{1.75pt}\line(-1,0){52}}
\put(200,00){\linethickness{1.75pt}\line(1,0){62}}
\put(380,00){\linethickness{1.75pt}\line(-1,0){62}}
\end{picture}
\caption{Computation of the spectrum on the  triangular lattice \frSiff.}\label{fig-frSiff-s}
\end{figure}


\BB{proposition}{thm-last}{The \Lp\ on the  periodic triangular lattice \frSiff\ consists of absolutely continuous spectrum and pure point spectrum. The absolutely continuous spectrum is $\fR^{-1}[0,\frac{16}3]$. The pure point spectrum consists of two infinite series of eigenvalues of infinite multiplicity. The series 
$5\fR^{-1}\{3\}\subsetneq\fR^{-1}\{6\}$ consists of isolated eigenvalues, and the series  $5\fR^{-1}\{5\}=\fR^{-1}\{0\}\backslash\{0\}$ is at the gap edges of the \mbox{a.c.} spectrum. The eigenfunction with compact support are complete in the \mbox{p.p.} spectrum. The spectral resolution is given by \eqref{e8}.}

It is straightforward to generalize such a result for other \frSiff s (see, in particular,  Remark~\ref{rem-K}).


\end{document}